\newcommand{\bmat}[1]{\begin{bmatrix}#1\end{bmatrix}} %
\newcommand{\inv}{^{-1}}
\newcommand{\norm}[1]{\|#1\|}
\newcommand{\jbb}[1]{{\color{black} #1}} 
\newcommand{\jbbo}[1]{{\color{black} #1}} 
\definecolor{forestgreen}{rgb}{0.13, 0.55, 0.13}
\newcommand{\jbbR}[1]{{\color{black} #1}} 
\newcommand{\jbbRo}[1]{{\color{black} #1}} 
\newcommand{\jbbRt}[1]{{\color{black} #1}} 
\newcommand{\jbrv}[1]{{\color{black} #1}} 
\crefname{hypothesis}{Hypothesis}{Hypotheses}
\title{PLSS: A Projected Linear Systems Solver%
\thanks{Version of \today. 
}}
\author{Johannes J. Brust%
   \thanks{Department of Mathematics,
   University of California San Diego, La Jolla, CA
   (\email{jjbrust@ucsd.edu}).}
\and Michael A. Saunders%
   \thanks{Department of Management Science and Engineering, Stanford University, Stanford, CA
   (\email{saunders@stanford.edu}).}
		}
\renewcommand{\b}[1]{\ensuremath{\mathbf{#1}}}
\newcommand{\bs}[1]{\ensuremath{{\boldsymbol{#1}}}}
\newcommand{\bko}[1]{\ensuremath{\mathbf{#1}_{k+1}}}
\newcommand{\bk}[1]{\ensuremath{\mathbf{#1}_k}}
\newcommand{\biidx}[2]{\ensuremath{{\mathbf{#1}}_{#2}}}
\newcommand{\btxiidx}[2]{\ensuremath{{\mathbf{#1}}_{\textnormal{#2}}}}
\newcommand{\bhiidx}[2]{\ensuremath{\widehat{{\mathbf{#1}}}_{#2}}} 
\newcommand{\biidxex}[3]{\ensuremath{{{\mathbf{#1}}^{\textnormal{#3}}_{#2}}}} 
\newcommand{\iidxex}[3]{\ensuremath{{{#1}^{\textnormal{#3}}_{#2}}}}
\newcommand{\To}{\!:\!}
\newcommand{\tp}{\ensuremath{^{\top}}}
\newcommand{\tpi}{\ensuremath{^{-\top}}}
\newcommand{\pseu}{\ensuremath{^{\dagger}}}
\newcommand{\A}{\mathbf{A}}
\newcommand{\B}{\mathbf{B}}
\newcommand{\I}{\mathbf{I}}
\newcommand{\Lbold}{\mathbf{L}}
\newcommand{\R}{\mathbf{R}}
\newcommand{\Sbold}{\mathbf{S}}
\newcommand{\Tbold}{\mathbf{T}}
\newcommand{\U}{\mathbf{U}}
\newcommand{\V}{\mathbf{V}}
\newcommand{\W}{\mathbf{W}}
\newcommand{\Y}{\mathbf{Y}}
\newcommand{\bbold}{\mathbf{b}}
\newcommand{\p}{\mathbf{p}}
\newcommand{\rb}{\mathbf{r}}
\newcommand{\s}{\mathbf{s}}
\newcommand{\x}{\mathbf{x}}
\newcommand{\y}{\mathbf{y}}
\newcommand{\z}{\mathbf{z}}
\newcommand{\e}{\mathbf{e}}
\newcommand{\Rk}{\R_k}
\newcommand{\Lk}{\Lbold_k}
\newcommand{\Sk}{\Sbold_k}
\newcommand{\Tk}{\Tbold_k}
\newcommand{\Yk}{\Y_k}
\newcommand{\Uk}{\U_k}
\newcommand{\Vk}{\V_k}
\newcommand{\pk}{\p_k}
\newcommand{\rk}{\rb_k}
\newcommand{\xk}{\x_k}
\newcommand{\yk}{\y_k}
\newcommand{\zk}{\z_k}
\newcommand{\ek}{\e_k}
\definecolor{darkgreen}{rgb}{.65, .5, 0}
\newcommand{\jjb}[1]{\textcolor{black}{#1}}
\begin{document}
	
	\maketitle
	
	\begin{abstract}
	We propose iterative \jbbR{projection} method\jbbRo{s} for solving square or rectangular consistent linear systems $\A\x = \bbold$.
	\jbbRo{Existing projection methods use sketching matrices (possibly randomized) to generate a sequence of small projected subproblems, but even the smaller systems can be costly. We develop a process that appends one column to the sketching matrix each iteration and converges in a finite number of iterations 
	whether the sketch is 
	random or deterministic. In general, our process generates orthogonal updates to the approximate solution $ \xk $. By choosing the sketch to be the set of all previous residuals, we obtain a simple recursive update and convergence in at most $\text{rank}(\A)$ iterations (in exact arithmetic). By choosing a sequence of identity columns for the sketch, we develop a generalization of the Kaczmarz method. In experiments on large sparse systems, our method (PLSS) \jbbRt{with residual sketches is competitive with
	LSQR and LSMR, and 
	with residual and identity sketches compares favorably with state-of-the-art randomized methods.}}
	\end{abstract}
	
	\begin{keywords}
		linear systems, iterative solver, randomized \jbbRt{numerical} linear algebra, projection method, LSQR, LSMR, \jbbRo{Kaczmarz method}, \jbbRt{Craig's method}
	\end{keywords}
	
	\begin{AMS}
		15A06, 15B52, 65F10, 68W20, 65Y20, 90C20
	\end{AMS}
	
	\section{Introduction}
	\label{sec:introduction}
	
	Consider a general linear system 
	\begin{equation} \label{eq:axb}
	   \A \x = \bbold,  \qquad \bbold \in \textrm{range}(\A),
	\end{equation}
	where $ \A \in \mathbb{R}^{m\times n} $, $ \x \in \mathbb{R}^n $ and $ \bbold \in \mathbb{R}^m $.
	For computations with large matrices, randomized methods  \cite{GowerRichtarik15,HMT11,RichtarikTakac20}
	aim to generate \jjb{inexpensive} (possibly low-accuracy) estimates of
	$\x$ by solving smaller projected systems. 
	When the elements of $\A$ are contaminated by errors or noise, \jbbRo{which} may be the
	case in data-driven problems, highly
	accurate solutions are not always requested. 
	For large systems, randomized solvers are growing in popularity, although it is not uncommon
	to observe slow convergence for naive implementations. \jbbRo{Reasons} for the widespread interest
	may be the arguably intuitive approach of solving a sequence of 
	small projected systems instead of \eqref{eq:axb}, \jbbRo{and the fact that randomization has emerged as an
	enabling technology in data science}.
	This article develops \jbbRo{a family of} projection methods 
	that solve a sequence of smaller systems
	\jbbRo{and can have} significant advantages in terms of computation, accuracy, and convergence.

    \subsection{Notation} \label{subsec:notation}
    \jbb{Integer $k \ge \jbbRt{1}$ represents the iteration index,
    and vector $\ek$ denotes the $k^{\text{th}}$ column of the identity matrix, with dimension
    depending on the context. \jbbR{The $ k^{\textnormal{th}} $ row of $ \b{A} $ is $ \bk{a}\tp = \ek\tp\b{A}  $}. For the $k^{\text{th}}$ solution estimate $\xk$, the residual vector is $\rk:=\bbold-\A\xk$, with associated vector $\yk:=\A\tp\rk$.
    Lower-case Greek letters represent scalars, and 
    the range of integers from $1$ to $k$ is written $1 \To k$.
    To prove convergence we make use of the economy SVD, $\A = \U\bs{\Sigma}\V\tp$.} 
	
	\subsection{Related work} \label{subsec:related_work}

	To handle large problems, there has been a growing interest in sketching techniques. 
	A \jjb{straightforward} approach is \emph{random sketching}, which consists of selecting a random matrix 
	$\Sbold \in \mathbb{R}^{m \times r}$ with $ r \ll m $ and solving the reduced linear system 
	\begin{equation} \label{eq:SaxSb}
	   \Sbold \tp \! \A \x = \Sbold \tp \bbold.
	\end{equation}
	By the Johnson-Lindenstrauss lemma \cite{JohnsonLindenstrauss84}, a solution to the reduced system \eqref{eq:SaxSb} is related to a solution
	of \eqref{eq:axb}. However, unless $\mathrm{range}(\Sbold)=\mathrm{range}(\A)$, the solutions will not be the \jjb{same}. 
	\jjb{Since \eqref{eq:SaxSb} involves the 
	product $\Sbold\tp \A$, previous work in \cite{ailon2009fast,Cartis2021,kane2014sparser} has focused on the choice of $\Sbold$ to reduce the computational cost of this product. 
	In particular, with certain  sketching matrices $\Sbold$, the randomized Kaczmarcz \cite{StrohmerVershynin09}, randomized coordinate descent \cite{leventhal2010randomized}, and stochastic
	Newton \cite{qu2016sdna} methods can be defined by \eqref{eq:SaxSb}.
	An overview of randomized iterative methods for solving
	linear systems is in \cite{GowerRichtarik15,RichtarikTakac20}, which we consider \jbbRo{to be} state-of-the-art for the purpose of this
	article.}
	
	
	
	
	\subsection{Motivation}
	\label{subsec:motivation}
	Given $\x_0 \in \mathbb{R}^n$,
	let estimates of the solution to \eqref{eq:axb} be defined by the iterative process
	\begin{equation}
	    \label{eq:xko}
	    \jbbR{\xk = \x_{k-1} + \jbbRt{\p_{k}}, \quad \quad k=1,2,\ldots,}
	\end{equation}
	where $ \jbbRt{\p_{k}} \in \mathbb{R}^n $ is called the \emph{update}. It is important to compute the update
	efficiently. Because sketching techniques aim to generate iterates with relatively low
	computational complexity, we choose to solve a sequence of sketched systems. In particular,
	we use a sequence of full-rank matrices 
	\begin{equation}
	\label{eq:seqS}
	\Sk \in \mathbb{R}^{m \times \jbbR{k}},\ k=\jbbR{1,2,}\dots, \qquad\textrm{rank}(\Sk) = \jbbR{k}.
	\end{equation}
	The matrices can be arbitrary as long as they have the specified dimensions and rank.
	We assume that $ \Sk $ is in the range of $\A$, though this is not strictly necessary.
	We show later that the
	iterates $\xk$ for solving \eqref{eq:axb} converge in a finite number of steps.
	We also demonstrate that a particular choice for $ \Sk $ results in very efficient
	updates.
	
	\jbbRo{Part of our scheme is an additional} \jbb{full-rank} parameter matrix $ \B\tp\B \in \mathbb{R}^{n \times n} $ that can
	possibly improve the numerical behavior of the methods. 
	(This matrix is not essential,
	and all results hold when $ \B = \b{I}_{n \times n} $.) Computations with $ \B\tp\B $
	are intended to be inexpensive, as they would be if it were a diagonal matrix. 
	The sequence of \jbbR{systems that define each update are of the form
	\begin{equation}
	    \label{eq:KKTSys}
	   \bmat{\B\tp \B & \A\tp \Sk
	      \\ \Sk\tp \A& \b{0}_{k \times k}}
          \bmat{\jbbRt{\p_{k}}\\ \jbbRt{\bs{\lambda}_{k}}}
    = \bmat{\A\tp \\ \Sk\tp}
      (\bbold-\A\x_{k-1})
	\end{equation}
	with $\jbbRt{\bs{\lambda}_k} \in \mathbb{R}^{k \times 1}$.
	Since only $\jbbRt{\p_{k}}$ is used to define the next iterates, 
	we do not compute $\bs{\lambda}_k$.
	When $\Sk$ is in the range of $\A$, solving \eqref{eq:KKTSys} is equivalent to the constrained least-squares problem
	\begin{align} 
	    \underset{ \p \in \mathbb{R}^n }{ \text{ arg min } } \quad & \frac{1}{2} \norm{\B\p}^2_2 
	   \label{eq:itProb}
	\\ 
	   \text{ subject to } \quad & \jbbR{\Sbold\tp_{k}}\A(\jbbR{\x_{k-1}} + \p) = \jbbR{\Sbold\tp_{k}} \bbold. 
	   \label{eq:cons}
	\end{align}
	Details of formulating problem \eqref{eq:itProb}--\eqref{eq:cons} from \eqref{eq:KKTSys} are in Appendix \ref{app:A}.}

	Here we summarize that the solution $ \jbbR{\jbbRt{\p_{k}}} $ to \eqref{eq:itProb}--\eqref{eq:cons} is defined by 
	$
	    \W := (\B\tp \B)\inv
	$ 
	and 
	\begin{equation}
	    \label{eq:pkLong}
	   \jbbR{\jbbRt{\p_{k}}} = \W\A\tp \jbbR{\Sbold_{k}} 
	    (\jbbR{\Sbold\tp_{k}} \A \W \A\tp \jbbR{\Sbold_{k}}  )\inv \jbbR{\Sbold\tp_{k}} (\bbold - \A\jbbR{\x_{k-1}}).
	\end{equation}
	If $ \Sk $ is not in the range of $ \A $, the inverse in \eqref{eq:pkLong}
	is replaced by the pseudo-inverse.
	For ease of notation we set $ \B\tp \B = \b{I} $ in the next sections (and hence $ \W = \b{I} $). Later we lift this assumption and describe updates with nontrivial $ \W $.
	
	Once $ \jbbR{\jbbRt{\p_{k}}} $ is obtained from \eqref{eq:pkLong}, it can define the next iterate via \eqref{eq:xko}.
	Note that popular methods in \cite{GowerRichtarik15,RichtarikTakac20} use an update like
	\eqref{eq:pkLong}; however, the matrix $ \Sbold \equiv \Sk $ is then typically randomly generated, and
	$ \A\tp \Sbold $, $ \Sbold \tp \A \A\tp \Sbold $ and $(\Sbold \tp \A \A\tp \Sbold)\inv$
	are typically recomputed each iteration. Thus, a small parameter $ r>0 $ must be selected
	such that a random $ \Sbold \in \mathbb{R}^{ m \times r } $ maintains low computational cost. Convergence is characterized by a rate $ \rho \in [0,1) $ that depends on the smallest singular value of a matrix defined by $ \Sbold $ and $\A$. Convergence implicitly depends on the choice of $r$ and may result in prohibitively many iterations when
	the rate $\rho$ is close to unity.
 	
 	\subsection{\jbbRo{Contributions}}
 	\label{subsec:contrib}

We prove that iteration \eqref{eq:xko}--\eqref{eq:pkLong} converges in $k_{\max}$
steps ($1 \le k_{\max} \le \min (m, n)$) when the sketch is in the range of $\A$ 
or the system is underdetermined. Each sketch $\Sk$ can be random
or deterministic, and can be augmented by one column each
iteration or recomputed from scratch. We show that the
finite-termination property holds for $m \ge n$ and also $m < n$. By
selecting previous residuals to form each $\Sk$, we
develop an iteration with orthogonal updates and residuals. This
process is simple to implement and only stores and updates five
vectors. By selecting columns of the identity matrix for each $\Sk$, we
develop an update that generalizes the Kaczmarz method \cite{Kaczmarz}. These
choices are two instances in our general class of methods
characterized by the choice of $\Sk$.

\section{Method}
\label{sec:method}

Our method solves a sequence of sketched systems
$\Sk\tp \A\x_k = \Sk\tp \bbold$ that exploit information generated at previous iterations.  Suppose that one sketching column $\s_k$ is generated each iteration and stored in the matrix
\begin{equation}
   \label{eq:genS}
   \Sk :=
   \bmat{\s_1 & \s_2 & \dots & \s_k}
   \in \mathbb{R}^{m \times \jbbR{k}}.
\end{equation}
Throughout, 
we assume that $ \Sk $ has full column rank.

	\subsection{Orthogonality}
		\label{subsec:orthog}
    \jbbRo{When constraints \eqref{eq:cons} are satisfied, each update
    has the property 
    \begin{equation}
    \label{eq:orthSr}
        \b{0} = \Sk\tp \A ( \jbbR{\x_{k-1} + \jbbRt{\p_{k}}} ) - \Sk\tp\bbold = -\Sk\tp \jbbR{\bk{r}}.
    \end{equation}
    Therefore from \eqref{eq:genS}, by construction, previous sketching columns are orthogonal to the next residual  (and hence linearly independent):
    $ \jbb{\b{s}_j} \perp \jbb{\b{r}_{i}} \textnormal{ for } \jbbRt{1} \le j \le \jbbRt{i}, 1 \le i \le \jbbRt{k} $. 
In section \ref{subsec:recpk} we show that the updates $\pk$ are also orthogonal.}

		
		
		

\subsection{Practical computations}
\label{subsec:practical}
\jbbRo{We develop general techniques to compute $ \jbbR{\p_{k}}$ \eqref{eq:pkLong}} efficiently. 
\jbbR{First, we describe a method based on updating a QR factorization.}
\jbbR{Second}, we deduce an \jbbR{alternative} method 
(based on updating a triangular factorization) to avoid recomputing $\jbbR{\Sbold\tp_{k}} \A$ and $(\jbbR{\Sbold\tp_{k}} \A \A\tp \jbbR{\Sbold_{k}} )\inv$. \jbbR{However, since both of these methods, \jbbRo{for general sketches}, have memory requirements
that grow with $k$}, we \jbbR{additionally} show \jbbRo{in Section \ref{subsec:recpk}} that $\p_{\jbbRt{k}}$ satisfies
a short recursion defined by $\p_{k-1}$ and another vector \jbbRo{when the sketch is chosen judiciously}. 

In order to avoid recomputing the potentially expensive product $\Sk\tp \A$, 
let $ \jbbR{\jbbRt{\y_{k}}} := \A\tp\jbbRo{\jbbRt{\s_{k}}} $ and define
$ \biidx{Y}{k-1} := [\jbbRt{\: \y_1 \: \y_2 \: \dots \: \y_{\jbbRt{k-1}}}] $ to collect the previous $\y$'s. Then
\begin{equation}
\label{eq:yk}
   (\Sk\tp\A)\tp = \A\tp \Sk =
   \bmat{\A\tp \biidx{S}{k-1} & \A\tp \jbbRo{\jbbRt{\s_k}}} =
   \bmat{\biidx{Y}{k-1} & \jbbR{\jbbRt{\y_{k}}}} = \bk{Y} \in \mathbb{R}^{n \times \jbbR{k}}.
\end{equation}

\subsection{\jbb{QR factorization}}
\label{sec:QR}
In general it is numerically safer to update factors of a matrix rather than its inverse (because factors exist even when the matrix is singular). QR
factors of $\Yk$ can be used to this effect. Specifically, let the QR factorization be 
\begin{equation}
    \label{eq:ykeqQR}
    \Yk=\bk{Q}\Tk, 
\end{equation}
where $\bk{Q}\in\mathbb{R}^{n\times \jbbR{k}}$ is orthonormal and $ \bk{T}\in \mathbb{R}^{\jbbR{k}\times \jbbR{k}} $ is upper triangular.
Note that the update $\p_{\jbbRt{k}}$ from \eqref{eq:pkLong} with $\rho_{\jbbR{k-1}}\equiv \jbbRo{\jbbRt{\s_{k}\tp} \rb_{\jbbR{k-1}}}$ 
simplifies to 
\begin{equation}
    \label{eq:pkQR}
    \p_{\jbbRt{k}}=\Yk(\Yk\tp\Yk)\inv\Sk\tp\rb_{\jbbR{k-1}}=\frac{\rho_{\jbbR{k-1}}}{\jbbRt{r_{kk}}}\bk{Q}\bk{e}=\frac{\rho_\jbbR{k-1}}{\jbbRt{\jbbRt{r_{kk}}}}\bk{q},
\end{equation}
where $\jbbRt{\jbbRt{r_{kk}}}$ is the final diagonal element in $\bk{T}$. Householder reflectors $\b{H}_{\jbbRt{k}}$ can be used to represent
$\bk{Q}=\biidx{H}{1}~\biidx{H}{2}~\cdots~\biidx{H}{\jbbR{k}}$ in factored form with essentially the same storage as $\Yk$ \cite{GVL96}.
Computing 
$\p_{\jbbRt{k}}$ in \eqref{eq:pkQR} requires one product with the factored form of $\bk{Q}$. In particular, $\b{q}_{\jbbR{k}}$ and $r_{\jbbR{k}\jbbR{k}}$ 
are obtained by
\begin{equation*}
    \bk{Q}\bk{e}=\b{q}_{\jbbR{k}} \quad \text{ and } \quad \bk{e}\tp(\bk{Q}\tp\b{y}_{\jbbRt{k}})= \b{q}_{\jbbR{k}}\tp\b{y}_{\jbbRt{k}}=r_{\jbbR{k}\jbbR{k}}.
\end{equation*}
We note that another option to develop the QR factorization \eqref{eq:ykeqQR} is to append $ \biidx{y}{k} $ to
$\biidx{T}{k-1}$ and apply a sequence of plane rotations to eliminate the nonzeros in $ (\biidx{y}{k})_{k+1:m} $
to define $\biidx{Q}{k}$ and $\biidx{T}{k}$.
Both QR strategies use storage that grow with $k$.


\subsection{Triangular factorization}
\label{subsec:triang}
\jbbR{In another approach, we update the inverse of $ \bk{Y}\tp \bk{Y} $ in \cref{eq:pkLong}}.
This is based on storing and updating the previous inverse in order to obtain the next.
Concretely, suppose we store 
$$ 
\biidx{N}{k-1} = (\biidx{Y}{k-1}\tp\biidx{Y}{k-1})\inv = (\Sbold\tp_{k-1} \A \A\tp \Sbold_{k-1})\inv
$$ 
and wish to compute $ \bk{N} $.

\begin{theorem}
	\label{thrm:simplified-inverse-full-history}
	Assume $\bk{Y}$ has full rank. Then
	\begin{equation}
	\label{eq:rdrt-decomposition-of-the-inverse}
	\bk{N} 
	=
	\bmat{\biidx{N}{k-1} + \frac{1}{ \delta_k} \bhiidx{\jbbRt{t}}{k}\bhiidx{\jbbRt{t}}{k}\tp& \frac{-1}{ \delta_{\jbbRt{k}}}\bhiidx{\jbbRt{t}}{k}
	\\ \frac{-1}{ \delta_{\jbbRt{k}}}\bhiidx{\jbbRt{t}}{k}\tp &
	   \frac{1}{\delta_{\jbbRt{k}}}}
	= \Rk \bk{D} \Rk\tp,
\end{equation}
where $ \Rk := [\: \biidx{\jbb{t}}{1}^{(k)} \: \biidx{\jbb{t}}{2}^{(k)} \: \cdots \: \biidx{\jbb{t}}{\jbbR{k}}^{(k)}]  $ is upper triangular, 
	$ \bk{D} := \textrm{diag}( 1/\delta_j )_{j=1,\dots,\jbbRt{k}}$, and 
\begin{equation*}
	   \delta_{j} := \y_j\tp \y_j - \y_j\tp \jbrv{\biidx{Y}{j-1}} \bhiidx{\jbb{t}}{\jbbR{j}}, \quad \biidx{\jbb{t}}{j}^{(k)} := 
	   \bmat{\bhiidx{\jbb{t}}{\jbbR{j}}
	      \\ -1
	      \\ \b{0}_{1:k-j}},
	\quad
	\text{ with }
	\quad
	   \bhiidx{\jbb{t}}{\jbbR{j}} := \jbrv{\biidx{N}{\jbbR{j-1}} \biidx{Y}{\jbbR{j-1}}\tp} \y_j.
\end{equation*}
\end{theorem}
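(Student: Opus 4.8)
The plan is to derive the first equality in \eqref{eq:rdrt-decomposition-of-the-inverse} from the $2\times 2$ block (Schur-complement) inversion formula, and then to obtain the factored form $\bk N = \Rk\bk D\Rk\tp$ by induction on $k$, letting a block recursion for $\Rk$ carry the bookkeeping.

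First I would partition
\[
   \bk{Y}\tp\bk{Y} = \bmat{\biidx{Y}{k-1}\tp\biidx{Y}{k-1} & \biidx{Y}{k-1}\tp\y_k \\[2pt] \y_k\tp\biidx{Y}{k-1} & \y_k\tp\y_k}.
\]
Since $\bk Y$ has full column rank, so does $\biidx Y{k-1}$; hence $\biidx Y{k-1}\tp\biidx Y{k-1}$ is invertible with inverse $\biidx N{k-1}$, and the Schur complement $\delta_k = \y_k\tp\y_k - \y_k\tp\biidx Y{k-1}\bhiidx t k$, with $\bhiidx t k = \biidx N{k-1}\biidx Y{k-1}\tp\y_k$ as in the statement, is a nonzero scalar --- indeed $\delta_k = \norm{\y_k - \biidx Y{k-1}\bhiidx t k}^2 > 0$ because $\y_k$ is not in the column space of $\biidx Y{k-1}$. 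Substituting $\bhiidx t k$ and $\delta_k$ into the standard block-inverse identity reproduces the first matrix in \eqref{eq:rdrt-decomposition-of-the-inverse} verbatim; it remains to identify that matrix with $\Rk\bk D\Rk\tp$.

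For the induction I would note that, from the definition of the columns $\biidx t j^{(k)}$, passing from $\biidx Y{k-1}$ to $\bk Y$ merely appends one trailing zero to each previous column (so $\biidx t j^{(k)}$ is $\biidx t j^{(k-1)}$ with an extra zero, for $j\le k-1$) and adjoins the new column $\biidx t k^{(k)} = \bmat{\bhiidx t k \\ -1}$. Therefore
\[
   \Rk = \bmat{\biidx R{k-1} & \bhiidx t k \\[2pt] \b 0\tp & -1}, \qquad \bk D = \bmat{\biidx D{k-1} & \b 0 \\[2pt] \b 0\tp & 1/\delta_k},
\]
and since each $\biidx t j^{(k)}$ has its last nonzero entry, $-1$, in row $j$, the matrix $\Rk$ is upper triangular, which proves that claim. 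Multiplying the blocks,
\[
   \Rk\bk D\Rk\tp = \bmat{\biidx R{k-1}\biidx D{k-1}\biidx R{k-1}\tp + \tfrac1{\delta_k}\bhiidx t k\bhiidx t k\tp & -\tfrac1{\delta_k}\bhiidx t k \\[4pt] -\tfrac1{\delta_k}\bhiidx t k\tp & \tfrac1{\delta_k}},
\]
and the induction hypothesis $\biidx R{k-1}\biidx D{k-1}\biidx R{k-1}\tp = \biidx N{k-1}$ turns the right-hand side into exactly the block matrix already identified with $\bk N$. The base case $k=1$ is immediate: $\biidx Y0$ is empty, so $\bhiidx t1 = \b 0$, $\delta_1 = \y_1\tp\y_1$, $\R_1 = [-1]$, $\bk D = [1/\delta_1]$, and $\R_1\bk D\R_1\tp = (\y_1\tp\y_1)\inv = \N_1$.

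I do not anticipate a substantial obstacle here; the result is essentially careful block linear algebra. The two points that need attention are (i) verifying $\delta_k \neq 0$ --- needed both for $\bk N$ to exist and for $\bk D$ to be defined --- which is precisely where the full-rank hypothesis on $\bk Y$ enters, and (ii) keeping the dimensions of the $\bhiidx t j$ and the zero-padding $\biidx t j^{(k-1)} \mapsto \biidx t j^{(k)}$ consistent so that the displayed block recursion for $\Rk$ is exactly correct.
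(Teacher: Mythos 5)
Your proposal is correct and follows essentially the same route as the paper: the first equality comes from the $2\times 2$ block (Schur-complement) inverse with $\bhiidx{t}{k}$ and $\delta_k$ defined exactly as in the statement, and your induction on $k$ is just the step-by-step form of the paper's unrolling of the recursion into a sum of rank-one terms $\frac{1}{\delta_j}\biidx{t}{j}^{(k)}(\biidx{t}{j}^{(k)})\tp$ collected into $\Rk\bk{D}\Rk\tp$. Your explicit observation that $\delta_k=\norm{\y_k-\biidx{Y}{k-1}\bhiidx{t}{k}}^2>0$ under the full-rank hypothesis is a small but welcome addition that the paper leaves implicit.
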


\begin{proof}
Observe that
\begin{equation} \label{eq:Nk}
  \bk{N} = (\biidx{Y}{k}\tp\biidx{Y}{k})\inv =
  \bmat{\biidx{Y}{k-1}\tp\biidx{Y}{k-1} &  \biidx{Y}{k-1}\tp \jbbR{\jbbRt{\y_{k}}}
     \\ \jbbR{\y\tp_{k}} \biidx{Y}{k-1} & (\jbbR{\y\tp_{k}} \jbbR{\jbbRt{\y_{k}}})\inv}\inv
     =
  \bmat{\biidx{N}{k-1}\inv &  \biidx{Y}{k-1}\tp \jbbR{\jbbRt{\y_{k}}}
     \\ \jbbR{\y\tp_{k}} \biidx{Y}{k-1} & (\jbbR{\y\tp_{k}} \jbbR{\jbbRt{\y_{k}}})\inv}\inv.
\end{equation}
Setting
	$  \bhiidx{\jbbRt{t}}{k} := \biidx{N}{k-1} \biidx{Y}{k-1}\tp \jbbR{\jbbRt{\y_{k}}} $ 
	and 
	$ \jbbR{\delta_k} := \jbbR{\y\tp_{k}} \jbbR{\jbbRt{\y_{k}}} - \jbbR{\y\tp_{k}} \biidx{Y}{k-1} \bhiidx{\jbbRt{t}}{k} $ 
and then explicitly forming the block inverse in \cref{eq:Nk}, we obtain
\begin{equation} \label{eq:Nkrec}
	\bk{N} =
	\bmat{
	\biidx{N}{k-1} + \frac{1}{ \jbbR{\delta_k}} \bhiidx{\jbbRt{t}}{k}\bhiidx{\jbbRt{t}}{k}\tp& \frac{-1}{ \jbbR{\delta_k}}\bhiidx{\jbbRt{t}}{k} \\ \frac{-1}{\jbbR{\delta_k}}
	   \bhiidx{\jbbRt{t}}{k}\tp & \frac{1}{\jbbR{\delta_k}}},
\end{equation}
which proves the first equality.
	
	Since all terms on the right side of \cref{eq:Nkrec} depend on $ \biidx{N}{k-1} $ only,
	this recursion can be used to form $ \bk{N} $ once $ \biidx{N}{k-1} $ (and $\biidx{Y}{k-1}$ and $\jbbR{\jbbRt{\y_{k}}}$) have been stored.
	The recursive process can be initialized with $ \biidx{N}{0} := (\biidx{y}{1}\tp \biidx{y}{1})\inv $.
	Moreover, with the vectors
	\begin{equation*}
	   \biidx{\jbb{t}}{j}^{(k)} := 
	   \bmat{\bhiidx{\jbb{t}}{\jbbR{j}}
	      \\ -1
	      \\ \b{0}_{1:\jbbR{k}-j}},
	   \quad j = 1 \To \jbbR{k},
	\end{equation*}
	recursion
	\cref{eq:Nkrec} can be expressed as a sum of rank-one updates.  In particular,
	\begin{align*}
	\bk{N} &= \bmat{\biidx{N}{k-1} &
	             \\                & 0} 
 + \frac{\biidx{\jbb{t}}{\jbbR{k}}^{(k)}(\biidx{\jbb{t}}{\jbbR{k}}^{(k)})\tp}{\delta_{\jbbRt{k}}}
 \\ &= \bmat{\bmat{\biidx{N}{k-2} \\ & 0}
          \\ & 0}
   + \frac{\biidx{\jbb{t}}{k-1}^{(k)}(\biidx{\jbb{t}}{k-1}^{(k)})\tp}
          {\delta_{k-1}}
   + \frac{\biidx{\jbb{t}}{\jbbR{k}}^{(k)}(\biidx{\jbb{t}}{\jbbR{k}}^{(k)})\tp}
          {\delta_{\jbbRt{k}}}
 \\	&=\bmat{\bmat{\bmat{\biidx{N}{0} \\ & \ddots}
	                 \\	                & 0}
         \\ 	& 0}
     + \frac{\biidx{\jbb{t}}{1}^{(k)}(\biidx{\jbb{t}}{1}^{(k)})\tp}
            {\delta_1}
     + \cdots + \frac{\biidx{\jbb{t}}{\jbbR{k}}^{(k)}(\biidx{\jbb{t}}{\jbbR{k}}^{(k)})\tp}
          {\delta_{\jbbRt{k}}}. 
	\end{align*}
	Defining the upper triangular matrix 
	$ \Rk = [\: \biidx{\jbb{t}}{1}^{(k)} \: \biidx{\jbb{t}}{2}^{(k)} \: \cdots \: \biidx{\jbb{t}}{\jbbR{k}}^{(k)}]  $
	and the diagonal matrix 
	$ \bk{D} = \text{diag}( 1/\delta_j )_{j=1:\jbbR{k}} $ then gives the factorized representation of $ \bk{N} $ in \eqref{eq:rdrt-decomposition-of-the-inverse}.
\end{proof}

This factorization of $ \bk{N} $ already improves computing $ \p_{\jbbRt{k}} $ from \eqref{eq:pkLong}.
Specifically, a formula that does not require any solves is described in the following corollary.


\begin{corollary}\label{co:update-alpha1}
	If $ \bk{N} $ is generated by the process in Theorem \ref{thrm:simplified-inverse-full-history}, then 
	\begin{equation}\label{eq:compute-pk-alpha1}
	\begin{aligned}
	\p_{\jbbRt{k}} &= \frac{\jbbRo{\jbbRt{\s_{k}}\tp \rb_{k-1}}}{\delta_{{\jbbRt{k}}}}\left( \y_{{\jbbRt{k}}} - \Y_{k-1}\bhiidx{\jbbRt{t}}{k}   \right) \\
	&= \frac{\jbbRo{\jbbRt{\s_{k}}\tp \rb_{k-1}}}{\delta_{{\jbbRt{k}}}}\left( \y_{{\jbbRt{k}}} - \Y_{k-1}\biidx{R}{k-1}\biidx{D}{k-1}\biidx{R}{k-1}\tp \biidx{Y}{k-1}\tp\y_{{\jbbRt{k}}}    \right).
	\end{aligned}
	\end{equation}
\end{corollary}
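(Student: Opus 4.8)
The plan is to start from the closed form of the update already derived in \eqref{eq:pkQR}, namely $\pk = \Yk(\Yk\tp\Yk)\inv\Sk\tp\rb_{k-1} = \Yk\bk{N}\Sk\tp\rb_{k-1}$ (equivalently \eqref{eq:pkLong} with $\W=\I$), and to simplify it using two ingredients: (i) the fact that the earlier sketching columns are orthogonal to the current residual, so that $\Sk\tp\rb_{k-1}$ collapses to a single nonzero entry, and (ii) the explicit block expression for $\bk{N}$ supplied by Theorem~\ref{thrm:simplified-inverse-full-history}. No new machinery is needed; the corollary is essentially a bookkeeping consequence of that theorem together with the orthogonality discussed in Section~\ref{subsec:orthog}.

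First I would invoke \eqref{eq:orthSr}: since the constraints \eqref{eq:cons} hold at step $k-1$, we have $\Sbold_{k-1}\tp\rb_{k-1}=\b{0}$, hence
\[
   \Sk\tp\rb_{k-1} = \bmat{\Sbold_{k-1}\tp\rb_{k-1} \\ \s_k\tp\rb_{k-1}} = (\s_k\tp\rb_{k-1})\,\bk{e},
\]
where $\bk{e}$ is the last column of $\I_k$. Therefore $\pk = (\s_k\tp\rb_{k-1})\,\Yk\bk{N}\bk{e}$, so the update is a scalar multiple of $\Yk$ times the last column of $\bk{N}$. Next, reading off that last column from \eqref{eq:rdrt-decomposition-of-the-inverse} gives $\bk{N}\bk{e}=\tfrac{1}{\delta_k}\bmat{-\bhiidx{t}{k}\\ 1}$, and partitioning $\Yk=\bmat{\biidx{Y}{k-1} & \y_k}$ conformally (as in \eqref{eq:yk}) yields
\[
   \Yk\bk{N}\bk{e} = \frac{1}{\delta_k}\bigl(\y_k - \biidx{Y}{k-1}\bhiidx{t}{k}\bigr),
\]
which is exactly the first equality in \eqref{eq:compute-pk-alpha1}. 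The second equality then follows immediately by substituting $\bhiidx{t}{k}=\biidx{N}{k-1}\biidx{Y}{k-1}\tp\y_k$ and the factorization $\biidx{N}{k-1}=\biidx{R}{k-1}\biidx{D}{k-1}\biidx{R}{k-1}\tp$, both provided by Theorem~\ref{thrm:simplified-inverse-full-history} at index $k-1$.

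There is no substantive obstacle; the only points requiring care are the two hypotheses that make every object well defined: the standing full-rank assumption on $\bk{Y}$ (so that $\bk{N}$ and all the $\biidx{N}{j}$, $\delta_j$ exist) and the orthogonality relation $\Sbold_{k-1}\tp\rb_{k-1}=\b{0}$, which rests on $\x_{k-1}$ satisfying its defining constraints and is precisely the content of \eqref{eq:orthSr}. Beyond checking these, the argument is a direct substitution.
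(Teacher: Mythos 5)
Your proposal is correct and follows essentially the same route as the paper: both start from $\pk=\Yk\bk{N}\Sk\tp\rb_{k-1}$, substitute the block form of $\bk{N}$ from Theorem~\ref{thrm:simplified-inverse-full-history}, and kill the terms involving $\Sbold_{k-1}\tp\rb_{k-1}$ via the orthogonality property \eqref{eq:orthSr}. The only cosmetic difference is that you collapse $\Sk\tp\rb_{k-1}$ to $(\s_k\tp\rb_{k-1})\bk{e}$ before multiplying, whereas the paper expands the full product first and then discards the vanishing terms.
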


\begin{proof}
	Substituting \eqref{eq:Nkrec} into \eqref{eq:pkLong} with the definition
	from \eqref{eq:yk} yields
	\begin{equation}\label{eq:compute-pk-alphaneq1}
	\begin{aligned}
	\b{p}_{\jbbRt{k}} =& \left(\Y_{k-1}\b{N}_{k-1} + \frac{1}{\delta_{{\jbbRt{k}}}}\Y_{k-1}\bhiidx{\jbbRt{t}}{k}\bhiidx{\jbbRt{t}}{k}\tp - \frac{1}{\delta_{{\jbbRt{k}}}}\y_{{\jbbRt{k}}}\bhiidx{\jbbRt{t}}{k}\tp  \right)\Sbold_{k-1}\tp\rb_{{\jbbR{k-1}}} \\
	&+ \frac{\jbbRo{\jbbRt{\s_{k}}\tp \rb_{k-1}}}{\delta_{{\jbbRt{k}}}}\left( \y_{{\jbbRt{k}}} - \Y_{k-1}\bhiidx{\jbbRt{t}}{k}   \right),
	\end{aligned}
	\end{equation}
	where $\Sbold_{k-1}\tp\rb_{{\jbbR{k-1}}}=\b{0}$ by the orthogonality \jbbRo{property.} 
\end{proof}

Formula \eqref{eq:compute-pk-alpha1} implies that we do not have to compute $ \bk{Y}\tp \bk{Y} $
 explicitly, nor do solves with it. Instead, the factors
$ \R_{{\jbbR{k-1}}} $ and $ \b{D}_{{\jbbR{k-1}}} $ can be updated \jbb{one column per iteration} by products with triangular
matrices only. 
\jbb{For instance, $ \R_{{\jbbR{k-1}}} $ is obtained by computing $ \biidx{t}{\jbbR{k-1}} $ (\jbbR{suppressing superscripts}) and appending: $ \R_{{\jbbR{k-1}}} = [ \: \biidx{R}{k-2} \quad \biidx{t}{\jbbR{k-1}} \: ] $.
(Since $\biidx{t}{\jbbR{k-1}} = [ \bhiidx{t}{k-1}\tp \: {-1} \: \b{0}\tp ]\tp $, it is obtained from 
2 multiplications with triangular matrices only: $ \bhiidx{t}{k-1} = \biidx{R}{k-2}(\biidx{D}{k-2} (\biidx{R}{k-2}\tp ( \biidx{Y}{k-2}\tp \y_{\jbbR{k}} )))  $.)}
However, \eqref{eq:compute-pk-alpha1} still needs 
$ \biidx{Y}{k-1} $, $ \biidx{R}{k-1} $ and $ \biidx{D}{k-1} $, which all grow with $k$.


\subsection{ \jbbRo{Orthogonality of $ \p_{\jbbRt{k}} $}}
\label{subsec:orthP}

It is valuable to note from \eqref{eq:compute-pk-alpha1} that
\begin{equation*}
    \p_{\jbbRt{k}} 
    = \frac{\jbbRo{\jbbRt{\s_{k}}\tp \rb_{k-1}}}{\delta_{\jbbRt{k}}}\left(\b{I} - \biidx{Y}{\jbbR{k-1}}(\biidx{Y}{\jbbR{k-1}}\tp \biidx{Y}{\jbbR{k-1}})\inv \Y_{\jbbR{k-1}}\tp  \right)\b{y}_{\jbbRt{k}},
\end{equation*}
so that $ \biidx{Y}{k-1}\tp \p_{\jbbRt{k}} = \b{0} $.
Defining 
$\biidx{P}{\jbbR{k-1}} := \bmat{~\biidx{p}{1} & \dots & \biidx{p}{k-1}~}$
and noting that
$ \biidx{p}{i} \in \textnormal{span}(\biidx{Y}{i}) $ for
$ i = 1 \To k-1$, we see that 
\begin{equation}
    \label{eq:orthogonal-updates}
    \biidx{P}{k-1}\tp \p_{\jbbRt{k}} = \b{0}.
\end{equation}
That is, the updates generated by our \jbbRo{class of methods} are orthogonal. We also define the 
squared lengths $ \theta_i = \norm{\biidx{p}{i}}_2^2 $  
($ i = 1 \To \jbbR{k} $) and the diagonal matrix 
\begin{align*}
\bs{\Theta}_{k-1} :=
\biidx{P}{k-1}\tp \biidx{P}{k-1} &= \text{diag}(\theta_1,\ldots,\theta_{k-1}).
\end{align*}

\subsection{\jbbRo{  Linear combination of $ \jbbRt{\p_{k}} $}}
\label{subsec:lincomb}
\jbbR{The methods in Sections \ref{subsec:triang} and \ref{sec:QR} use memory that grows with $k$.} By further unwinding recursive relations in \eqref{eq:compute-pk-alpha1}, 
described in Appendix \ref{app:B}, we can represent the update as a linear combination
of previous updates. \jbbRo{This enables us to derive in Section \ref{subsec:recpk}} \jbbR{a short recursion defined in terms of $\biidx{p}{k-1}$ and $\biidx{y}{k}$ only}. For some scalars
$ \alpha_{j} $, the dependencies become 
\begin{equation}
    \label{eq:pkLincomb}
    \p_{\jbbRt{k}} = \frac{\jbbRo{\jbbRt{\s_{k}}\tp \rb_{k-1}}}{\delta_{\jbbRt{k}}} \big( \sum_{j=1}^{\jbbR{k-1}} \alpha_{j} \biidx{p}{j} 
        \jbb{+}\b{y}_{\jbbR{k}} \big).
\end{equation}

Representation \eqref{eq:pkLincomb} implies that $ \jbbRt{\p_{k}} = \b{P}_{k-1} \b{g}_{k-1} + \gamma_{k-1} \jbbRt{\y_{k}} $ for
some vector $ \b{g}_{k-1} \in \mathbb{R}^{k-1} $ and scalar $ \gamma_{k-1} $. This leads to a computationally
efficient formula for $ \p_{\jbbRt{k}} $.


\section{Convergence}
 	\label{subsec:conv}
 	We first show finite termination for iterates generated by 
 	\eqref{eq:xko} and \eqref{eq:pkLong} using a sequence $\{\Sbold_i \in \mathbb{R}^{m \times \jbbR{i}}\}_{\jbbR{i=1}}^\jbbR{k}$ in the range of 
 	$\A$. \jbrv{This ensures that update \eqref{eq:pkLong} is well defined in terms of the inverse.} Denote by $(\cdot)\inv$ the inverse for square matrices
 	and by $(\cdot)^{\dagger}$ the pseudo-inverse for rectangular matrices.

\begin{theorem}
    \label{thrm:1}
	Assume that $m \geq n$ and $\A$ has full column rank. 
	Given $\x_0 \in \mathbb{R}^n$, consider the sequence $\{\xk \}$ computed by \eqref{eq:xko} and
	\eqref{eq:pkLong}, \jbrv{where the inverse in \eqref{eq:pkLong} is well defined}. 
	Also assume that after $\jbbR{n}$ iterations, $\jbbR{\Sbold_{n}}$ has full rank and is in the range of $ \A $. 
	Then, $\x_{n}$ solves \eqref{eq:axb}.
\end{theorem}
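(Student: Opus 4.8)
The plan is to show that the final residual $\rb_n := \bbold - \A\x_n$ is simultaneously orthogonal to $\mathrm{range}(\Sbold_n)$ and contained in $\mathrm{range}(\A)$, and then to use the rank/dimension hypotheses to conclude that these two subspaces coincide, which forces $\rb_n = \b{0}$ and hence $\A\x_n = \bbold$.

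First I would record the orthogonality property at step $n$. Substituting the closed form \eqref{eq:pkLong} into $\rb_k = \rb_{k-1} - \A\p_k$ and using that $(\Sbold_k\tp\A\W\A\tp\Sbold_k)(\Sbold_k\tp\A\W\A\tp\Sbold_k)\inv = \b{I}$ (legitimate since the inverse is assumed well defined) gives $\Sbold_k\tp\A\p_k = \Sbold_k\tp\rb_{k-1}$, whence $\Sbold_k\tp\rb_k = \b{0}$; this is exactly the property \eqref{eq:orthSr} already noted in Section~\ref{subsec:orthog}. Taking $k = n$ yields $\rb_n \perp \mathrm{range}(\Sbold_n)$. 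On the other hand, $\rb_n = \bbold - \A\x_n$ with $\bbold \in \mathrm{range}(\A)$ by \eqref{eq:axb} and $\A\x_n \in \mathrm{range}(\A)$ trivially, so $\rb_n \in \mathrm{range}(\A)$.

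The crux is then a dimension count: since $\A$ has full column rank and $m \ge n$, $\dim\mathrm{range}(\A) = n$; since $\Sbold_n$ has full rank $n$ with every column in $\mathrm{range}(\A)$, its $n$ independent columns form a basis of the $n$-dimensional space $\mathrm{range}(\A)$, so $\mathrm{range}(\Sbold_n) = \mathrm{range}(\A)$. Combining this with the previous paragraph, $\rb_n$ lies in $\mathrm{range}(\Sbold_n)\cap\mathrm{range}(\Sbold_n)^{\perp} = \{\b{0}\}$, so $\A\x_n = \bbold$, and full column rank of $\A$ makes $\x_n$ the unique solution of \eqref{eq:axb}. I expect the only delicate point to be this identification of $\mathrm{range}(\Sbold_n)$ with $\mathrm{range}(\A)$ — spelling out why ``full rank plus contained in $\mathrm{range}(\A)$'' upgrades to ``spans $\mathrm{range}(\A)$'' — which is precisely where the hypotheses $m \ge n$ and full column rank of $\A$ are consumed; everything else is a one-line consequence of the constraint \eqref{eq:cons}. (If $\rb_k = \b{0}$ already for some $k < n$, then \eqref{eq:pkLong} gives $\p_j = \b{0}$ for all $j > k$, so $\x_n = \x_k$ and the conclusion is unchanged.)
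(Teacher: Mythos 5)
Your proof is correct, but it takes a genuinely different route from the paper's. The paper computes the final update explicitly: it observes that $\Sbold_n\tp\A$ is square and nonsingular (full-rank $\Sbold_n$ in $\rangeb{A}$, full column rank $\A$), factors the inverse as $(\Sbold_n\tp\A\W\A\tp\Sbold_n)\inv=(\A\tp\Sbold_n)\inv\W\inv(\Sbold_n\tp\A)\inv$ so that $\p_n=(\Sbold_n\tp\A)\inv\Sbold_n\tp\rb_{n-1}$, and then uses the economy SVD together with the representation $\Sbold_n=\U\Tbold_n$ to identify this with $\A\pseu\rb_{n-1}$, giving the closed form $\x_n=\A\pseu\bbold$. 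You instead never compute $\p_n$: you combine the constraint-induced orthogonality $\Sbold_n\tp\rb_n=\b{0}$ with the observation that $\rb_n\in\rangeb{A}=\rangeb{\Sbold_n}$ (the dimension count being where $m\ge n$, full column rank, and ``full rank plus contained in $\rangeb{A}$'' are consumed) to force $\rb_n=\b{0}$. Your argument is more elementary --- no SVD, no pseudo-inverse algebra --- and it correctly isolates the one place the range hypothesis matters. What the paper's computation buys in exchange is the explicit identity $\x_n=\A\pseu\bbold$, which does not use consistency of the system and therefore also identifies $\x_n$ as the least-squares solution when $\bbold\notin\rangeb{A}$; your argument leans on $\bbold\in\rangeb{A}$ in the step $\rb_n\in\rangeb{A}$ and so proves exactly the stated claim but not that stronger characterization. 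Your closing remark about early termination ($\rb_k=\b{0}$ for $k<n$ propagating $\p_j=\b{0}$ forward) is a detail the paper leaves implicit; it is correct and harmless.
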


\begin{proof}
	After $k = \jbbR{n}$ iterations, 
	$\Sbold_k$ is by assumption a rank-$n$ matrix in the range of $\A$, so that $\Sbold_k \tp\A$ is a nonsingular square matrix. 
		Hence, $\left( \Sbold_k \tp\A \W \A\tp\Sbold_k \right)\inv=(\A \tp\Sbold_k)\inv \W\inv (\Sbold_k \tp \A)\inv$. Let $\jbbR{\rb_{k-1}} := \bbold - \A \jbbR{\x_{k-1}}$, so that
	\begin{equation*}
		\jbbR{\jbbRt{\p_{k}}}= \W \A\tp\Sbold_k \left( \Sbold_k\tp\A \W \A\tp\Sbold_k \right)\inv \Sbold_k\tp \jbbR{\rb_{k-1}} = 
		(\Sbold_k\tp\A)\inv \Sbold_k\tp\jbbR{\rb_{k-1}}. 
	\end{equation*}
	Let $\A = \U \bs{\Sigma} \V\tp$ be the economy SVD of $\A$.
	As $ \Sk $ is in the range of $ \A $, it can be represented by $ \Sk = \U\Tk $ for some nonsingular $\Tk$. Therefore,
	\begin{align*}
	    (\Sbold_k\tp\A)\inv \Sbold_k\tp\jbbR{\rb_{k-1}} &= (\Tk\tp\bs{\Sigma}\V\tp)\inv \Tk\tp\U\tp \jbbR{\rb_{k-1}}
	    = \A\pseu \jbbR{\rb_{k-1}},
	\\ \x_{n} = \x_{n-1} + \p_{n} &=  \x_{n-1} + \A\pseu(\jbb{\bbold - \A \x_{n-1}}) = \A\pseu\bbold.
	\end{align*}
	We conclude that $ \x_n $ is the least squares solution of \eqref{eq:axb}
	\jbb{when $m > n$, and the unique solution when $m =n$.}
\end{proof}

When not every member of $ \{ \Sbold_k \} $ is in the range 
of $ \A $, Corollary \ref{co:1} shows convergence in at most $m$ iterations.

\begin{corollary}
    \label{co:1}
    Suppose we use the same iterative process \jbbRo{from} Theorem \ref{thrm:1} except that
    each matrix in the sequence $ \{ \Sk \}_{\jbbR{k=1}}^{\jbbR{m-1}} $ has full rank only
    (and is not necessarily in the range of $ \A $), $ \jbbR{\Sbold_{m}} $
    is a square nonsingular matrix, \jbrv{and \eqref{eq:pkLong} is defined by the pseudo-inverse}. Then \jbbRo{either $ \x_m $ or $\x_l$, \jbrv{$n \le l < m$}, is a} 
    solution of \eqref{eq:axb}.
\end{corollary}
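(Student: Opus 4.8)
The plan is to reduce to Theorem \ref{thrm:1} by tracking the residual in the orthogonal complement of the current sketch range. Write $\rk = \bbold - \A\xk$ and recall from the orthogonality property \eqref{eq:orthSr} that $\Sk\tp\rk = \b{0}$ at every iteration; moreover, because one column is appended each step, $\Sbold_k \supseteq \Sbold_{k-1}$ in range, so the subspace $\rangeb{\Sk}$ is nested and strictly increasing (the columns $\s_j$ stay linearly independent by full rank). The key observation is that the update \eqref{eq:pkLong}, using the pseudo-inverse when $\Sk$ is not in $\rangeb{\A}$, still produces a genuine step satisfying the constraint $\Sk\tp\A\xk = \Sk\tp\bbold$, hence $\Sk\tp\rk = \b{0}$. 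I would argue this directly: $\p_k$ lies in $\rangeb{\W\A\tp\Sk}$ and $\Sk\tp\A\p_k = \Sk\tp\A\W\A\tp\Sk\,(\Sk\tp\A\W\A\tp\Sk)\pseu\,\Sk\tp\rb_{k-1} = \Sk\tp\rb_{k-1}$, since $\Sk\tp\rb_{k-1}$ lies in the range of the (symmetric positive semidefinite) matrix $\Sk\tp\A\W\A\tp\Sk$ — indeed $\Sk\tp\rb_{k-1} = \Sk\tp(\bbold-\A\x_{k-1})$ and $\bbold \in \rangeb{\A}$, so $\Sk\tp\rb_{k-1} \in \rangeb{\Sk\tp\A} = \rangeb{\Sk\tp\A\W\A\tp\Sk}$ using that $\W$ is positive definite. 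Thus the constraint is met exactly and $\Sk\tp\rk = \b{0}$.

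Now two cases arise. If at some iteration $l$ with $n \le l < m$ we already have $\rk = \b{0}$ — equivalently if $\rb_{l-1}$ happens to lie in $\rangeb{\Sbold_l}^{\perp}$'s complement in a way that forces the step to annihilate it, or more simply once $\rangeb{\Sbold_l}$ together with the accumulated steps has captured the relevant part of $\rangeb{\A}$ — then $\x_l$ solves \eqref{eq:axb} and we are done. (This is the "$\x_l$" alternative in the statement; I would phrase it as: if $\rb_{l-1}=\b{0}$ for some $l\le m$ the process has already converged.) Otherwise the residuals $\rb_0,\dots,\rb_{m-1}$ are all nonzero, and we run to $k=m$. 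At that point $\Sbold_m$ is square and nonsingular by hypothesis, so $\Sbold_m\tp\rb_{m-1} = \b{0}$ forces $\rb_{m-1} = \b{0}$ already — but to be safe we instead invoke the step formula directly: with $\Sbold_m$ square nonsingular, $(\Sbold_m\tp\A\W\A\tp\Sbold_m)\pseu$ coincides with the honest inverse only when $\A$ has full rank; in general $\A\tp\Sbold_m$ is $n\times m$ of rank $n$, so $\Sbold_m\tp\A\W\A\tp\Sbold_m$ is $m\times m$ of rank $n<m$ and we genuinely need the pseudo-inverse. Here I would compute, using $\Sbold_m = \U\Tbold_m + \U_\perp\Tbold_m'$ is not available since $\Sbold_m$ need not be in $\rangeb{\A}$; instead decompose $\Sbold_m\tp\rb_{m-1}$: since $\bbold\in\rangeb{\A}$, $\rb_{m-1}\in\rangeb{\A}$ as well (each step keeps the residual in $\rangeb{\A}$ because $\A\p_k\in\rangeb{\A}$), so write $\rb_{m-1} = \A\z$ for some $\z$, and then $\p_m = \W\A\tp\Sbold_m(\Sbold_m\tp\A\W\A\tp\Sbold_m)\pseu\Sbold_m\tp\A\z$. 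The matrix $\Sbold_m\tp\A$ has full row rank $n$ (as $\A$ has full column rank and $\Sbold_m$ is nonsingular), so $(\Sbold_m\tp\A\W\A\tp\Sbold_m)\pseu\Sbold_m\tp\A = \W\inv(\Sbold_m\tp\A)\pseu$ acting on $\rangeb{\Sbold_m\tp\A}$; substituting gives $\p_m = \A\pseu\A\z = \z$ (using full column rank of $\A$), hence $\x_m = \x_{m-1}+\z = \x_{m-1} + \A\pseu\rb_{m-1} = \A\pseu\bbold$, which solves \eqref{eq:axb}.

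The main obstacle I anticipate is the bookkeeping around the pseudo-inverse identity $(\Sbold_m\tp\A\W\A\tp\Sbold_m)\pseu\Sbold_m\tp\A\z = \W\inv(\Sbold_m\tp\A)\pseu\A\z$: this requires knowing that $\Sbold_m\tp\A\z$ lands in the range of $\Sbold_m\tp\A\W\A\tp\Sbold_m$ (true, since $\rangeb{\Sbold_m\tp\A\W\A\tp\Sbold_m} = \rangeb{\Sbold_m\tp\A}$ by positive-definiteness of $\W$) and then verifying the factorization of the pseudo-inverse on that subspace — a short but careful SVD argument on the $n\times m$ matrix $\A\tp\Sbold_m$. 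The secondary subtlety is pinning down exactly when the "$\x_l$" alternative is triggered versus "$\x_m$": the cleanest route is simply to say that if $\rb_{l-1} = \b{0}$ for some minimal $l$ then $\x_l$ (in fact $\x_{l-1}$) already solves the system, and otherwise the argument above applies at $k=m$; monotonicity of $\dim\rangeb{\Sbold_k}$ together with $\Sbold_k\tp\rk = \b{0}$ guarantees termination no later than $k=m$.
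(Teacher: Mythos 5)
Your argument reaches the right conclusion and is correct in substance, but it takes a different computational route from the paper at the decisive step $k=m$. The paper disposes of that case with the single identity $\Sk(\Sk\tp\A\W\A\tp\Sk)\pseu\Sk\tp=(\A\W\A\tp)\pseu$ and then reads off $\p_m=\W\A\tp(\A\W\A\tp)\pseu\rb_{m-1}=\A\pseu\rb_{m-1}$; you instead use consistency to write $\rb_{m-1}=\A\z$, push $\Sbold_m\tp\A\z$ through the pseudo-inverse of the rank-$n$ matrix $\Sbold_m\tp\A\W\A\tp\Sbold_m$, and obtain $\p_m=\z=\A\pseu\rb_{m-1}$ directly. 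Setting $\b{G}=\Sbold_m\tp\A\W^{1/2}$ (full column rank $n$), one has $(\b{G}\b{G}\tp)\pseu=\b{G}(\b{G}\tp\b{G})^{-2}\b{G}\tp$ and the product telescopes to $\z$, so your computation checks out. What your route buys is that it never needs the congruence identity for pseudo-inverses, which is false as a standalone matrix identity for non-orthogonal $\Sbold_m$ and is only valid in the paper because of the vectors it is sandwiched between; what the paper's route buys is brevity, plus an explicit structural condition (a partition of $\Sbold_l$ into $\U\Tbold_l$ and a nullspace block) under which the early-termination branch $\x_l$ actually occurs, whereas you reduce that branch to "if the residual happens to vanish early we are done." Since the corollary's conclusion is a disjunction that is already secured by the $k=m$ case, your looser treatment of the $\x_l$ alternative does not create a logical gap.

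Two blemishes to fix in a final write-up. First, the identity you state, $(\Sbold_m\tp\A\W\A\tp\Sbold_m)\pseu\Sbold_m\tp\A=\W\inv(\Sbold_m\tp\A)\pseu$, is dimensionally inconsistent ($m\times n$ on the left, $n\times m$ on the right); you flag it yourself as needing care, and the correct statement is the sandwiched one via $\b{G}$ above. Second, the parenthetical claim that $\Sbold_m\tp\rb_{m-1}=\b{0}$ would force $\rb_{m-1}=\b{0}$ is not available: the orthogonality property gives $\Sbold_{m-1}\tp\rb_{m-1}=\b{0}$ (the sketch is orthogonal to the residual \emph{after} its step), so only $m-1$ of the $m$ columns annihilate $\rb_{m-1}$, pinning it to a one-dimensional subspace rather than to zero. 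You correctly do not rely on it, but it should be deleted.
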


\begin{proof}
    At iteration $ k = \jbbR{m} $, the matrix $ \Sk\tp \A \W \A\tp \Sk \in \mathbb{R}^{m \times m} $ does not have full rank. Thus the update $ \pk $ in \eqref{eq:pkLong} is defined by the pseudo-inverse
    $ ( \Sbold_k\tp\A \W \A\tp\Sbold_k)\pseu $. \jbbRo{Since} $ \Sk $ is square and
    nonsingular, 
    \begin{equation*}
        \Sk( \Sbold_k\tp\A \W \A\tp\Sbold_k)\pseu\Sk\tp =
        (\A \W \A\tp)\pseu.
    \end{equation*}
    Let $\jbbR{\rb_{k-1} := \bbold - \A\jbbR{\x_{k-1}}}$ so that 
    $ \pk = \W \A\tp (\A \W \A\tp)\pseu \jbbR{\rb_{k-1}} =  \A\pseu \jbbR{\rb_{k-1}} $.
    Then 
    $$
      \x_{m} = \x_{m-1} + \p_{m} =  \x_{m-1} + \A\pseu(\jbb{\bbold - \A \x_{m-1}}) = \A\pseu\bbold.
    $$
    At an earlier iteration \jbrv{$n \le k = l < \jbbR{m}$}, if $\biidx{S}{k}$ can be
    partitioned by a square nonsingular matrix $ \biidx{T}{k} \in \mathbb{R}^{n \times n} $
    and a matrix $ \U_{\perp} \in \mathbb{R}^{m \times (\jbbR{k}-n)} $ in the nullspace of $ \jbbR{\A\tp} $ (i.e., $ \U\tp_{\perp} \U = \b{0} $ where
    $ \A = \U \bs{\Sigma} \V\tp $) so that
    \begin{equation*}
        \biidx{S}{k} = 
        \bmat{\U \biidx{T}{k} & \U_\perp},
    \end{equation*}
    then $ \biidx{x}{\jbbR{l}} $ is a solution to \eqref{eq:axb}. In particular,
    \begin{equation*}
      (\biidx{S}{k}\tp \A \W \A\tp \Sk)\pseu =
       \bmat{\Tk \tp\bs{\Sigma} \V\tp \W \V \bs{\Sigma} \Tk  & \b{0}
       \\ \b{0} & \b{0}}^{\dagger}.
    \end{equation*}
    The update is then
    \begin{equation*}
      \jbbR{\jbbRt{\p_{k}}} = \bmat{\A\pseu & \b{0}}
            \bmat{\b{I}_m \\ \b{0}} \jbbR{\rb_{k-1}},
    \end{equation*}
    and hence $\jbbR{\xk} = \jbbR{\x_{k-1}} + \jbbR{\jbbRt{\p_{k}}}$ will be a least-squares solution of \eqref{eq:axb}.
\end{proof}

\jbb{Corollary \ref{co:1} further implies that the process in \eqref{eq:xko} and
\eqref{eq:pkLong}, with appropriate sketching matrices,
also finds a solution when $ \A $ is underdetermined.}

\begin{corollary}
    \label{co:2}
    \jbb{If $ m <n $ so that $ \A \in \mathbb{R}^{m \times n} $ is underdetermined,
    and $\{ \Sk \}_{\jbbR{k=1}}^{\jbbR{m-1}} $ is a sequence of full-rank matrices with 
    $ \jbbR{\Sbold_{m}} $ nonsingular, then $ \x_m $ solves  \eqref{eq:axb}.}
\end{corollary}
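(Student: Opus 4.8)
The plan is to follow the argument used for Theorem~\ref{thrm:1} and Corollary~\ref{co:1}, specialized to the final iteration $k=m$, together with the standing consistency assumption $\bbold\in\textrm{range}(\A)$ built into \eqref{eq:axb}. The observation that drives everything is that once $\Sbold_m\in\mathbb{R}^{m\times m}$ is square and nonsingular, the sketched constraint \eqref{eq:cons} at that step, $\Sbold_m\tp\A(\x_{m-1}+\p_m)=\Sbold_m\tp\bbold$, is equivalent (multiply through by $\Sbold_m\tpi$) to $\A(\x_{m-1}+\p_m)=\bbold$, that is, to $\A\x_m=\bbold$. So it suffices to show that the update $\p_m$ produced by \eqref{eq:pkLong} actually satisfies constraint \eqref{eq:cons} at $k=m$.

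To that end I would first record the invariant that every residual lies in $\textrm{range}(\A)$: since $\bbold\in\textrm{range}(\A)$ we have $\rb_0=\bbold-\A\x_0\in\textrm{range}(\A)$, and inductively $\rb_k=\rb_{k-1}-\A\p_k\in\textrm{range}(\A)$ for every $k$. The intermediate updates $\p_1,\dots,\p_{m-1}$ are well defined because each $\Sk$ has full column rank (with the pseudo-inverse used in \eqref{eq:pkLong} wherever $\Sbold_k\tp\A\W\A\tp\Sbold_k$ happens to be rank-deficient). Then, at $k=m$, I would verify that $\p_m=\W\A\tp\Sbold_m(\Sbold_m\tp\A\W\A\tp\Sbold_m)\pseu\Sbold_m\tp\rb_{m-1}$ satisfies $\Sbold_m\tp\A\p_m=\Sbold_m\tp\rb_{m-1}$. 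This reduces to checking that $\Sbold_m\tp\rb_{m-1}\in\textrm{range}(\Sbold_m\tp\A\W\A\tp\Sbold_m)$, so that $(\cdot)\pseu$ behaves as a genuine inverse on that particular vector. Here $\W$ is symmetric positive definite, so $\textrm{range}(\A\W\A\tp)=\textrm{range}(\A)$; $\Sbold_m$ is nonsingular, so $\textrm{range}(\Sbold_m\tp\A\W\A\tp\Sbold_m)=\Sbold_m\tp\textrm{range}(\A)$; and $\rb_{m-1}\in\textrm{range}(\A)$ by the invariant, hence $\Sbold_m\tp\rb_{m-1}\in\Sbold_m\tp\textrm{range}(\A)$, as required. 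Consequently $\Sbold_m\tp\A(\x_{m-1}+\p_m)=\Sbold_m\tp\A\x_{m-1}+\Sbold_m\tp\rb_{m-1}=\Sbold_m\tp\bbold$, and nonsingularity of $\Sbold_m$ gives $\A\x_m=\bbold$, which is the claim. (If $\A$ additionally has full row rank, then $\A\W\A\tp$ is positive definite, $\Sbold_m\tp\A\W\A\tp\Sbold_m$ is invertible, and one may drop the pseudo-inverse altogether, recovering the $\x_m=\A\pseu\bbold$ reasoning of Corollary~\ref{co:1}.)

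The step I expect to be the main obstacle is this pseudo-inverse bookkeeping. For a general square nonsingular $\Sbold_m$ that is not orthogonal, the tempting shortcut $\Sbold_m(\Sbold_m\tp\A\W\A\tp\Sbold_m)\pseu\Sbold_m\tp=(\A\W\A\tp)\pseu$ is false (the Moore--Penrose inverse is not preserved under non-orthogonal congruence), so the proof cannot lean on a global matrix identity; it must argue only about how the relevant operator acts on $\textrm{range}(\A)$ --- precisely the subspace in which $\rb_{m-1}$ lives, thanks to consistency. Everything else (well-definedness of the intermediate updates, the range invariant, and reassembling $\x_m$) is routine and parallels the proof of Theorem~\ref{thrm:1}.
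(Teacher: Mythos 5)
Your proof is correct, and it reaches the conclusion by a genuinely different route at the one step that matters. The paper's own proof imports from Corollary~\ref{co:1} the congruence identity $\Sbold_m(\Sbold_m\tp\A\W\A\tp\Sbold_m)\pseu\Sbold_m\tp=(\A\W\A\tp)\pseu$ to collapse $\p_m$ to $\W\A\tp(\A\W\A\tp)\pseu\rb_{m-1}$, and then uses $\A\W\A\tp(\A\W\A\tp)\pseu\rb_{m-1}=\rb_{m-1}$ (valid because $\rb_{m-1}\in\textrm{range}(\A)=\textrm{range}(\A\W\A\tp)$). As you correctly observe, that congruence identity for the Moore--Penrose inverse fails for a general nonsingular, non-orthogonal $\Sbold_m$ when $\A\W\A\tp$ is singular, so the paper's intermediate formula for $\p_m$ is not justified as written. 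You sidestep this entirely: you never simplify $\p_m$, but instead verify the sketched constraint $\Sbold_m\tp\A\p_m=\Sbold_m\tp\rb_{m-1}$ directly, using only that $\Sbold_m\tp\rb_{m-1}$ lies in $\textrm{range}(\Sbold_m\tp\A\W\A\tp\Sbold_m)=\Sbold_m\tp\,\textrm{range}(\A)$ (which follows from consistency and $\W$ being positive definite), so that $MM\pseu$ acts as the identity on that vector; nonsingularity of $\Sbold_m$ then un-sketches the constraint to give $\A\x_m=\bbold$. Both arguments ultimately rest on the same residual invariant $\rb_{m-1}\in\textrm{range}(\A)$, but yours is the more careful one: it proves the same statement while repairing the pseudo-inverse bookkeeping that the paper glosses over, at the cost of a slightly longer range computation.
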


\begin{proof}
    \jbb{At iteration $ k = \jbbR{m} $, matrix $ \jbbR{\Sbold_{m}} $ is square and nonsingular so that
    the update is given by (\jbbRo{as} in Corollary \ref{co:1})
    \begin{equation*}
        \jbbR{\jbbRt{\p_{k}}} = \W \A\tp (\A \W \A\tp)\pseu \jbbR{\rb_{k-1}},
        \qquad \jbbR{\rb_{k-1}} = \bbold - \A \jbbR{\x_{k-1}}.
    \end{equation*}
    Therefore
    \begin{align*}
        \A \x_m   &= \A(\x_{m-1} + \p_{m} ) \\
                        &= \A\x_{m-1} + \A \W \A\tp (\A \W \A\tp)\pseu \b{r}_{m-1} \\
                        &= \A\x_{m-1} + (\bbold - \A\x_{m-1}),
    \end{align*}
    and we conclude that $ \A \x_m = \bbold $ and \jbbRo{that} $ \x_m $ solves \eqref{eq:axb}.}
\end{proof}

\section{PLSS residuals} 
\label{subsec:recpk}
	\jbbRo{By storing a history of residuals in the sketching matrix, we construct an iteration
	with orthogonal residuals and updates}. Recall that
	$\jbb{\rk := \bbold - \A \xk}$ 
	is the residual at iteration $k$ (where since $\bbold$ is in the range of $\A$,
	all residuals are, too). Suppose the previous residuals have been stored, i.e., $ \s_i = \rb_{i-1} $ ($ 1 \le i \le k $) so that the history of all residuals is in the matrix
	\begin{equation}
		\label{eq:S}
	\Sk := \left[\ \jbb{\b{r}_0 \quad \b{r}_1 \quad \cdots \quad \jbbR{\rb_{k-1}}} \ \right] \in \mathbb{R}^{m \times \jbbR{k}}.
	\end{equation}
    \jbbRo{Since $ \Sbold_{k} \tp \rb_k = \b{0} $, all residuals are orthogonal with this choice of sketch. Moreover, since all residuals are in the
    range of $\A$, by Theorem \ref{thrm:1}, iteration \eqref{eq:xko} converges in at most $\text{min}(m,n)$ iterations
    in exact arithmetic. Defining the scalar $ \rho_i = \norm{\rb_i}^2_2 $ ($ 0 \le i \le k-1 $) we develop a 1-step
    recursive update.}

\begin{theorem}
    \label{thrm:2}
	The update $\b{p}_{\jbbRt{k}}$ from \eqref{eq:pkLong} with $ \Sk $ from \eqref{eq:S} 
	can be computed by the 1-step recursive formula
	\begin{align}
	    \label{eq:pkShort}
	    \b{p}_{\jbbRt{k}} &= \beta_{{\jbbR{k-1}}}\biidx{p}{{\jbbR{k-1}}}+\gamma_{{\jbbR{k-1}}}\b{y}_{{\jbbRt{k}}},
	\\ \beta_{{\jbbR{k-1}}} &:= 
\frac{1}
       {\big(\frac{\norm{\biidx{p}{k-1}} \norm{\b{y}_{{\jbbRt{k}}}}}{\norm{\biidx{r}{k-1}}\norm{\biidx{r}{k-1}}} - 1\big)
        \big(\frac{\norm{\biidx{p}{k-1}} \norm{\b{y}_{{\jbbRt{k}}}}}{\norm{\biidx{r}{k-1}}\norm{\biidx{r}{k-1}}} + 1\big)},
  \\ \gamma_{\jbbR{k-1}} &:=
           \frac{1}{\norm{\biidx{y}{k}}^2 \big(1-\frac{\norm{\rb_{\jbbR{k-1}}}\norm{\rb_{\jbbR{k-1}}}}{\norm{\biidx{p}{k-1}}\norm{\biidx{y}{k}}}\big)
           \big(1+\frac{\norm{\rb_{\jbbR{k-1}}}\norm{\rb_{\jbbR{k-1}}}}{\norm{\biidx{p}{k-1}}\norm{\biidx{y}{k}}}\big)}.
	\end{align}
\end{theorem}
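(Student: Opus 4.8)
The goal is to reduce the update $\p_k = \Yk(\Yk\tp\Yk)\inv\Sk\tp\rb_{k-1}$ with the residual sketch \eqref{eq:S} to the two-term recursion \eqref{eq:pkShort}. The starting point is Corollary \ref{co:update-alpha1}, which already gives
$$
\p_k = \frac{\s_k\tp\rb_{k-1}}{\delta_k}\big(\y_k - \Y_{k-1}\bhiidx{t}{k}\big),
\qquad \bhiidx{t}{k} = \biidx{N}{k-1}\Y_{k-1}\tp\y_k,
$$
and, since $\s_k = \rb_{k-1}$, the scalar numerator is $\s_k\tp\rb_{k-1} = \norm{\rb_{k-1}}^2 = \rho_{k-1}$. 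So the work is to show that $\Y_{k-1}\bhiidx{t}{k} = \Y_{k-1}(\Y_{k-1}\tp\Y_{k-1})\inv\Y_{k-1}\tp\y_k$ is a scalar multiple of $\biidx{p}{k-1}$, and then to evaluate $\delta_k$ and that scalar in the claimed closed form.

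First I would establish the key geometric fact: $\Y_{k-1}(\Y_{k-1}\tp\Y_{k-1})\inv\Y_{k-1}\tp$ is the orthogonal projector onto $\rangeb{\Y_{k-1}} = \spanb{\A\tp\rb_0,\ldots,\A\tp\rb_{k-2}} = \spanb{\y_1,\ldots,\y_{k-1}}$. From \eqref{eq:orthogonal-updates} and the observation $\biidx{p}{i}\in\spanb{\Y_i}$, the updates $\biidx{p}{1},\ldots,\biidx{p}{k-1}$ form an orthogonal basis of this same space, so the projector is $\sum_{i=1}^{k-1}\biidx{p}{i}\biidx{p}{i}\tp/\theta_i$. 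Hence $\Y_{k-1}\bhiidx{t}{k} = \sum_{i=1}^{k-1}(\biidx{p}{i}\tp\y_k/\theta_i)\,\biidx{p}{i}$. The crucial reduction is that $\biidx{p}{i}\tp\y_k = 0$ for $i \le k-2$: this follows because $\y_k = \A\tp\rb_{k-1}$ while $\biidx{p}{i} = \xk[i]-\xk[i-1]$, so $\biidx{p}{i}\tp\y_k = (\A\biidx{p}{i})\tp\rb_{k-1} = (\rb_{i-1}-\rb_i)\tp\rb_{k-1}$, and both $\rb_{i-1}$ and $\rb_i$ lie among the sketch columns $\Sk\tp$ that are annihilated by $\rb_{k-1}$ (orthogonality of residuals, Section \ref{subsec:recpk}). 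Thus only the $i=k-1$ term survives, giving $\Y_{k-1}\bhiidx{t}{k} = \dfrac{\biidx{p}{k-1}\tp\y_k}{\theta_{k-1}}\biidx{p}{k-1}$, and therefore
$$
\p_k = \frac{\rho_{k-1}}{\delta_k}\,\y_k \;-\; \frac{\rho_{k-1}}{\delta_k}\cdot\frac{\biidx{p}{k-1}\tp\y_k}{\theta_{k-1}}\,\biidx{p}{k-1},
$$
which is already of the form $\beta_{k-1}\biidx{p}{k-1} + \gamma_{k-1}\y_k$.

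Next I would put the coefficients into the stated form. We have $\gamma_{k-1} = \rho_{k-1}/\delta_k$ and $\beta_{k-1} = -\gamma_{k-1}\,(\biidx{p}{k-1}\tp\y_k)/\theta_{k-1}$. Two scalar identities finish the job. For $\delta_k$: from Theorem \ref{thrm:simplified-inverse-full-history}, $\delta_k = \y_k\tp\y_k - \y_k\tp\Y_{k-1}\bhiidx{t}{k} = \norm{\y_k}^2 - (\biidx{p}{k-1}\tp\y_k)^2/\theta_{k-1}$ using the previous paragraph. For the cross term $\biidx{p}{k-1}\tp\y_k$: using $\biidx{p}{k-1} = \gamma_{k-2}\y_{k-1} + \beta_{k-2}\biidx{p}{k-2}$ recursively, or more directly $\A\biidx{p}{k-1} = \rb_{k-2}-\rb_{k-1}$ and $\biidx{p}{k-1}\tp\y_k = (\rb_{k-2}-\rb_{k-1})\tp\rb_{k-1} = -\norm{\rb_{k-1}}^2 = -\rho_{k-1}$, since $\rb_{k-2}\perp\rb_{k-1}$. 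This is the clean identity that makes everything collapse: $\biidx{p}{k-1}\tp\y_k = -\rho_{k-1}$. Substituting, $\delta_k = \norm{\y_k}^2 - \rho_{k-1}^2/\theta_{k-1}$, and one also needs $\theta_{k-1} = \norm{\biidx{p}{k-1}}^2$; combining and factoring each denominator as a difference of squares $(a-1)(a+1) = a^2-1$ yields exactly the displayed $\beta_{k-1}$ and $\gamma_{k-1}$ (after dividing numerator and denominator appropriately by $\rho_{k-1}^2$ or $\norm{\y_k}^2\theta_{k-1}$).

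The main obstacle is the bookkeeping in the second step — verifying that $\biidx{p}{i}\tp\y_k = 0$ for all $i\le k-2$ and that the $\biidx{p}{i}$ genuinely span $\rangeb{\Y_{k-1}}$, since this is what turns the full-history projector into a single rank-one term. Once that telescoping is pinned down (it rests entirely on orthogonality of the residuals, already available from $\Sk\tp\rb_k = \b{0}$), the remaining work is routine algebra: substituting $\biidx{p}{k-1}\tp\y_k = -\rho_{k-1}$, $\theta_{k-1} = \norm{\biidx{p}{k-1}}^2$, $\rho_{k-1} = \norm{\rb_{k-1}}^2$ into $\beta_{k-1} = \rho_{k-1}^2/(\theta_{k-1}\delta_k)$ and $\gamma_{k-1} = \rho_{k-1}/\delta_k$ and rearranging each into the product-of-two-factors shape stated in the theorem.
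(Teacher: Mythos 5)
Your proposal is correct and follows essentially the same route as the paper: both arguments rest on the orthogonality of the residuals and of the updates, on the vanishing of $\y_{k}\tp\biidx{p}{i}$ for $i \le k-2$, and on the key identity $\y_{k}\tp\biidx{p}{k-1} = -\rho_{k-1}$. The only organizational difference is that you evaluate $\delta_k = \norm{\y_k}^2 - \rho_{k-1}^2/\theta_{k-1}$ explicitly from Theorem~\ref{thrm:simplified-inverse-full-history} and Corollary~\ref{co:update-alpha1}, whereas the paper keeps $\gamma_{k-1}$ as an unknown in the ansatz $\p_k = \biidx{P}{k-1}\biidx{g}{k-1} + \gamma_{k-1}\y_k$ and determines it from the normalization $\y_k\tp\p_k = \rho_{k-1}$; both yield the same coefficients.
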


\begin{proof}
	From \eqref{eq:pkLincomb}, $ \p_{\jbbRt{k}} $ can be represented as a linear combination
	of the columns in $ \biidx{P}{k-1} $ and $ \b{y}_{\jbbR{k}} $. Thus for a vector
	$ \biidx{g}{k-1} \in \mathbb{R}^{k-1} $ \jbb{and scalar $ \gamma_{k-1} $} we have
	\begin{equation}
	    \label{eq:pkPkm1}
	    \p_{\jbbRt{k}} = \biidx{P}{k-1} \biidx{g}{k-1} + \gamma_{k-1} \b{y}_{\jbbR{k}}. 
	\end{equation}
	Since $ \biidx{P}{k-1}\tp \p_{\jbbRt{k}} = \b{0} $, by multiplying
	\eqref{eq:pkPkm1} left and right by $ \biidx{P}{k-1}\tp $
	and solving with the diagonal $ \bs{\Theta}_{\jbbR{k-1}} $ we obtain
	 $\b{g}_{\jbbR{k-1}} = - \gamma_{\jbbR{k-1}} \bs{\Theta}_{\jbbR{k-1}}\inv \biidx{P}{k-1}\tp \b{y}_{\jbbR{k}}$.
	To simplify $ \biidx{P}{k-1}\tp \b{y}_{\jbbR{k}} $, recall that $ \b{\jbb{r}}_{\jbbR{k-1}} = \bbold - \A\x_{\jbbR{k-1}} $ and $ \x_{\jbbR{k-1}} = \biidx{x}{k-2} + \biidx{p}{k-1} $, so	that 
	\begin{equation}
	    \label{eq:recRes}
	    \rb_{\jbbR{k-1}} = \biidx{r}{\jbbR{k-2}} - \A \biidx{p}{\jbbRt{k-1}}.
	\end{equation}
	Because residuals are orthogonal, multiplying \eqref{eq:recRes} on the left by
	$ \Sk\tp $ gives
	\begin{equation*}
	    \rho_{\jbbR{k-1}} \biidx{e}{k} = \rho_{k-2} \biidx{e}{k-1} - \bk{Y}\tp \biidx{p}{k-1}
	    \quad \text{ and } \quad \rho_{k-1} = -\b{y}_{\jbbR{k}}\tp \biidx{p}{k-1}.
	\end{equation*}
	From \eqref{eq:recRes} we also have
	\begin{equation*}
	    \rb_{\jbbR{k-1}} = \biidx{r}{k-3} - \A \biidx{p}{k-2} - \A \biidx{p}{k-1}, 
	    \quad 
	    \rho_{k-1} \biidx{e}{k} = \rho_{k-3} \biidx{e}{k-2} - \bk{Y}\tp \biidx{p}{k-2} - \bk{Y}\tp \biidx{p}{k-1},
	\end{equation*}
	and $ \b{y}_{\jbbRt{k}}\tp \biidx{p}{k-1} = -\rho_{\jbbR{k-1}} $ implies that 
	$ \b{y}_{\jbbRt{k}}\tp \biidx{p}{k-2} = 0 $. Similarly, $ \b{y}_{\jbbRt{k}}\tp \biidx{p}{k-i} = 0  $ for
	$ i =3 \To k-1 $. Therefore, $ \biidx{P}{k-1}\tp \b{y}_{\jbbRt{k}} = -\rho_{\jbbR{k-1}} \b{e}_{\jbbR{k-1}} $ and
	    $\biidx{g}{k-1} = (\gamma_{\jbbR{k-1}}\rho_{\jbbR{k-1}}/\theta_{k-1}) \b{e}_{\jbbR{k-1}}$.
	From \eqref{eq:pkLincomb},
	\begin{equation}
	    \label{eq:pkIntermed}
	    \p_{\jbbRt{k}} =  \frac{\gamma_{\jbbR{k-1}}\rho_{\jbbR{k-1}}}{\theta_{k-1}} \biidx{p}{k-1} + \gamma_{\jbbR{k-1}} \y_{\jbbR{k}},
	\end{equation}
	and from \eqref{eq:pkLong} we have $ \b{y}_{\jbbRt{k}}\tp \p_{\jbbRt{k}} = \Sk\tp \rb_{\jbbR{k-1}} =  \rho_{\jbbR{k-1}} $. Thus, multiplying
	\eqref{eq:pkIntermed} by $ \b{y}_{\jbbRt{k}}\tp $ gives 
	\begin{equation*}
	    \rho_{\jbbR{k-1}} = - \frac{\gamma_{\jbbR{k-1}} \rho_{\jbbR{k-1}}^2}{\theta_{k-1}} + \gamma_{\jbbR{k-1}} \norm{\b{y}_{\jbbRt{k}}}^2.
	\end{equation*}
	Solving the previous expression for $ \gamma_{\jbbR{k-1}} $ we find
\begin{align*}
   \gamma_{\jbbR{k-1}} &= \frac{\theta_{k-1}\rho_{\jbbR{k-1}}}{\theta_{k-1}\norm{\b{y}_{\jbbRt{k}}}^2-\rho_{\jbbR{k-1}}^2} 
             = \frac{1}
          {\norm{\rb_{\jbbR{k-1}}}^2 \big(\frac{\norm{\b{y}_{\jbbRt{k}}}}{\norm{\rb_{\jbbR{k-1}}}}
           -\frac{\norm{\rb_{\jbbR{k-1}}}}{\norm{\biidx{p}{k-1}}}\big)
           \big(\frac{\norm{\b{y}_{\jbbRt{k}}}}{\norm{\rb_{\jbbR{k-1}}}}
           +\frac{\norm{\rb_{\jbbR{k-1}}}}{\norm{\biidx{p}{k-1}}}\big)},
\\ \beta_{\jbbR{k-1}} &= \frac{\rho_{\jbbR{k-1}}}{\theta_{k-1}} \gamma_{\jbbR{k-1}}
            = \frac{\norm{\rb_{\jbbR{k-1}}}^4}
  {(\norm{\biidx{p}{k-1}}\norm{\biidx{y}{\jbbR{k}}} - \norm{\biidx{r}{\jbbR{k-1}}}^2)
   (\norm{\biidx{p}{k-1}}\norm{\biidx{y}{\jbbR{k}}} + \norm{\biidx{r}{\jbbR{k-1}}}^2)}.
	\end{align*}
   Therefore we specify 
   $\p_{\jbbRt{k}} = \beta_{\jbbR{k-1}} \biidx{p}{k-1} + \gamma_{\jbbR{k-1}} \b{y}_{\jbbRt{k}}$.
\end{proof}

\subsection{$\W$ not the identity}
\label{subsec:W}
\jbbR{Lifting the assumption from Section \ref{subsec:conv}, suppose that} \jbb{ $\W=(\b{B}\tp\b{B})\inv$. In this case, we may represent it in triangular factorized form as $ \W = \btxiidx{R}{W}\tp \btxiidx{R}{W}$
(because $\W$ is symmetric and nonsingular). Further, define the transformed quantities
\begin{equation*}
    \biidxex{x}{k-1}{W} := \btxiidx{R}{W}\tpi \x_{\jbbR{k-1}}, \quad \biidxex{p}{k}{W} := \btxiidx{R}{W}\tpi \p_{\jbbRt{k}},
    \quad \text{ and } \quad \btxiidx{A}{W} := \A\btxiidx{R}{W}\tp.
\end{equation*}
Then update \eqref{eq:pkLong} becomes 
\begin{equation*}
    \biidxex{p}{k}{W} = \btxiidx{A}{W}\tp \Sk (\Sk\tp\btxiidx{A}{W} \btxiidx{A}{W}\tp \Sk )^{-1}
    \Sk\tp \rb_{\jbbR{k-1}}.
\end{equation*}
As for $ \p_{\jbbRt{k}} $, there is a recursion
\begin{equation}
    \label{eq:pkTransrec}
        \biidxex{p}{k}{W} = \iidxex{\beta}{k-1}{W}\biidxex{p}{k-1}{W} + \iidxex{\gamma}{k-1}{W}\biidxex{y}{k}{W}
\end{equation}
with certain scalars $\iidxex{\beta}{k-1}{W}$ and $\iidxex{\gamma}{k-1}{W}$, where
$\biidxex{y}{k}{W} := \btxiidx{A}{W}\tp \rb_{k-1}$. 
Because $ \biidxex{p}{k-1}{W} = \btxiidx{R}{W}\tpi \jbbRt{\p_{k}}  $, we find from \eqref{eq:pkTransrec},
by rewriting variables in terms of $ \jbbRt{\p_{k}} $ and $ \b{y}_{k} $, that
\begin{equation}
    \label{eq:pkShortW}
    \jbbRt{\p_{k}} = \iidxex{\beta}{k-1}{W} \biidx{p}{k-1} + \iidxex{\gamma}{k-1}{W} \W \b{y}_{k},
\end{equation}
where 
\begin{equation*}
    \begin{aligned}
    \iidxex{\beta}{k-1}{W} &= \frac{1}{ \bigg(\sqrt{\frac{(\biidx{p}{k-1}\tp \W\inv\biidx{p}{k-1})
    (\biidx{y}{k}\tp \W\biidx{y}{k})}{\norm{\rb_{k-1}}^2\norm{\rb_{k-1}}^2}}
    -1\bigg)
    \bigg(\sqrt{\frac{(\biidx{p}{k-1}\tp \W\inv\biidx{p}{k-1})
    (\biidx{y}{k}\tp \W\biidx{y}{k})}{\norm{\rb_{k-1}}^2\norm{\rb_{k-1}}^2}}
    +1\bigg)},
\\ 
    \iidxex{\gamma}{k-1}{W} &= 
	\frac{(\biidx{y}{k}\tp \W\biidx{y}{k})\inv}
	{\bigg(1-
	\sqrt{\frac{\norm{\rb_{k-1}}^2\norm{\rb_{k-1}}^2}{(\biidx{p}{k-1}\tp \W\inv\biidx{p}{k-1})(\biidx{y}{k}\tp \W\biidx{y}{k})}}\bigg)
	\bigg(1+
	\sqrt{\frac{\norm{\rb_{k-1}}^2\norm{\rb_{k-1}}^2}{(\biidx{p}{k-1}\tp \W\inv\biidx{p}{k-1})(\biidx{y}{k}\tp \W\biidx{y}{k})}}\bigg)}.
	\end{aligned}
\end{equation*}
Note that \eqref{eq:pkShortW} can be evaluated efficiently as long as products
and solves with $ \W $ are inexpensive. Therefore, we typically use the identity or a diagonal matrix that
normalizes the columns of $ \A $, namely
$ \W = \text{diag}\big(\frac{1}{\norm{\A_{:,1}}},\dots,\frac{1}{\norm{\A_{:,n}}}\big) $.
}

\section{Algorithm}
\label{sec:alg}
\jbb{Because of the short recursive update formula, our method can be implemented efficiently as in Algorithm \ref{alg:PLSS}.
The parameter matrix $\B$, where $\B\tp\B=\W\inv$ and $(\B\tp\B)\inv=\W$, is optional.}

\begin{algorithm}
\caption{{\small PLSS} (Projected Linear Systems Solver)} 
\label{alg:PLSS}
\begin{algorithmic}[1]
\ENSURE
$\mathtt{A\in(m \times n),~x_0,~b} $, $\mathtt{0<maxIt}$, ($ \mathtt{Optional: B}\tp\mathtt{B \in(n \times n))} $
\IF{$ \mathtt{B}\tp\mathtt{B} \ne \mathtt{Empty} $}
 	 \STATE{$ \mathtt{WI=\mathtt{B}\tp\mathtt{B}};$} 
\ELSE
    \STATE{$ \mathtt{WI=I;}~\mathtt{ \%~Identity~size~(n \times n)}$ }
\ENDIF
\STATE{$ \mathtt{ \%~Initialization}$ }

\STATE{\begin{tabular}{l l l}
     $\mathtt{x_k=x_0;}$~& $\phantom{\mathtt{WI}}\mathtt{~r_k=b-A*x_k;}$~& $\mathtt{~y_k=A\tp*r_k;}$ \\
     $\mathtt{\rho_k=r_k\tp*r_k;}$~& $\mathtt{~WIy_k =WI \backslash y_k;}$~&
     $\mathtt{~\phi_k=y_k\tp*WIy_k;}$ \\
\end{tabular}}


\STATE{$\mathtt{p_k=(\rho_k / \phi_k).*WIy_k;}$}
\STATE{$\mathtt{\theta_k=p_k\tp(WI*p_k);}$}
\STATE{$\mathtt{x_k=x_k+p_k;}$}

\WHILE{$\mathtt{k<maxIt}$} 
    \STATE{$\mathtt{k=k+1;}$}
    \STATE{$\mathtt{r_k=r_k-A*p_k;}$}
    \STATE{$\mathtt{y_k=A\tp*r_k;}$}
    \STATE{$\mathtt{\rho_k=r_k\tp*r_k;}$}
    \STATE{$\mathtt{WIy_k =WI \backslash y_k;}$}
    \STATE{$\mathtt{\phi_k=y_k\tp*Wiy_k;}$}
    \STATE{$\mathtt{\beta_k= 1 / ((sqrt(\theta_k*\phi_k) / \rho_k -1)*(sqrt(\theta_k*\phi_k) / \rho_k+1));}$}
    \STATE{$\mathtt{\gamma_k=(\theta_k/\rho_k)*\beta_k;}$}
    \STATE{$\mathtt{p_k=\beta_k*p_k+\gamma_k*WIy_k;}$}
    \STATE{$\mathtt{\theta_k=p_k\tp(WI*p_k);}$}
    \STATE{$\mathtt{x_k=x_k+p_k;}$}
\ENDWHILE
\end{algorithmic}
\end{algorithm}

\jbb{We emphasize that Algorithm \ref{alg:PLSS} updates only four vectors
$\mathtt{r_k,~y_k}$, $\mathtt{p_k}$ and $\mathtt{x_k}$ (and uses one intermediate vector $\mathtt{WIy_k}$).
When the matrix $\mathtt{WI}$ is diagonal (which is typically the case), we always apply 
it element-wise to a vector (i.e., $\mathtt{WI~.*~p_k}$ or $\mathtt{WI~.\backslash~y_k}$).
The algorithm uses one multiplication with $\mathtt{A}$ and one with $\mathtt{A}\tp$ per iteration.
\jbrv{For reference, we note that LSQR's \cite[Table 1]{PaigeSaunders82} dominant
    number of multiplications for large sparse $ \A$ at each iteration is $O(3m+5n)$ 
    (assuming $ \A\b{v} $ and $ \A^{\tp} \b{u} $ cost $O(m)$ and $O(n)$ multiplications).
    When $ \texttt{WI} $ is the identity then Algorithm \ref{alg:PLSS} incurs $O(2m+5n)$ multiplications.}
For practical implementation one may wish to change the stopping condition in the loop.
For instance, with consistent linear systems (where $ \A\x=\bbold $ can be solved exactly), 
the condition $\mathtt{sqrt(\rho_k)/ norm(b) < \tau}$ for a tolerance $\mathtt{\tau>0}$ may be used
to stop the iterations.} 

\section{\jbb{Relation to Craig's method}}
\label{sec:craigs}
\jbb{The orthogonal residuals in our method are reminiscent of Craig's method \cite{Paige74,PaigeSaunders82}.
Indeed when $\W=\I$, the updates $\biidx{p}{k-1}$ from \eqref{eq:pkLong} or \eqref{eq:pkShort}
\jbbRo{with $\Sbold_k $ from \eqref{eq:S}} correspond to the updates in Craig's method. For background, Craig's method can be developed using the Golub-Kahan 
bidiagonalization procedure as described in \cite[Secs.~3 \& 7.2]{PaigeSaunders82}. 
With $\beta_1 \b{u}_1 = \b{b}$, $\beta_1 \equiv \norm{\b{b}}$, the bidiagonalization
generates orthonormal
matrices $\Uk$ and $\Vk$ in $\mathbb{R}^{n\times k}$ and a lower bidiagonal
matrix $\Lk \in \mathbb{R}^{k \times k}$ with diagonals $\alpha_1,\ldots,\alpha_k$
and subdiagonals $\beta_2,\ldots,\beta_k$. 
After $k$ iterations, the following relations hold:
\begin{align}
     \b{A}\Vk &= \Uk \Lk+\beta_{k+1}\bko{u}\ek\tp,  \label{eq:bidiag1}
\\  \b{A}\tp\Uk &= \Vk \Lk\tp.                      \label{eq:bidiag2} 
\end{align}
With $\zk \in \mathbb{R}^k$ defined by
$\Lk \zk=\beta_1\biidx{e}{1}$ and $\zeta_k$ denoting the $k^{\text{th}}$ element of $\zk$, the iterates in Craig's method are computed as 
\begin{align}
   \xk    &= \Vk\zk=\biidx{x}{k-1}+\zeta_k \bk{v},
\\ \bk{r} &= \b{b}-\b{A}\bk{x}=-\zeta_k\beta_{k+1}\bko{u}.  \label{eq:craigRes}
\end{align}
We prove that iterates generated by Craig's method are equivalent to \eqref{eq:pkLong} with $\b{W}=\b{I}$ 
\jbbRo{and using residuals in a sketch}.

\begin{theorem}
	\label{thrm:craigsMethod}
	The updates $\zeta_k\bk{v}\equiv\biidx{p}{k}^{\textnormal{C}}$ in Craig's method are equal to the 
	updates $\biidx{p}{k}$ in \eqref{eq:pkLong} or \eqref{eq:pkShort} with $\W=\I$ \jbbRo{and $\Sbold_k$ in \eqref{eq:S}}.
\end{theorem}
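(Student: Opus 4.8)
The plan is to establish that Craig's updates $\biidx{p}{k}^{\textnormal{C}} = \zeta_k\bk{v}$ satisfy the two geometric properties that uniquely determine the PLSS update $\biidx{p}{k}$ of \eqref{eq:pkLong} with $\W=\I$ and the residual sketch $\Sk$ of \eqref{eq:S}, and then to conclude by induction on $k$. Throughout I would take $\x_0 = \b{0}$, so that $\b{r}_0 = \bbold = \beta_1\b{u}_1$ matches the normalization used in the Golub--Kahan recursion.

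First I would record the characterization. With $\W=\I$ and $\Sk$ as in \eqref{eq:S}, formula \eqref{eq:pkLong} writes $\biidx{p}{k} = \A\tp\Sk\b{c}$ with $\b{c} = (\Sk\tp\A\A\tp\Sk)\inv\Sk\tp\b{r}_{k-1}$, so that $\biidx{p}{k}$ is the unique vector satisfying \emph{(a)} $\biidx{p}{k}\in\text{range}(\A\tp\Sk) = \A\tp\,\text{span}(\b{r}_0,\dots,\b{r}_{k-1})$, and \emph{(b)} $\Sk\tp(\b{r}_{k-1}-\A\biidx{p}{k}) = \b{0}$, i.e.\ $\rk := \b{r}_{k-1}-\A\biidx{p}{k}$ is orthogonal to $\b{r}_0,\dots,\b{r}_{k-1}$. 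Uniqueness is immediate: if $\p,\p'$ both satisfy (a)--(b), then $\p-\p' = \A\tp\Sk\b{w}$ with $\Sk\tp\A\A\tp\Sk\b{w}=\b{0}$, hence $\norm{\A\tp\Sk\b{w}}^2 = \b{w}\tp\Sk\tp\A\A\tp\Sk\b{w}=0$ and $\p=\p'$ (equivalently, the minimum-norm solution of \eqref{eq:cons} lies in the row space of $\Sk\tp\A$).

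Next I would verify (a)--(b) for Craig's iterates, assuming inductively that the two methods produce identical $\x_j$ and $\b{r}_j$ for $j<k$ (so that they share $\b{r}_{k-1}$ and the sketch $\Sk = [\,\b{r}_0\ \cdots\ \b{r}_{k-1}\,]$). For (b): by \eqref{eq:craigRes}, $\bk{r}=\bbold-\A\bk{x} = -\zeta_k\beta_{k+1}\bko{u}$, and more generally $\b{r}_j^{\textnormal{C}} = -\zeta_j\beta_{j+1}\biidx{u}{j+1}$ for $j\ge 1$ (with $\b{r}_0=\beta_1\b{u}_1$); since $\b{u}_1,\dots,\bko{u}$ are orthonormal, $\bk{r}$ is orthogonal to $\b{u}_1,\dots,\bk{u}$, hence to $\b{r}_0,\dots,\b{r}_{k-1}$. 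For (a): relation \eqref{eq:bidiag2} gives $\Vk = \A\tp\Uk\Lk\tpi$ with $\Lk$ nonsingular, so $\text{span}(\bk{v})\subseteq\text{span}(\Vk) = \text{range}(\A\tp\Uk) = \A\tp\,\text{span}(\b{u}_1,\dots,\bk{u})$, and $\text{span}(\b{u}_1,\dots,\bk{u}) = \text{span}(\b{r}_0,\dots,\b{r}_{k-1})$ because each $\b{r}_{j-1}$ is a nonzero multiple of $\b{u}_j$ before termination. Thus $\biidx{p}{k}^{\textnormal{C}} = \zeta_k\bk{v}$ satisfies both (a) and (b), so by uniqueness $\biidx{p}{k}^{\textnormal{C}} = \biidx{p}{k}$, and consequently $\x_k$ and $\b{r}_k$ coincide as well; the base case $k=1$ is immediate from $\x_0=\b{0}$ and $\Sbold_1=[\b{r}_0]$. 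The equivalence of \eqref{eq:pkLong} and \eqref{eq:pkShort} (Theorem \ref{thrm:2}) then completes the proof. I expect the only real nuisance to be the bookkeeping with breakdown/termination indices: the argument is valid for every $k$ up to the step where $\beta_{k+1}=0$ (equivalently $\b{r}_k=\b{0}$), at which point both methods have already solved \eqref{eq:axb} and all subsequent updates vanish; everything else is the clean subspace-plus-uniqueness argument above.
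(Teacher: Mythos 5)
Your proof is correct, and it takes a genuinely different (and arguably cleaner) route than the paper's. The paper works forward from the Golub--Kahan recurrences: it computes $\A\tp\biidx{r}{k-1}$ explicitly in terms of $\bk{V}$, $\Lk$, and the scalars $\alpha_k,\beta_k,\zeta_{k-1}$, assembles the resulting identities into a KKT system for $\biidx{p}{k}^{\textnormal{C}}$, solves that system to obtain the projection formula $\A\tp\bk{U}(\bk{U}\tp\A\A\tp\bk{U})\inv\bk{U}\tp\biidx{r}{k-1}$, and only then substitutes $\bk{U}=\Sk\bk{D}$ to match \eqref{eq:pkLong}. You instead isolate the two properties that characterize the PLSS update --- membership in $\mathrm{range}(\A\tp\Sk)$ and orthogonality of the new residual to the sketch --- prove uniqueness, and verify both properties for $\zeta_k\bk{v}$ directly from \eqref{eq:bidiag2} and the orthonormality of $\Uk$. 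This bypasses the paper's computation with $\bk{V}(\Lk\tp-\Lk)\bk{e}$ and the auxiliary scalar $\phi_k$ entirely; what you lose is the explicit closed form \eqref{eq:craigStep}, which the paper's derivation produces as a byproduct. Two further points in your favor: you make explicit the induction that the two iterations share the same residual history (so that the sketch \eqref{eq:S} really consists of Craig's residuals), which the paper leaves implicit, and you flag the breakdown indices $\beta_{k+1}=0$ where both iterations terminate. Both proofs ultimately rest on the same facts --- $\mathrm{span}(\b{r}_0,\dots,\b{r}_{k-1})=\mathrm{span}(\b{u}_1,\dots,\b{u}_k)$ and $\bk{U}\tp\bk{r}=\b{0}$ --- so the difference is one of packaging rather than substance, but your variational/uniqueness framing is the more economical of the two.
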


\begin{proof}
First note that the residual in Craig's method is 
\begin{equation*}
    \bk{r}=\b{b}-\b{A}\bk{x}=\b{b}-\b{A}(\biidx{x}{k-1}+\zeta_k\bk{v})=\biidx{r}{k-1}-\zeta_k\b{A}\bk{v},
\end{equation*}
and also $\bk{r}=-\zeta_k\beta_{k+1}\bko{u}$ from \eqref{eq:craigRes}. By orthogonality of $\bk{U}$ it holds that
\begin{equation}
    \label{eq:orthRes}
    \bk{U}\tp\bk{r}=\b{0} \quad \text{ and } \quad \bk{U}\tp\b{A}(\zeta_k\bk{v})=\bk{U}\tp\biidx{r}{k-1}.
\end{equation}
We define the update as $\zeta_k\bk{v}\equiv\biidx{p}{k}^{\textnormal{C}}$. The scalar $\zeta_k$
is defined recursively as $\zeta_k = -\frac{\beta_k}{\alpha_k} \zeta_{k-1} $ (with $\zeta_0\equiv-1$, cf. \cite[Sec.~7.2]{PaigeSaunders82}). From \eqref{eq:bidiag1}--\eqref{eq:bidiag2}
 we deduce
 \begin{align*}
     \b{A}\tp\biidx{r}{k-1} &= \b{A}\tp(-\zeta_{k-1}\beta_k\bk{u}) \\
     &= \b{A}\tp(-\zeta_{k-1}\beta_k\bk{U}\bk{e}) \\
     &= -\zeta_{k-1}\beta_k\bk{V}\bk{L}\tp \bk{e} \\
     &= -\zeta_{k-1}\beta_k\bk{V}\bmat{\b{0} \\ \beta_k \\ \alpha_k} \\
     &= -\zeta_{k-1}\beta_k \alpha_k \bk{v} -\zeta_{k-1}\beta_k\bk{V}\bmat{\b{0} \\ \beta_k \\ 0} \\
     &= \alpha_k^2 \biidx{p}{k}^{\textnormal{C}} -\zeta_{k-1}\beta_k\bk{V}(\bk{L}\tp-\bk{L})\bk{e}.
 \end{align*}
 Using 
 $\phi_k \equiv -\alpha_k^2/(\zeta_{k-1}\beta_k) $ and a vector $\bs{\lambda}_{k} \in \mathbb{R}^{k}$, we have
 \begin{equation}
    \label{eq:craigTwo}
     \frac{-\b{A}\tp\biidx{r}{k-1}}{\zeta_{k-1}\beta_k}  =
     \b{A}\tp\bk{u}=\frac{-\alpha_k^2}{\zeta_{k-1}\beta_k} \biidx{p}{k}^{\textnormal{C}}+
     \bk{V}(\bk{L}\tp-\bk{L})\bk{e}\equiv
     \phi_k \biidx{p}{k}^{\textnormal{C}} + \b{A}\tp \bk{U}\bs{\lambda}_{k}.
 \end{equation}
 Combining the second equality in \eqref{eq:orthRes} with \eqref{eq:craigTwo} gives the system
 \begin{equation*}
     \bmat{\phi_k \I & \b{A}\tp \bk{U} \\ \bk{U}\tp \b{A} & \b{0}}
     \bmat{\biidx{p}{k}^{\textnormal{C}} \\ \bs{\lambda}_{k}} = 
     \bmat{ \b{A}\tp \bk{u} \\ \bk{U}\tp \biidx{r}{k-1} },
 \end{equation*}
 which leads to
 \begin{equation}
    \label{eq:craigStep}
     \b{A}\tp\bk{U}(\bk{U}\tp\b{A}\b{A}\tp\bk{U})^{-1}\bk{U}\tp\biidx{r}{k-1}=\biidx{p}{k}^{\textnormal{C}}.
 \end{equation}
 With $\biidx{S}{\jbbR{k}}$ as defined in \eqref{eq:S}, matrices $ \bk{U} $ and $ \biidx{S}{\jbbR{k}} $
 are closely related:
 \begin{equation*}
     \bk{U} = \bmat{\biidx{r}{0} & \biidx{r}{1} & \cdots & \biidx{r}{k-1}} \bk{D}=\biidx{S}{k}\bk{D}, \quad \bk{D} \equiv -\text{diag}(\zeta_0\beta_1,\zeta_1\beta_2,
     \ldots,\zeta_{k-1}\beta_k).
 \end{equation*}
 Substituting $ \bk{U}=\biidx{S}{\jbbR{k}}\bk{D} $ into \eqref{eq:craigStep}, we obtain
 \begin{equation}
    \label{eq:craigStepS}
     \b{A}\tp\biidx{S}{\jbbR{k}}(\biidx{S}{\jbbR{k}}\tp\b{A}\b{A}\tp\biidx{S}{\jbbR{k}})^{-1}\biidx{S}{\jbbR{k}}\tp\biidx{r}{k-1}=\biidx{p}{k}^{\textnormal{C}}.
 \end{equation}
 \jbbR{Comparing \eqref{eq:craigStepS} with \eqref{eq:pkLong} we see that the updates are the same
 when $\W=\I$.} 
 \end{proof}}

\section{\jbbRo{PLSS Kaczmarz}}
\label{sec:kaczmark}
\jbbRo{Another way to construct the sketching matrix is to 
concatenate columns of the identity matrix $\ek$ in the sketch $\Sk = \bmat{\biidx{e}{1} & \biidx{e}{2} & \cdots & \biidx{e}{k}}$. When the columns are assembled in a random
order, this process generalizes the randomized Kaczmarz iteration \cite{Kaczmarz,StrohmerVershynin09}. We denote
a random identity column by $\b{e}_{i_k}$. From $\Sbold_{k-1}\tp \b{r}_{k-1} = \b{0}$ we have $ \Sk\tp \biidx{r}{k-1} = r_{i_k}^{(k-1)} \b{e}_{i_k} \tp $, where $r_{i_k}^{(k-1)}$ is the
${i_k}^{\text{th}}$ element of $\biidx{r}{k-1}$. Thus the updates with this sketch are (cf.~\eqref{eq:pkLong})
\begin{equation*}
    \biidx{p}{k} = r_{i_k}^{(k-1)} \Yk (\Yk\tp \Yk)^{-1} \b{e}_{i_k}. 
\end{equation*}
In the notation of Sections \ref{subsec:orthP} and \ref{subsec:lincomb}, the updates are orthogonal and can be 
represented  by
\begin{equation*}
    \jbbRt{\p_{k}} = \b{P}_{k-1} \b{g}_{k-1} + \gamma_{k-1} \jbbRt{\y_{k}}.
\end{equation*}
Since $ \biidx{y}{k} = \b{A}\tp \b{e}_{i_k} = \biidx{a}{i_k} $ (the ${i_k}^{\textnormal{th}}$ row of $ \b{A} $) and 
from $ \jbbRt{\y_{k}\tp}\jbbRt{\p_{k}} = r_{i_k}^{(k-1)} = \b{e}_{i_k} \tp (\b{b} -\A \x_{k-1}) $, we develop the update
\begin{align}
    \label{eq:pkKacz}
    \biidx{p}{k} &= \gamma_{k-1}(\biidx{a}{i_k} - \biidx{P}{k-1} \bs{\Theta}^{-1}_{k-1} \biidx{P}{k-1}\tp \biidx{a}{i_k} ), \\
    \gamma_{k-1} &= \frac{\b{e}_{i_k} \tp (\b{b} - \b{A} \x_{k-1})}{(\| \biidx{a}{i_k} \|_2 - \|\biidx{d}{k-1} \|_2)(\| \biidx{a}{i_k} \|_2 + \|\biidx{d}{k-1} \|_2) },\\ 
    \biidx{d}{k-1} &\equiv \bs{\Theta}^{-1/2}_{k-1} \biidx{P}{k-1} \tp\biidx{a}{i_k}. 
\end{align}
If the history of previous updates is not used, i.e., $ \biidx{P}{k-1} = \b{0} $,
then $ \biidx{p}{k} $ is equal to the update in the Kaczmarz method. The Kaczmarz method is useful,
especially in situations where the entire matrix is not accessible (possibly because of its size), because it enables versions that access only one row of $ \b{A} $ each iteration. Note that update \eqref{eq:pkKacz} can be implemented with one multiplication of 
$ \biidx{P}{k-1}  $ and one $ \b{A} $. To compute the next update using 
$ \biidx{r}{k} = \biidx{r}{k-1} - \b{A} \biidx{p}{k} $ and 
$\ek \tp \biidx{P}{k} \tp \biidx{a}{i_k}\tp = \biidx{p}{k} \tp \b{A}\tp \b{e}_{i_k} $,
we compute $ \b{A}\biidx{p}{k} $ 
and append it to an array that stores
$ \bmat{ \b{A} \biidx{p}{1} & \cdots \b{A} \biidx{p}{k} } \tp = \biidx{P}{k}\tp \b{A}\tp $.
With this, $ \biidx{p}{k+1}$ can be computed by selecting the $(k+1)^{\textnormal{th}}$ column of $ \biidx{P}{k}\tp \b{A}\tp $ to obtain $\biidx{P}{k}\tp \biidx{a}{i_{k+1}}\tp$, then 
multiplying $ \biidx{P}{k} ( \bs{\Theta}^{-1/2}_{k} \bk{d} ) $ and forming
\jbbRt{$\p_{k+1} = \gamma_k (\biidx{a}{i_{k+1}}\tp - \biidx{P}{k} ( \bs{\Theta}^{-1/2}_{k} \bk{d} ) ) $}.}

\section{Numerical experiments}
\label{sec:numex}

\jbbRo{Our algorithms are implemented in MATLAB and PYTHON 3.9.} \jbbR{The numerical experiments are carried out in MATLAB 2016a on a 
MacBook Pro @2.6 GHz Intel Core i7 with 32 GB of memory. For comparisons, we use the implementations of \cite{GowerRichtarik15},
a randomized version of \eqref{eq:pkLong}, Algorithm \ref{alg:PLSS}, {\small LSQR}, {\small \jbrv{LSMR}} \cite{FongSaunders11} and
{\small CRAIG} \cite[p58]{PaigeSaunders82}, \cite{SaundersCRAIG}.
All codes are available in the public domain \cite{PLSSsoftware}.
The stopping criterion is either $ \|\b{A}\bk{x} - \bbold \|_2/ \|\bbold\|_2 \le \epsilon $ or $ \|\b{A}\bk{x} - \bbold\|_2 \le \epsilon $, depending on the experiment.
Unless otherwise specified, the iteration limit is $n$. We label Algorithm~\ref{alg:PLSS} 
with $\b{W}=\b{I}$ and $\b{W}=\text{diag}(\frac{1}{\norm{\A_{:,1}}},\cdots,\frac{1}{\norm{\A_{:,n}}})$ as {\small PLSS} and {\small PLSS W} respectively.} \jbbRo{Our implementation of 
update \eqref{eq:pkKacz} with random columns of the identity matrix is called {\small PLSS KZ }.}

\subsection{Experiment I}
\label{sec:numex1}

\jbbR{This experiment uses moderately large sparse matrices with
$m > n$ and $10^3 \le n \le 10^4$. All linear systems are consistent and we set
$\texttt{x=ones(n,1); x(1)=10; b=A*x;}$ and $\x_0 = \b{0}$. \jbrv{The condition number
of each matrix is in Table \ref{tab:cond}.} For reference,
we add the method ``{Rand.~Proj.}'', which uses update formula \eqref{eq:pkLong} (with $\b{W}=\I$) with $\Sk \in \mathbb{R}^{m \times r}$ being a random standard normal matrix. The sketching 
parameter is $r=10$. Each iteration with this method thus recomputes $\Yk=\A\tp\Sk \in \mathbb{R}^{n \times r}$,
$\Yk\tp \Yk$, and a solve with the latter matrix. Thus the computational effort with this approach is typically
much larger than with the proposed methods. Relative residuals $\|\A\xk-\bbold\|_2 /\|\bbold\|_2 \le \epsilon$ are used to stop
with $\epsilon=10^{-2}$. Table~\ref{tab:exp1} records detailed outcomes of the experiment.}

\begin{table}[t]  
 \captionsetup{width=1\linewidth}
\caption{\jbbR{Experiment I compares 4 solvers on 42 linear systems from the SuiteSparse Matrix Collection \cite{SuiteSparseMatrix} with stopping tolerance $\epsilon = 10^{-2}$ and iteration limit $n$. 
\jbrv{In column 3, ``Rank'' is the structural rank of the matrix and} ``Dty"  is the density of a particular matrix $ \b{A} $ calculated as $ \text{Dty} = \frac{\text{nnz}(\b{A})}{m \cdot n} $.
Entries with 
superscript $^\dagger$ 
denote problems for which the solver did not converge to the specified tolerance. 
Bold entries mark the
fastest times, while second fastest times are italicized.
}}
\label{tab:exp1}

\setlength{\tabcolsep}{2pt} 

\scriptsize
\hbox to 1.00\textwidth{\hss 
\begin{tabular}{|l | c | c | c c c | c c c | c c c | c c c |} 
		\hline
		\multirow{2}{*}{Problem} & \multirow{2}{*}{$m$/$n$} & \multirow{2}{*}{\text{\jbrv{Rank}}/\text{Dty}} & 
        \multicolumn{3}{c|}{\jbbo{Rand. Proj.}}  & 
		\multicolumn{3}{c|}{\jbbo{PLSS}} &  		
		\multicolumn{3}{c|}{\jbbo{PLSS W}} &  	
		\multicolumn{3}{c|}{\jbbo{LSQR}} 	\\
		\cline{4-15}
		& & 
		& It &  Sec & Res
		& It &  Sec & Res
		& It &  Sec & Res
		& It &  Sec & Res \\
		\hline
$\texttt{lpi\_gran}$ & 2658/2525 & 2311/0.003 &1192 &0.8& 0.01 &62 &\emph{0.012}& 0.01 &26 &\textbf{0.011}& 0.01 &2525 &0.42& 3e-05 \\ 
$\texttt{landmark}$ & 71952/2704 & 2673/0.006 &$\textnormal{2704}^{\dagger}$ &$\textnormal{46}^{\dagger}$& $ \textnormal{0.02}^{\dagger}$ &33 &\emph{0.083}& 0.01 &9 &\textbf{0.039}& 0.008 &1196 &2.8& 4e-05 \\ 
$\texttt{Kemelmacher}$ & 28452/9693 & 9693/0.0004 &$\textnormal{9693}^{\dagger}$ &$\textnormal{51}^{\dagger}$& $ \textnormal{0.7}^{\dagger}$ &2258 &0.96& 0.01 &1323 &\emph{0.63}& 0.01 &839 &\textbf{0.52}& 0.005 \\ 
$\texttt{Maragal\_4}$ & 1964/1034 & 995/0.01 &$\textnormal{1034}^{\dagger}$ &$\textnormal{0.56}^{\dagger}$& $ \textnormal{0.08}^{\dagger}$ &97 &\emph{0.0085}& 0.009 &57 &\textbf{0.0063}& 0.009 &362 &0.045& 0.0002 \\ 
$\texttt{Maragal\_5}$ & 4654/3320 & 2690/0.006 &$\textnormal{3320}^{\dagger}$ &$\textnormal{4.8}^{\dagger}$& $ \textnormal{0.08}^{\dagger}$ &181 &\emph{0.052}& 0.01 &141 &\textbf{0.047}& 0.009 &588 &0.2& 0.0001 \\ 
$\texttt{Franz4}$ & 6784/5252 & 5252/0.001 &3878 &5.7& 0.01 &8 &\textbf{0.0017}& 0.007 &4 &\emph{0.0019}& 0.003 &10 &0.0044& 7e-05 \\ 
$\texttt{Franz5}$ & 7382/2882 & 2882/0.002 &2536 &3.9& 0.01 &4 &\textbf{0.00081}& 0.001 &3 &\emph{0.0014}& 0.005 &5 &0.0019& 0.0001 \\ 
$\texttt{Franz6}$ & 7576/3016 & 3016/0.002 &2434 &3.8& 0.01 &3 &\textbf{0.00069}& 0.006 &4 &\emph{0.0016}& 0.003 &5 &0.0017& 5e-05 \\ 
$\texttt{Franz7}$ & 10164/1740 & 1740/0.002 &1555 &3& 0.01 &1 &\textbf{0.00042}& 0 &2 &\emph{0.00087}& 0.005 &1 &0.00088& 3e-15 \\ 
$\texttt{Franz8}$ & 16728/7176 & 7176/0.0008 &6541 &22& 0.01 &4 &\textbf{0.0015}& 0.005 &4 &\emph{0.0032}& 0.004 &7 &0.0033& 0.0001 \\ 
$\texttt{Franz9}$ & 19588/4164 & 4164/0.001 &3270 &13& 0.01 &7 &\emph{0.0034}& 0.006 &4 &\textbf{0.0031}& 0.005 &12 &0.0076& 2e-06 \\ 
$\texttt{Franz10}$ & 19588/4164 & 4164/0.001 &3322 &13& 0.01 &7 &\emph{0.003}& 0.006 &4 &\textbf{0.0028}& 0.005 &12 &0.0072& 2e-06 \\ 
$\texttt{GL7d12}$ & 8899/1019 & 1019/0.004 &$\textnormal{1019}^{\dagger}$ &$\textnormal{1.8}^{\dagger}$& $ \textnormal{0.03}^{\dagger}$ &14 &\emph{0.003}& 0.008 &7 &\textbf{0.0019}& 0.008 &32 &0.0085& 4e-05 \\ 
$\texttt{GL7d13}$ & 47271/8899 & 8897/0.0008 &$\textnormal{8899}^{\dagger}$ &$\textnormal{1.1e+02}^{\dagger}$& $ \textnormal{0.02}^{\dagger}$ &19 &\emph{0.033}& 0.009 &7 &\textbf{0.017}& 0.009 &44 &0.08& 1e-05 \\ 
$\texttt{ch6-6-b3}$ & 5400/2400 & 2400/0.002 &2035 &2.3& 0.01 &4 &\textbf{0.0005}& 0.008 &4 &\emph{0.00081}& 0.008 &6 &0.0016& 0.0003 \\ 
$\texttt{ch7-6-b3}$ & 12600/4200 & 4200/0.001 &3531 &8.6& 0.01 &4 &\textbf{0.0011}& 0.004 &4 &\emph{0.0018}& 0.004 &6 &0.0023& 0.0002 \\ 
$\texttt{ch7-8-b2}$ & 11760/1176 & 1176/0.003 &1003 &2.2& 0.01 &3 &0.002& 0.001 &3 &\textbf{0.001}& 0.001 &5 &\emph{0.0019}& 4e-06 \\ 
$\texttt{ch7-9-b2}$ & 17640/1512 & 1512/0.002 &1326 &4.2& 0.01 &3 &\textbf{0.001}& 0.001 &3 &\emph{0.0014}& 0.001 &5 &0.0024& 6e-06 \\ 
$\texttt{ch8-8-b2}$ & 18816/1568 & 1568/0.002 &1377 &4.7& 0.01 &3 &\textbf{0.0011}& 0.0005 &3 &\emph{0.0014}& 0.0005 &4 &0.0022& 7e-16 \\ 
$\texttt{cis-n4c6-b3}$ & 5970/1330 & 1330/0.003 &1118 &1.3& 0.01 &2 &\textbf{0.00024}& 2e-17 &2 &\emph{0.00049}& 2e-17 &2 &0.00082& 5e-16 \\ 
$\texttt{cis-n4c6-b4}$ & 20058/5970 & 5970/0.0008 &4462 &17& 0.01 &2 &\textbf{0.0011}& 0.005 &2 &\emph{0.002}& 0.008 &4 &0.0034& 7e-05 \\ 
$\texttt{mk10-b3}$ & 4725/3150 & 3150/0.001 &3047 &2.9& 0.01 &6 &\textbf{0.00065}& 0.002 &6 &\emph{0.0011}& 0.002 &7 &0.0017& 7e-16 \\ 
$\texttt{mk11-b3}$ & 17325/6930 & 6930/0.0006 &6079 &20& 0.01 &5 &\textbf{0.0016}& 0.001 &5 &\emph{0.0023}& 0.001 &6 &0.0032& 4e-16 \\ 
$\texttt{mk12-b2}$ & 13860/1485 & 1485/0.002 &1306 &3.3& 0.01 &3 &\textbf{0.00086}& 0.002 &3 &\emph{0.0011}& 0.002 &4 &0.0019& 8e-16 \\ 
$\texttt{n2c6-b4}$ & 3003/1365 & 1365/0.004 &929 &0.58& 0.01 &1 &\textbf{0.00016}& 8e-17 &1 &\emph{0.00031}& 2e-16 &1 &0.00059& 4e-14 \\ 
$\texttt{n2c6-b5}$ & 4945/3003 & 3003/0.002 &1861 &1.9& 0.01 &1 &\textbf{0.00022}& 2e-16 &1 &\emph{0.00049}& 2e-16 &1 &0.0016& 4e-16 \\ 
$\texttt{n2c6-b6}$ & 5715/4945 & 4945/0.001 &3585 &4.9& 0.01 &4 &\textbf{0.0011}& 0.008 &4 &\emph{0.002}& 0.008 &8 &0.0037& 0.0001 \\ 
$\texttt{n3c6-b4}$ & 3003/1365 & 1365/0.004 &907 &0.56& 0.01 &1 &\textbf{0.00016}& 8e-17 &1 &\emph{0.00029}& 2e-16 &1 &0.00059& 4e-14 \\ 
$\texttt{n3c6-b5}$ & 5005/3003 & 3003/0.002 &1825 &1.9& 0.01 &1 &\textbf{0.00021}& 2e-16 &1 &\emph{0.00048}& 2e-16 &1 &0.00066& 4e-16 \\ 
$\texttt{n3c6-b6}$ & 6435/5005 & 5005/0.001 &2729 &3.8& 0.01 &1 &\textbf{0.00035}& 8e-17 &1 &0.0013& 2e-16 &1 &\emph{0.00098}& 1e-13 \\ 
$\texttt{n4c5-b4}$ & 2852/1350 & 1350/0.004 &953 &0.57& 0.01 &3 &\textbf{0.0003}& 0.004 &3 &\emph{0.00045}& 0.003 &5 &0.0011& 0.0001 \\ 
$\texttt{n4c5-b5}$ & 4340/2852 & 2852/0.002 &1925 &1.8& 0.01 &3 &\textbf{0.00044}& 0.009 &3 &\emph{0.00075}& 0.005 &5 &0.0014& 0.0003 \\ 
$\texttt{n4c5-b6}$ & 4735/4340 & 4340/0.002 &2417 &2.7& 0.01 &3 &\textbf{0.00069}& 0.008 &3 &\emph{0.0013}& 0.007 &7 &0.0029& 8e-05 \\ 
$\texttt{n4c6-b3}$ & 5970/1330 & 1330/0.003 &1114 &1.3& 0.01 &2 &\textbf{0.00026}& 2e-17 &2 &\emph{0.00043}& 2e-17 &2 &0.00072& 5e-16 \\ 
$\texttt{n4c6-b4}$ & 20058/5970 & 5970/0.0008 &4483 &17& 0.01 &2 &\textbf{0.0012}& 0.005 &2 &\emph{0.0018}& 0.008 &4 &0.0032& 7e-05 \\ 
$\texttt{rel7}$ & 21924/1045 & 1043/0.002 &0 &0.0047& 0 &0 &\textbf{0.00025}& 0 &0 &\emph{0.00085}& 0 &0 &0.0014& 0 \\ 
$\texttt{relat7b}$ & 21924/1045 & 1043/0.004 &0 &0.0042& 0 &0 &\textbf{0.00021}& 0 &0 &0.00082& 0 &0 &\emph{0.00043}& 0 \\ 
$\texttt{relat7}$ & 21924/1045 & 1043/0.004 &0 &0.004& 0 &0 &\textbf{0.00022}& 0 &0 &0.00058& 0 &0 &\emph{0.00041}& 0 \\ 
$\texttt{mesh\_deform}$ & 234023/9393 & 9393/0.0004 &$\textnormal{9393}^{\dagger}$ &$\textnormal{4.2e+02}^{\dagger}$& $ \textnormal{0.1}^{\dagger}$ &290 &\emph{0.95}& 0.01 &243 &\textbf{0.82}& 0.01 &551 &1.9& 0.0001 \\ 
$\texttt{162bit}$ & 3606/3597 & 3460/0.003 &$\textnormal{3597}^{\dagger}$ &$\textnormal{3.7}^{\dagger}$& $ \textnormal{0.02}^{\dagger}$ &90 &\emph{0.017}& 0.01 &44 &\textbf{0.01}& 0.01 &1008 &0.3& 1e-05 \\ 
$\texttt{176bit}$ & 7441/7431 & 7110/0.001 &$\textnormal{7431}^{\dagger}$ &$\textnormal{16}^{\dagger}$& $ \textnormal{0.02}^{\dagger}$ &141 &\emph{0.055}& 0.009 &46 &\textbf{0.021}& 0.01 &1688 &0.89& 7e-06 \\ 
$\texttt{specular}$ & 477976/1600 & 1442/0.01 &$\textnormal{1600}^{\dagger}$ &$\textnormal{2.2e+02}^{\dagger}$& $ \textnormal{0.02}^{\dagger}$ &50 &\emph{0.81}& 0.01 &6 &\textbf{0.2}& 0.009 &1600 &26& 3e-05 \\		
 \hline
 \end{tabular}
 \hss}
\end{table} 

    \subsection{\jbrv{Experiment II}}
    \label{sec:EX_II_new}
    \jbrv{In order to place our algorithms among state-of-the-art solvers, we 
            rerun the problems from Experiment I using the convergence
            criterion $\|\A\xk-\bbold\|_2 / \| \bbold \|_2 \le \epsilon $ with $\epsilon=10^{-6}$. Note that 
            this criterion is stricter than the one from before. The maximum number of iterations is
            $n+1000$.
            Instead of the randomized projection, we include {\small LSMR} as an additional solver. 
            Overall, we observe in Table~\ref{tab:exp5} that the four algorithms are competitive with each other in terms of
            iterations and times. Additionally, {\small PLSS}, {\small LSQR} and {\small LSMR} converged 
            on all but 3 problems, while {\small PLSS W} converged on all problems but one.}
    
    
    \begin{table}[p]  
 \captionsetup{width=1\linewidth}
\caption{\jbbR{Experiment II compares 4 solvers on 42 linear systems from the SuiteSparse Matrix Collection \cite{SuiteSparseMatrix} with stopping tolerance $\epsilon = 10^{-6}$ and iteration limit $n+1000$. 
\jbrv{In column 3, ``Rank'' is the structural rank of the matrix and} ``Dty" is the density of a particular matrix $ \b{A} $ calculated as $ \text{Dty} = \frac{\text{nnz}(\b{A})}{m \cdot n} $.
Entries with 
$^{\dagger}$ 
denote problems for which the solver did not converge to the specified tolerance. 
Bold entries mark the
fastest times, while second fastest times are italicized.
}}
\label{tab:exp5}

\setlength{\tabcolsep}{2pt} 

\scriptsize
\hbox to 1.00\textwidth{\hss 
\begin{tabular}{|l | c | c | c c c | c c c | c c c | c c c |} 
		\hline
		\multirow{2}{*}{Problem} & \multirow{2}{*}{$m$/$n$} & \multirow{2}{*}{\text{\jbrv{Rank}}/\text{Dty}} & 
        \multicolumn{3}{c|}{\jbbo{PLSS}}  & 
		\multicolumn{3}{c|}{\jbbo{PLSS W}} &  		
		\multicolumn{3}{c|}{\jbbo{LSQR}} &  	
		\multicolumn{3}{c|}{\jbrv{LSMR}} 	\\
		\cline{4-15}
		& & 
		& It &  Sec & Res
		& It &  Sec & Res
		& It &  Sec & Res
		& It &  Sec & Res \\
		\hline
$\texttt{lpi\_gran}$ & 2658/2525 & 2311/0.003 &$\textnormal{3525}^{\dagger}$ &$\textnormal{\emph{0.37}}^{\dagger}$& $ \textnormal{0.0002}^{\dagger}$ &$\textnormal{3525}^{\dagger}$ &$\textnormal{0.38}^{\dagger}$& $ \textnormal{0.0001}^{\dagger}$ &$\textnormal{3525}^{\dagger}$ &$\textnormal{0.54}^{\dagger}$& $ \textnormal{1e-05}^{\dagger}$ &$\textnormal{3525}^{\dagger}$ &$\textnormal{\textbf{0.34}}^{\dagger}$& $ \textnormal{2e-05}^{\dagger}$ \\ 
$\texttt{landmark}$ & 71952/2704 & 2673/0.006 &$\textnormal{3704}^{\dagger}$ &$\textnormal{9}^{\dagger}$& $ \textnormal{0.0002}^{\dagger}$ &153 &\textbf{0.39}& 9e-07 &$\textnormal{3704}^{\dagger}$ &$\textnormal{8.7}^{\dagger}$& $ \textnormal{7e-06}^{\dagger}$ &$\textnormal{3704}^{\dagger}$ &$\textnormal{\emph{8.2}}^{\dagger}$& $ \textnormal{9e-06}^{\dagger}$ \\ 
$\texttt{Kemelmacher}$ & 28452/9693 & 9693/0.0004 &3199 &1.4& 1e-06 &1824 &\textbf{0.9}& 9e-07 &3010 &1.8& 1e-06 &3112 &\emph{1.4}& 1e-06 \\ 
$\texttt{Maragal\_4}$ & 1964/1034 & 995/0.01 &838 &0.069& 1e-06 &363 &\textbf{0.032}& 9e-07 &688 &0.083& 1e-06 &715 &\emph{0.06}& 1e-06 \\ 
$\texttt{Maragal\_5}$ & 4654/3320 & 2690/0.006 &4177 &1.2& 1e-06 &1655 &\textbf{0.49}& 7e-07 &3934 &1.3& 1e-06 &4121 &\emph{1}& 1e-06 \\ 
$\texttt{Franz4}$ & 6784/5252 & 5252/0.001 &12 &0.0032& 2e-14 &9 &\emph{0.0026}& 1e-09 &11 &0.0047& 4e-13 &11 &\textbf{0.0024}& 3e-13 \\ 
$\texttt{Franz5}$ & 7382/2882 & 2882/0.002 &8 &\emph{0.0016}& 2e-15 &9 &0.0025& 2e-08 &7 &0.0022& 3e-07 &7 &\textbf{0.0011}& 3e-07 \\ 
$\texttt{Franz6}$ & 7576/3016 & 3016/0.002 &7 &\emph{0.0014}& 2e-16 &10 &0.0028& 1e-09 &6 &0.0021& 9e-17 &6 &\textbf{0.00097}& 5e-16 \\ 
$\texttt{Franz7}$ & 10164/1740 & 1740/0.002 &2 &\emph{0.0004}& 4e-16 &4 &0.00088& 5e-17 &1 &0.00075& 3e-15 &1 &\textbf{0.00031}& 3e-15 \\ 
$\texttt{Franz8}$ & 16728/7176 & 7176/0.0008 &12 &\emph{0.0036}& 7e-08 &12 &0.0056& 6e-08 &11 &0.0048& 4e-07 &11 &\textbf{0.0035}& 4e-07 \\ 
$\texttt{Franz9}$ & 19588/4164 & 4164/0.001 &14 &0.0055& 6e-13 &10 &\emph{0.0054}& 2e-08 &13 &0.008& 7e-07 &13 &\textbf{0.0052}& 7e-07 \\ 
$\texttt{Franz10}$ & 19588/4164 & 4164/0.001 &14 &0.0063& 6e-13 &10 &\emph{0.0059}& 2e-08 &13 &0.0083& 7e-07 &13 &\textbf{0.005}& 7e-07 \\ 
$\texttt{GL7d12}$ & 8899/1019 & 1019/0.004 &48 &0.0094& 6e-07 &22 &\textbf{0.0047}& 3e-07 &45 &0.011& 9e-07 &46 &\emph{0.0087}& 9e-07 \\ 
$\texttt{GL7d13}$ & 47271/8899 & 8897/0.0008 &57 &0.097& 7e-07 &30 &\textbf{0.056}& 5e-07 &55 &0.1& 8e-07 &55 &\emph{0.093}& 1e-06 \\ 
$\texttt{ch6-6-b3}$ & 5400/2400 & 2400/0.002 &10 &\textbf{0.0011}& 2e-17 &10 &0.0013& 2e-17 &9 &0.0023& 2e-16 &9 &\emph{0.0011}& 3e-17 \\ 
$\texttt{ch7-6-b3}$ & 12600/4200 & 4200/0.001 &11 &\emph{0.0029}& 3e-08 &11 &0.0038& 3e-08 &10 &0.0035& 2e-07 &10 &\textbf{0.0025}& 2e-07 \\ 
$\texttt{ch7-8-b2}$ & 11760/1176 & 1176/0.003 &7 &\textbf{0.0013}& 1e-08 &7 &0.0016& 1e-08 &6 &0.0022& 3e-07 &6 &\emph{0.0014}& 3e-07 \\ 
$\texttt{ch7-9-b2}$ & 17640/1512 & 1512/0.002 &7 &\emph{0.0019}& 5e-17 &7 &0.0026& 1e-16 &6 &0.0027& 3e-07 &6 &\textbf{0.0018}& 3e-07 \\ 
$\texttt{ch8-8-b2}$ & 18816/1568 & 1568/0.002 &5 &\emph{0.0014}& 8e-16 &5 &0.0021& 7e-16 &4 &0.0025& 7e-16 &4 &\textbf{0.0012}& 6e-16 \\ 
$\texttt{cis-n4c6-b3}$ & 5970/1330 & 1330/0.003 &3 &\emph{0.0004}& 1e-16 &3 &0.00044& 2e-17 &2 &0.00075& 5e-16 &2 &\textbf{0.00027}& 5e-16 \\ 
$\texttt{cis-n4c6-b4}$ & 20058/5970 & 5970/0.0008 &8 &\emph{0.0032}& 8e-09 &8 &0.0049& 6e-08 &7 &0.0049& 3e-07 &7 &\textbf{0.0031}& 3e-07 \\ 
$\texttt{mk10-b3}$ & 4725/3150 & 3150/0.001 &8 &\emph{0.0011}& 1e-16 &8 &0.0015& 2e-16 &7 &0.0016& 7e-16 &7 &\textbf{0.0008}& 6e-16 \\ 
$\texttt{mk11-b3}$ & 17325/6930 & 6930/0.0006 &7 &\emph{0.0022}& 2e-16 &7 &0.0033& 2e-16 &6 &0.0031& 4e-16 &6 &\textbf{0.0022}& 4e-16 \\ 
$\texttt{mk12-b2}$ & 13860/1485 & 1485/0.002 &5 &\textbf{0.0011}& 4e-16 &5 &0.0013& 5e-16 &4 &0.0019& 8e-16 &4 &\emph{0.0011}& 1e-15 \\ 
$\texttt{n2c6-b4}$ & 3003/1365 & 1365/0.004 &2 &\textbf{0.00015}& 3e-14 &2 &0.00029& 3e-14 &1 &0.00053& 4e-14 &1 &\emph{0.00018}& 4e-14 \\ 
$\texttt{n2c6-b5}$ & 4945/3003 & 3003/0.002 &2 &\emph{0.00037}& 2e-16 &2 &0.00054& 2e-16 &1 &0.00058& 4e-16 &1 &\textbf{0.00023}& 2e-16 \\ 
$\texttt{n2c6-b6}$ & 5715/4945 & 4945/0.001 &14 &\emph{0.0034}& 3e-07 &14 &0.0041& 2e-07 &13 &0.0057& 7e-07 &13 &\textbf{0.003}& 8e-07 \\ 
$\texttt{n3c6-b4}$ & 3003/1365 & 1365/0.004 &2 &\textbf{0.00016}& 3e-14 &2 &0.00028& 3e-14 &1 &0.00054& 4e-14 &1 &\emph{0.00019}& 4e-14 \\ 
$\texttt{n3c6-b5}$ & 5005/3003 & 3003/0.002 &2 &\emph{0.00037}& 2e-16 &2 &0.00082& 2e-16 &1 &0.00058& 4e-16 &1 &\textbf{0.00021}& 2e-16 \\ 
$\texttt{n3c6-b6}$ & 6435/5005 & 5005/0.001 &2 &\emph{0.00053}& 6e-14 &2 &0.0012& 6e-14 &1 &0.00099& 1e-13 &1 &\textbf{0.00039}& 1e-13 \\ 
$\texttt{n4c5-b4}$ & 2852/1350 & 1350/0.004 &9 &\textbf{0.00065}& 1e-07 &9 &0.00096& 1e-07 &8 &0.0013& 6e-07 &8 &\emph{0.00081}& 6e-07 \\ 
$\texttt{n4c5-b5}$ & 4340/2852 & 2852/0.002 &10 &\emph{0.0018}& 8e-08 &10 &0.0019& 5e-08 &9 &0.0024& 5e-07 &9 &\textbf{0.0013}& 5e-07 \\ 
$\texttt{n4c5-b6}$ & 4735/4340 & 4340/0.002 &11 &\emph{0.0023}& 2e-07 &11 &0.0032& 1e-07 &10 &0.0037& 9e-07 &10 &\textbf{0.002}& 1e-06 \\ 
$\texttt{n4c6-b3}$ & 5970/1330 & 1330/0.003 &3 &\emph{0.0003}& 1e-16 &3 &0.00045& 2e-17 &2 &0.00067& 5e-16 &2 &\textbf{0.00027}& 5e-16 \\ 
$\texttt{n4c6-b4}$ & 20058/5970 & 5970/0.0008 &8 &\emph{0.0032}& 8e-09 &8 &0.0047& 6e-08 &7 &0.0051& 3e-07 &7 &\textbf{0.0031}& 3e-07 \\ 
$\texttt{rel7}$ & 21924/1045 & 1043/0.002 &0 &\textbf{0.0002}& 0 &0 &0.00087& 0 &0 &0.00039& 0 &0 &\emph{0.0003}& 0 \\ 
$\texttt{relat7b}$ & 21924/1045 & 1043/0.004 &0 &\textbf{0.00025}& 0 &0 &0.0012& 0 &0 &0.00044& 0 &0 &\emph{0.00027}& 0 \\ 
$\texttt{relat7}$ & 21924/1045 & 1043/0.004 &0 &\emph{0.00028}& 0 &0 &0.00053& 0 &0 &0.0004& 0 &0 &\textbf{0.00028}& 0 \\ 
$\texttt{mesh\_deform}$ & 234023/9393 & 9393/0.0004 &1040 &3.5& 8e-07 &424 &\textbf{1.4}& 9e-07 &922 &3.1& 1e-06 &942 &\emph{2.9}& 1e-06 \\ 
$\texttt{162bit}$ & 3606/3597 & 3460/0.003 &2174 &0.48& 9e-07 &422 &\textbf{0.1}& 1e-06 &1597 &0.47& 1e-06 &1685 &\emph{0.34}& 1e-06 \\ 
$\texttt{176bit}$ & 7441/7431 & 7110/0.001 &3268 &1.3& 9e-07 &423 &\textbf{0.18}& 1e-06 &2369 &1.3& 1e-06 &2490 &\emph{0.98}& 1e-06 \\ 
$\texttt{specular}$ & 477976/1600 & 1442/0.01 &$\textnormal{2600}^{\dagger}$ &$\textnormal{\emph{42}}^{\dagger}$& $ \textnormal{0.0003}^{\dagger}$ &118 &\textbf{1.9}& 8e-07 &$\textnormal{2600}^{\dagger}$ &$\textnormal{43}^{\dagger}$& $ \textnormal{2e-05}^{\dagger}$ &$\textnormal{2600}^{\dagger}$ &$\textnormal{43}^{\dagger}$& $ \textnormal{2e-05}^{\dagger}$ \\ 
 \hline
 \end{tabular}
 \hss}
\end{table}

\subsection{Experiment III}
\label{sec:EXII}

\jbbR{In this experiment the matrices are large with $m>n$ and $10^4 \le n \le 10^7$.
The right-hand side $\bbold$ and starting vector $\x_0$ are initialized as in Experiment I. Because computing full random normal sketching matrices is not feasible for
these large matrices, we use $\texttt{sprandn}$ instead of $\texttt{randn}$ in a randomized implementation
of \eqref{eq:pkLong}. The parameter $r$ is set as follows: If $n>10^5$ then $r=5$ else $r=50$. 
Convergence is determined if $\|\A\xk-\bbold\|_2 \le \epsilon $ with $\epsilon=10^{-2}$. The iteration limit is 500 and outcomes are reported in Table \ref{tab:exp2}.}

\begin{table}[p]  
 \captionsetup{width=1\linewidth}
\caption{\jbbR{Experiment III compares 4 solvers on 51 linear systems from the SuiteSparse Matrix Collection \cite{SuiteSparseMatrix} with stopping tolerance $\epsilon = 10^{-2}$ and iteration limit $500$. 
\jbrv{In column 3, ``Rank'' is the structural rank of the matrix and} ``Dty" is the density of a particular matrix $ \b{A} $ calculated as $ \text{Dty} = \frac{\text{nnz}(\b{A})}{m \cdot n} $.
Entries with 
superscript $^{\dagger}$ 
denote problems for which the solver did not converge to the specified tolerance. 
Bold entries mark the
fastest times, while second fastest times are italicized.
}}
\label{tab:exp2}

\setlength{\tabcolsep}{2pt} 

\scriptsize
\hbox to 1.00\textwidth{\hss 
\begin{tabular}{|l | c | c | c c c | c c c | c c c | c c c |} 
		\hline
		\multirow{2}{*}{Problem} & \multirow{2}{*}{$m$/$n$} & \multirow{2}{*}{\text{\jbrv{Rank}}/\text{Dty}} & 
        \multicolumn{3}{c|}{\jbbo{Rand. Proj.}}  & 
		\multicolumn{3}{c|}{\jbbo{PLSS}} &  		
		\multicolumn{3}{c|}{\jbbo{PLSS W}} &  	
		\multicolumn{3}{c|}{\jbbo{LSQR}} 	\\
		\cline{4-15}
		& & 
		& It &  Sec & Res
		& It &  Sec & Res
		& It &  Sec & Res
		& It &  Sec & Res \\
		\hline
$\texttt{graphics}$ & 29493/11822 & 11822/0.0003 &$\textnormal{500}^{\dagger}$ &$\textnormal{10}^{\dagger}$& $ \textnormal{2}^{\dagger}$ &$\textnormal{500}^{\dagger}$ &$\textnormal{\textbf{0.18}}^{\dagger}$& $ \textnormal{0.05}^{\dagger}$ &$\textnormal{500}^{\dagger}$ &$\textnormal{\emph{0.2}}^{\dagger}$& $ \textnormal{0.01}^{\dagger}$ &$\textnormal{500}^{\dagger}$ &$\textnormal{0.22}^{\dagger}$& $ \textnormal{0.003}^{\dagger}$ \\ 
$\texttt{deltaX}$ & 68600/21961 & 21961/0.0002 &$\textnormal{500}^{\dagger}$ &$\textnormal{25}^{\dagger}$& $ \textnormal{0.08}^{\dagger}$ &$\textnormal{500}^{\dagger}$ &$\textnormal{\textbf{0.43}}^{\dagger}$& $ \textnormal{0.0002}^{\dagger}$ &$\textnormal{500}^{\dagger}$ &$\textnormal{\emph{0.46}}^{\dagger}$& $ \textnormal{5e-05}^{\dagger}$ &$\textnormal{500}^{\dagger}$ &$\textnormal{0.51}^{\dagger}$& $ \textnormal{2e-05}^{\dagger}$ \\ 
$\texttt{NotreDame\_actors}$ & 392400/127823 & 114762/3e-05 &$\textnormal{500}^{\dagger}$ &$\textnormal{2.5e+02}^{\dagger}$& $ \textnormal{0.1}^{\dagger}$ &$\textnormal{500}^{\dagger}$ &$\textnormal{\textbf{3.9}}^{\dagger}$& $ \textnormal{0.001}^{\dagger}$ &$\textnormal{500}^{\dagger}$ &$\textnormal{4.2}^{\dagger}$& $ \textnormal{1e-05}^{\dagger}$ &$\textnormal{500}^{\dagger}$ &$\textnormal{\emph{4.2}}^{\dagger}$& $ \textnormal{0.0001}^{\dagger}$ \\ 
$\texttt{ESOC}$ & 327062/37830 & 37349/0.0005 &$\textnormal{500}^{\dagger}$ &$\textnormal{2.8e+02}^{\dagger}$& $ \textnormal{0.09}^{\dagger}$ &$\textnormal{500}^{\dagger}$ &$\textnormal{\emph{8.9}}^{\dagger}$& $ \textnormal{0.05}^{\dagger}$ &$\textnormal{500}^{\dagger}$ &$\textnormal{9}^{\dagger}$& $ \textnormal{0.03}^{\dagger}$ &$\textnormal{500}^{\dagger}$ &$\textnormal{\textbf{8.7}}^{\dagger}$& $ \textnormal{0.003}^{\dagger}$ \\ 
$\texttt{psse0}$ & 26722/11028 & 11028/0.0003 &$\textnormal{500}^{\dagger}$ &$\textnormal{9.5}^{\dagger}$& $ \textnormal{1}^{\dagger}$ &$\textnormal{500}^{\dagger}$ &$\textnormal{\textbf{0.19}}^{\dagger}$& $ \textnormal{0.8}^{\dagger}$ &$\textnormal{500}^{\dagger}$ &$\textnormal{\emph{0.21}}^{\dagger}$& $ \textnormal{0.6}^{\dagger}$ &$\textnormal{500}^{\dagger}$ &$\textnormal{0.22}^{\dagger}$& $ \textnormal{0.06}^{\dagger}$ \\ 
$\texttt{psse1}$ & 14318/11028 & 11028/0.0004 &$\textnormal{500}^{\dagger}$ &$\textnormal{5.8}^{\dagger}$& $ \textnormal{1}^{\dagger}$ &$\textnormal{500}^{\dagger}$ &$\textnormal{\textbf{0.15}}^{\dagger}$& $ \textnormal{6}^{\dagger}$ &$\textnormal{500}^{\dagger}$ &$\textnormal{\emph{0.18}}^{\dagger}$& $ \textnormal{1}^{\dagger}$ &$\textnormal{500}^{\dagger}$ &$\textnormal{0.19}^{\dagger}$& $ \textnormal{0.07}^{\dagger}$ \\ 
$\texttt{psse2}$ & 28634/11028 & 11028/0.0004 &$\textnormal{500}^{\dagger}$ &$\textnormal{11}^{\dagger}$& $ \textnormal{1}^{\dagger}$ &$\textnormal{500}^{\dagger}$ &$\textnormal{\textbf{0.22}}^{\dagger}$& $ \textnormal{6}^{\dagger}$ &$\textnormal{500}^{\dagger}$ &$\textnormal{\emph{0.25}}^{\dagger}$& $ \textnormal{0.4}^{\dagger}$ &$\textnormal{500}^{\dagger}$ &$\textnormal{0.25}^{\dagger}$& $ \textnormal{0.08}^{\dagger}$ \\ 
$\texttt{Rucci1}$ & 1977885/109900 & 109900/4e-05 &$\textnormal{500}^{\dagger}$ &$\textnormal{1e+03}^{\dagger}$& $ \textnormal{1}^{\dagger}$ &$\textnormal{500}^{\dagger}$ &$\textnormal{\emph{17}}^{\dagger}$& $ \textnormal{3}^{\dagger}$ &$\textnormal{500}^{\dagger}$ &$\textnormal{17}^{\dagger}$& $ \textnormal{2}^{\dagger}$ &$\textnormal{388}^{\dagger}$ &$\textnormal{\textbf{13}}^{\dagger}$& $ \textnormal{0.2}^{\dagger}$ \\ 
$\texttt{Maragal\_6}$ & 21255/10152 & 10052/0.002 &$\textnormal{500}^{\dagger}$ &$\textnormal{12}^{\dagger}$& $ \textnormal{0.4}^{\dagger}$ &$\textnormal{500}^{\dagger}$ &$\textnormal{\textbf{0.46}}^{\dagger}$& $ \textnormal{0.01}^{\dagger}$ &$\textnormal{500}^{\dagger}$ &$\textnormal{0.51}^{\dagger}$& $ \textnormal{0.002}^{\dagger}$ &$\textnormal{500}^{\dagger}$ &$\textnormal{\emph{0.47}}^{\dagger}$& $ \textnormal{0.001}^{\dagger}$ \\ 
$\texttt{Maragal\_7}$ & 46845/26564 & 25866/0.001 &$\textnormal{500}^{\dagger}$ &$\textnormal{29}^{\dagger}$& $ \textnormal{0.6}^{\dagger}$ &$\textnormal{500}^{\dagger}$ &$\textnormal{\emph{1.3}}^{\dagger}$& $ \textnormal{0.01}^{\dagger}$ &$\textnormal{500}^{\dagger}$ &$\textnormal{\textbf{1.3}}^{\dagger}$& $ \textnormal{0.003}^{\dagger}$ &$\textnormal{500}^{\dagger}$ &$\textnormal{1.4}^{\dagger}$& $ \textnormal{0.001}^{\dagger}$ \\ 
$\texttt{Franz11}$ & 47104/30144 & 30144/0.0002 &$\textnormal{500}^{\dagger}$ &$\textnormal{29}^{\dagger}$& $ \textnormal{0.3}^{\dagger}$ &7 &\textbf{0.023}& 6e-06 &5 &0.029& 4e-05 &7 &\emph{0.024}& 6e-06 \\ 
$\texttt{IG5-16}$ & 18846/18485 & 9519/0.002 &$\textnormal{500}^{\dagger}$ &$\textnormal{23}^{\dagger}$& $ \textnormal{0.02}^{\dagger}$ &$\textnormal{500}^{\dagger}$ &$\textnormal{\textbf{1.3}}^{\dagger}$& $ \textnormal{0.001}^{\dagger}$ &$\textnormal{500}^{\dagger}$ &$\textnormal{\emph{1.4}}^{\dagger}$& $ \textnormal{1e-05}^{\dagger}$ &$\textnormal{500}^{\dagger}$ &$\textnormal{1.5}^{\dagger}$& $ \textnormal{0.0001}^{\dagger}$ \\ 
$\texttt{IG5-17}$ & 30162/27944 & 14060/0.001 &$\textnormal{500}^{\dagger}$ &$\textnormal{49}^{\dagger}$& $ \textnormal{0.03}^{\dagger}$ &$\textnormal{500}^{\dagger}$ &$\textnormal{2}^{\dagger}$& $ \textnormal{0.001}^{\dagger}$ &$\textnormal{500}^{\dagger}$ &$\textnormal{\emph{1.6}}^{\dagger}$& $ \textnormal{9e-06}^{\dagger}$ &$\textnormal{500}^{\dagger}$ &$\textnormal{\textbf{1.3}}^{\dagger}$& $ \textnormal{0.0001}^{\dagger}$ \\ 
$\texttt{IG5-18}$ & 47894/41550 & 20818/0.0009 &$\textnormal{500}^{\dagger}$ &$\textnormal{43}^{\dagger}$& $ \textnormal{0.04}^{\dagger}$ &$\textnormal{500}^{\dagger}$ &$\textnormal{\textbf{2}}^{\dagger}$& $ \textnormal{0.001}^{\dagger}$ &$\textnormal{500}^{\dagger}$ &$\textnormal{2.3}^{\dagger}$& $ \textnormal{1e-05}^{\dagger}$ &$\textnormal{500}^{\dagger}$ &$\textnormal{\emph{2.2}}^{\dagger}$& $ \textnormal{0.0001}^{\dagger}$ \\ 
$\texttt{GL7d14}$ & 171375/47271 & 47266/0.0002 &$\textnormal{500}^{\dagger}$ &$\textnormal{1.1e+02}^{\dagger}$& $ \textnormal{0.7}^{\dagger}$ &43 &\emph{0.33}& 7e-06 &33 &\textbf{0.29}& 7e-06 &41 &0.33& 6e-06 \\ 
$\texttt{GL7d15}$ & 460261/171375 & 171373/8e-05 &$\textnormal{500}^{\dagger}$ &$\textnormal{4.4e+02}^{\dagger}$& $ \textnormal{0.9}^{\dagger}$ &59 &\emph{1.6}& 3e-06 &47 &\textbf{1.4}& 3e-06 &57 &1.7& 4e-06 \\ 
$\texttt{GL7d16}$ & 955128/460261 & 460091/3e-05 &$\textnormal{500}^{\dagger}$ &$\textnormal{1.4e+03}^{\dagger}$& $ \textnormal{1}^{\dagger}$ &58 &\emph{7.8}& 2e-06 &43 &\textbf{6.2}& 2e-06 &57 &8.1& 2e-06 \\ 
$\texttt{GL7d17}$ & 1548650/955128 & 954861/2e-05 &$\textnormal{500}^{\dagger}$ &$\textnormal{2.6e+03}^{\dagger}$& $ \textnormal{1}^{\dagger}$ &56 &\emph{12}& 1e-06 &45 &\textbf{11}& 2e-06 &55 &13& 2e-06 \\ 
$\texttt{GL7d18}$ & 1955309/1548650 & 1548499/1e-05 &$\textnormal{500}^{\dagger}$ &$\textnormal{3.6e+03}^{\dagger}$& $ \textnormal{1}^{\dagger}$ &74 &\emph{23}& 2e-06 &63 &\textbf{21}& 1e-06 &71 &23& 2e-06 \\ 
$\texttt{ch7-6-b4}$ & 15120/12600 & 12600/0.0004 &$\textnormal{500}^{\dagger}$ &$\textnormal{5.8}^{\dagger}$& $ \textnormal{0.2}^{\dagger}$ &13 &\textbf{0.0034}& 3e-05 &13 &0.0073& 3e-05 &12 &\emph{0.0043}& 8e-05 \\ 
$\texttt{ch7-8-b3}$ & 58800/11760 & 11760/0.0003 &$\textnormal{500}^{\dagger}$ &$\textnormal{21}^{\dagger}$& $ \textnormal{0.3}^{\dagger}$ &5 &\textbf{0.0037}& 0.0002 &5 &\emph{0.0049}& 0.0002 &5 &0.0052& 0.0002 \\ 
$\texttt{ch7-8-b4}$ & 141120/58800 & 58800/9e-05 &$\textnormal{500}^{\dagger}$ &$\textnormal{59}^{\dagger}$& $ \textnormal{0.7}^{\dagger}$ &9 &\textbf{0.019}& 2e-05 &9 &0.031& 2e-05 &9 &\emph{0.022}& 1e-05 \\ 
$\texttt{ch7-9-b3}$ & 105840/17640 & 17640/0.0002 &$\textnormal{500}^{\dagger}$ &$\textnormal{38}^{\dagger}$& $ \textnormal{0.5}^{\dagger}$ &5 &\textbf{0.0065}& 0.0001 &5 &0.013& 0.0001 &5 &\emph{0.008}& 0.0001 \\ 
$\texttt{ch7-9-b4}$ & 317520/105840 & 105840/5e-05 &$\textnormal{500}^{\dagger}$ &$\textnormal{1.3e+02}^{\dagger}$& $ \textnormal{0.8}^{\dagger}$ &9 &\textbf{0.046}& 8e-06 &9 &0.068& 8e-06 &9 &\emph{0.054}& 8e-06 \\ 
$\texttt{ch7-9-b5}$ & 423360/317520 & 317520/2e-05 &$\textnormal{500}^{\dagger}$ &$\textnormal{2.3e+02}^{\dagger}$& $ \textnormal{0.9}^{\dagger}$ &12 &\textbf{0.1}& 0.0002 &12 &0.16& 0.0002 &11 &\emph{0.13}& 0.0004 \\ 
$\texttt{ch8-8-b3}$ & 117600/18816 & 18816/0.0002 &$\textnormal{500}^{\dagger}$ &$\textnormal{47}^{\dagger}$& $ \textnormal{0.5}^{\dagger}$ &5 &\textbf{0.008}& 5e-05 &5 &0.013& 5e-05 &5 &\emph{0.011}& 5e-05 \\ 
$\texttt{ch8-8-b4}$ & 376320/117600 & 117600/4e-05 &$\textnormal{500}^{\dagger}$ &$\textnormal{2.1e+02}^{\dagger}$& $ \textnormal{0.8}^{\dagger}$ &8 &\textbf{0.05}& 5e-06 &8 &0.074& 5e-06 &8 &\emph{0.059}& 5e-06 \\ 
$\texttt{ch8-8-b5}$ & 564480/376320 & 376320/2e-05 &$\textnormal{500}^{\dagger}$ &$\textnormal{3.7e+02}^{\dagger}$& $ \textnormal{1}^{\dagger}$ &9 &\textbf{0.12}& 0.0003 &9 &0.18& 0.0003 &9 &\emph{0.17}& 0.0003 \\ 
$\texttt{D6-6}$ & 120576/23740 & 18660/5e-05 &$\textnormal{500}^{\dagger}$ &$\textnormal{44}^{\dagger}$& $ \textnormal{0.5}^{\dagger}$ &18 &\textbf{0.017}& 4e-05 &18 &0.023& 3e-05 &18 &\emph{0.02}& 3e-05 \\ 
$\texttt{mk12-b3}$ & 51975/13860 & 13860/0.0003 &$\textnormal{500}^{\dagger}$ &$\textnormal{20}^{\dagger}$& $ \textnormal{0.4}^{\dagger}$ &6 &\textbf{0.0045}& 3e-05 &6 &\emph{0.0058}& 3e-05 &6 &0.0064& 3e-05 \\ 
$\texttt{mk12-b4}$ & 62370/51975 & 51975/0.0001 &$\textnormal{500}^{\dagger}$ &$\textnormal{30}^{\dagger}$& $ \textnormal{0.6}^{\dagger}$ &11 &\textbf{0.012}& 3e-16 &11 &0.017& 3e-16 &11 &\emph{0.015}& 1e-14 \\ 
$\texttt{n4c6-b5}$ & 51813/20058 & 20058/0.0003 &$\textnormal{500}^{\dagger}$ &$\textnormal{22}^{\dagger}$& $ \textnormal{0.5}^{\dagger}$ &2 &\textbf{0.0025}& 2e-16 &4 &0.0065& 9e-05 &2 &\emph{0.0028}& 1e-15 \\ 
$\texttt{n4c6-b6}$ & 104115/51813 & 51813/0.0001 &$\textnormal{500}^{\dagger}$ &$\textnormal{50}^{\dagger}$& $ \textnormal{0.7}^{\dagger}$ &6 &\textbf{0.014}& 2e-05 &6 &0.024& 2e-05 &6 &\emph{0.018}& 2e-05 \\ 
$\texttt{n4c6-b7}$ & 163215/104115 & 104115/8e-05 &$\textnormal{500}^{\dagger}$ &$\textnormal{78}^{\dagger}$& $ \textnormal{0.9}^{\dagger}$ &5 &\textbf{0.022}& 0.0001 &5 &0.039& 8e-05 &5 &\emph{0.027}& 0.0001 \\ 
$\texttt{n4c6-b8}$ & 198895/163215 & 163215/6e-05 &$\textnormal{500}^{\dagger}$ &$\textnormal{1e+02}^{\dagger}$& $ \textnormal{0.9}^{\dagger}$ &9 &\textbf{0.052}& 8e-06 &9 &0.084& 7e-06 &9 &\emph{0.067}& 8e-06 \\ 
$\texttt{shar\_te2-b2}$ & 200200/17160 & 17160/0.0002 &$\textnormal{500}^{\dagger}$ &$\textnormal{73}^{\dagger}$& $ \textnormal{0.4}^{\dagger}$ &7 &\textbf{0.015}& 1e-05 &7 &0.02& 1e-05 &7 &\emph{0.015}& 1e-05 \\ 
$\texttt{kneser\_10\_4\_1}$ & 349651/330751 & 323401/9e-06 &$\textnormal{500}^{\dagger}$ &$\textnormal{2e+02}^{\dagger}$& $ \textnormal{0.6}^{\dagger}$ &32 &\textbf{0.19}& 2e-06 &31 &\emph{0.25}& 2e-06 &31 &0.25& 2e-06 \\ 
$\texttt{kneser\_8\_3\_1}$ & 15737/15681 & 14897/0.0002 &$\textnormal{500}^{\dagger}$ &$\textnormal{6.3}^{\dagger}$& $ \textnormal{0.09}^{\dagger}$ &28 &\textbf{0.0063}& 1e-05 &26 &0.01& 1e-05 &26 &\emph{0.0082}& 2e-05 \\ 
$\texttt{wheel\_601}$ & 902103/723605 & 723005/3e-06 &$\textnormal{500}^{\dagger}$ &$\textnormal{5.9e+02}^{\dagger}$& $ \textnormal{0.1}^{\dagger}$ &42 &\textbf{0.59}& 6e-07 &42 &\emph{0.78}& 6e-07 &42 &0.87& 7e-07 \\ 
$\texttt{rel8}$ & 345688/12347 & 12345/0.0002 &0 &0.28& 0 &0 &\textbf{0.0031}& 0 &0 &0.008& 0 &0 &\emph{0.0075}& 0 \\ 
$\texttt{rel9}$ & 9888048/274669 & 274667/9e-06 &0 &9.6& 0 &0 &\textbf{0.21}& 0 &0 &0.49& 0 &0 &\emph{0.23}& 0 \\ 
$\texttt{relat8}$ & 345688/12347 & 12345/0.0003 &0 &0.32& 0 &0 &\textbf{0.0044}& 0 &0 &0.029& 0 &0 &\emph{0.0046}& 0 \\ 
$\texttt{relat9}$ & 12360060/549336 & 274667/6e-06 &0 &13& 0 &0 &\textbf{0.4}& 0 &0 &0.85& 0 &0 &\emph{0.41}& 0 \\ 
$\texttt{sls}$ & 1748122/62729 & 62729/6e-05 &$\textnormal{500}^{\dagger}$ &$\textnormal{9.6e+02}^{\dagger}$& $ \textnormal{0.08}^{\dagger}$ &392 &16& 3e-06 &130 &\textbf{5.2}& 2e-06 &334 &\emph{13}& 3e-06 \\ 
$\texttt{image\_interp}$ & 240000/120000 & 120000/2e-05 &$\textnormal{500}^{\dagger}$ &$\textnormal{1e+02}^{\dagger}$& $ \textnormal{1}^{\dagger}$ &$\textnormal{500}^{\dagger}$ &$\textnormal{\textbf{1.3}}^{\dagger}$& $ \textnormal{0.3}^{\dagger}$ &$\textnormal{500}^{\dagger}$ &$\textnormal{\emph{1.5}}^{\dagger}$& $ \textnormal{0.3}^{\dagger}$ &$\textnormal{500}^{\dagger}$ &$\textnormal{1.7}^{\dagger}$& $ \textnormal{0.02}^{\dagger}$ \\ 
$\texttt{192bit}$ & 13691/13682 & 13006/0.0008 &$\textnormal{500}^{\dagger}$ &$\textnormal{7.5}^{\dagger}$& $ \textnormal{0.06}^{\dagger}$ &$\textnormal{500}^{\dagger}$ &$\textnormal{\emph{0.24}}^{\dagger}$& $ \textnormal{0.007}^{\dagger}$ &329 &\textbf{0.18}& 5e-06 &$\textnormal{500}^{\dagger}$ &$\textnormal{0.31}^{\dagger}$& $ \textnormal{0.0004}^{\dagger}$ \\ 
$\texttt{208bit}$ & 24430/24421 & 22981/0.0005 &$\textnormal{500}^{\dagger}$ &$\textnormal{16}^{\dagger}$& $ \textnormal{0.08}^{\dagger}$ &$\textnormal{500}^{\dagger}$ &$\textnormal{\emph{0.49}}^{\dagger}$& $ \textnormal{0.007}^{\dagger}$ &304 &\textbf{0.33}& 3e-06 &$\textnormal{500}^{\dagger}$ &$\textnormal{0.59}^{\dagger}$& $ \textnormal{0.0007}^{\dagger}$ \\ 
$\texttt{tomographic1}$ & 73159/59498 & 42208/0.0001 &$\textnormal{500}^{\dagger}$ &$\textnormal{39}^{\dagger}$& $ \textnormal{0.1}^{\dagger}$ &$\textnormal{500}^{\dagger}$ &$\textnormal{\textbf{0.95}}^{\dagger}$& $ \textnormal{0.001}^{\dagger}$ &$\textnormal{500}^{\dagger}$ &$\textnormal{\emph{1.1}}^{\dagger}$& $ \textnormal{0.0008}^{\dagger}$ &$\textnormal{500}^{\dagger}$ &$\textnormal{1.2}^{\dagger}$& $ \textnormal{9e-05}^{\dagger}$ \\ 
$\texttt{LargeRegFile}$ & 2111154/801374 & 801374/3e-06 &$\textnormal{500}^{\dagger}$ &$\textnormal{1.1e+03}^{\dagger}$& $ \textnormal{0.6}^{\dagger}$ &$\textnormal{500}^{\dagger}$ &$\textnormal{\emph{18}}^{\dagger}$& $ \textnormal{0.4}^{\dagger}$ &53 &\textbf{2.1}& 4e-06 &$\textnormal{500}^{\dagger}$ &$\textnormal{20}^{\dagger}$& $ \textnormal{0.004}^{\dagger}$ \\ 
$\texttt{JP}$ & 87616/67320 & 26137/0.002 &$\textnormal{500}^{\dagger}$ &$\textnormal{2.3e+02}^{\dagger}$& $ \textnormal{0.03}^{\dagger}$ &$\textnormal{500}^{\dagger}$ &$\textnormal{\emph{17}}^{\dagger}$& $ \textnormal{0.009}^{\dagger}$ &$\textnormal{500}^{\dagger}$ &$\textnormal{17}^{\dagger}$& $ \textnormal{0.002}^{\dagger}$ &$\textnormal{500}^{\dagger}$ &$\textnormal{\textbf{17}}^{\dagger}$& $ \textnormal{0.0005}^{\dagger}$ \\ 
$\texttt{Hardesty2}$ & 929901/303645 & 303645/1e-05 &$\textnormal{500}^{\dagger}$ &$\textnormal{4.4e+02}^{\dagger}$& $ \textnormal{1}^{\dagger}$ &$\textnormal{500}^{\dagger}$ &$\textnormal{\textbf{6.4}}^{\dagger}$& $ \textnormal{0.03}^{\dagger}$ &$\textnormal{500}^{\dagger}$ &$\textnormal{\emph{7}}^{\dagger}$& $ \textnormal{0.02}^{\dagger}$ &$\textnormal{500}^{\dagger}$ &$\textnormal{7.3}^{\dagger}$& $ \textnormal{0.002}^{\dagger}$ \\
  \hline
 \end{tabular}
 \hss}
\end{table} 



\subsection{Experiment IV}
\label{sec:EXIV}

\jbbRt{This experiment is on underdetermined systems $m<n$. 
The right-hand side $\bbold$, starting vector $\x_0$ and random sketching matrix are computed as in Experiment I.
\jbrv{The condition number of each matrix is in Table \ref{tab:cond}.}
Convergence is determined if $\|\A\xk-\bbold\|_2 \le \epsilon $ with $\epsilon=10^{-4}$. The iteration limit is $n+1500$ and outcomes are reported in Table~\ref{tab:exp3}.}

\begin{table}[p]  
 \captionsetup{width=1\linewidth}
\caption{\jbbRt{Experiment III compares 4 solvers on 42 linear systems from the SuiteSparse Matrix Collection \cite{SuiteSparseMatrix} with stopping tolerance $\epsilon = 10^{-4}$ and iteration limit $n+1500$. 
\jbrv{In column 3, ``Rank'' is the structural rank of the matrix and} ``Dty" is the density of a particular matrix $ \b{A} $ calculated as $ \text{Dty} = \frac{\text{nnz}(\b{A})}{m \cdot n} $.
Entries with 
superscript $^{\dagger}$ 
denote problems for which the solver did not converge to the specified tolerance.
Bold entries mark the
fastest times, while second fastest times are italicized.
}}
\label{tab:exp3}

\setlength{\tabcolsep}{2pt} 

\scriptsize
\hbox to 1.00\textwidth{\hss 
\begin{tabular}{|l | c | c | c c c | c c c | c c c | c c c |} 
		\hline
		\multirow{2}{*}{Problem} & \multirow{2}{*}{$m$/$n$} & \multirow{2}{*}{\text{\jbrv{Rank}}/\text{Dty}} & 
        \multicolumn{3}{c|}{\jbbo{Rand. Proj.}}  & 
		\multicolumn{3}{c|}{\jbbo{PLSS}} &  		
		\multicolumn{3}{c|}{\jbbo{PLSS W}} &  	
		\multicolumn{3}{c|}{\jbbo{CRAIG}} 	\\
		\cline{4-15}
		& & 
		& It &  Sec & Res
		& It &  Sec & Res
		& It &  Sec & Res
		& It &  Sec & Res \\
		\hline
$\texttt{lp\_25fv47}$ & 821/1876 & 820/0.007 &$\textnormal{3376}^{\dagger}$ &$\textnormal{0.9}^{\dagger}$& $ \textnormal{0.04}^{\dagger}$ &$\textnormal{3376}^{\dagger}$ &$\textnormal{\emph{0.1}}^{\dagger}$& $ \textnormal{4e-05}^{\dagger}$ &1697 &\textbf{0.065}& 4e-08 &$\textnormal{3376}^{\dagger}$ &$\textnormal{0.11}^{\dagger}$& $ \textnormal{0.0001}^{\dagger}$ \\ 
$\texttt{lp\_bnl1}$ & 643/1586 & 642/0.005 &$\textnormal{3086}^{\dagger}$ &$\textnormal{0.58}^{\dagger}$& $ \textnormal{0.01}^{\dagger}$ &1916 &\emph{0.036}& 5e-08 &398 &\textbf{0.013}& 6e-08 &1741 &0.037& 6e-08 \\ 
$\texttt{lp\_bnl2}$ & 2324/4486 & 2324/0.001 &$\textnormal{5986}^{\dagger}$ &$\textnormal{3.4}^{\dagger}$& $ \textnormal{0.01}^{\dagger}$ &5878 &\emph{0.28}& 4e-08 &1129 &\textbf{0.069}& 4e-08 &5417 &0.3& 4e-08 \\ 
$\texttt{lp\_cre\_a}$ & 3516/7248 & 3428/0.0007 &$\textnormal{8748}^{\dagger}$ &$\textnormal{6.7}^{\dagger}$& $ \textnormal{0.04}^{\dagger}$ &$\textnormal{8748}^{\dagger}$ &$\textnormal{0.73}^{\dagger}$& $ \textnormal{2e-05}^{\dagger}$ &3109 &\textbf{0.33}& 2e-08 &$\textnormal{8748}^{\dagger}$ &$\textnormal{\emph{0.72}}^{\dagger}$& $ \textnormal{0.0001}^{\dagger}$ \\ 
$\texttt{lp\_cre\_c}$ & 3068/6411 & 2986/0.0008 &$\textnormal{7911}^{\dagger}$ &$\textnormal{5.6}^{\dagger}$& $ \textnormal{0.05}^{\dagger}$ &$\textnormal{7911}^{\dagger}$ &$\textnormal{\emph{0.5}}^{\dagger}$& $ \textnormal{7e-05}^{\dagger}$ &2897 &\textbf{0.23}& 3e-08 &$\textnormal{7911}^{\dagger}$ &$\textnormal{0.57}^{\dagger}$& $ \textnormal{0.0002}^{\dagger}$ \\ 
$\texttt{lp\_cycle}$ & 1903/3371 & 1875/0.003 &$\textnormal{4871}^{\dagger}$ &$\textnormal{2.6}^{\dagger}$& $ \textnormal{0.005}^{\dagger}$ &$\textnormal{4871}^{\dagger}$ &$\textnormal{\textbf{0.28}}^{\dagger}$& $ \textnormal{0.0003}^{\dagger}$ &$\textnormal{4871}^{\dagger}$ &$\textnormal{0.32}^{\dagger}$& $ \textnormal{9e-05}^{\dagger}$ &$\textnormal{4871}^{\dagger}$ &$\textnormal{\emph{0.29}}^{\dagger}$& $ \textnormal{0.001}^{\dagger}$ \\ 
$\texttt{lp\_czprob}$ & 929/3562 & 929/0.003 &$\textnormal{5062}^{\dagger}$ &$\textnormal{1.8}^{\dagger}$& $ \textnormal{9e-05}^{\dagger}$ &123 &0.0049& 5e-09 &84 &\textbf{0.0043}& 6e-09 &109 &\emph{0.0044}& 4e-09 \\ 
$\texttt{lp\_d2q06c}$ & 2171/5831 & 2171/0.003 &$\textnormal{7331}^{\dagger}$ &$\textnormal{6}^{\dagger}$& $ \textnormal{0.02}^{\dagger}$ &$\textnormal{7331}^{\dagger}$ &$\textnormal{\emph{0.94}}^{\dagger}$& $ \textnormal{0.0001}^{\dagger}$ &$\textnormal{7331}^{\dagger}$ &$\textnormal{1.1}^{\dagger}$& $ \textnormal{0.0001}^{\dagger}$ &$\textnormal{7331}^{\dagger}$ &$\textnormal{\textbf{0.94}}^{\dagger}$& $ \textnormal{0.0009}^{\dagger}$ \\ 
$\texttt{lp\_d6cube}$ & 415/6184 & 404/0.01 &$\textnormal{7684}^{\dagger}$ &$\textnormal{4.8}^{\dagger}$& $ \textnormal{9e-05}^{\dagger}$ &240 &\emph{0.041}& 4e-09 &324 &0.06& 5e-09 &191 &\textbf{0.032}& 5e-09 \\ 
$\texttt{lp\_degen3}$ & 1503/2604 & 1503/0.006 &$\textnormal{4104}^{\dagger}$ &$\textnormal{2.2}^{\dagger}$& $ \textnormal{0.01}^{\dagger}$ &1150 &0.091& 2e-07 &522 &\textbf{0.046}& 3e-07 &1009 &\emph{0.082}& 3e-07 \\ 
$\texttt{lp\_fffff800}$ & 524/1028 & 524/0.01 &$\textnormal{2528}^{\dagger}$ &$\textnormal{0.43}^{\dagger}$& $ \textnormal{0.0009}^{\dagger}$ &$\textnormal{2528}^{\dagger}$ &$\textnormal{\textbf{0.053}}^{\dagger}$& $ \textnormal{7e-06}^{\dagger}$ &$\textnormal{2528}^{\dagger}$ &$\textnormal{0.062}^{\dagger}$& $ \textnormal{3e-09}^{\dagger}$ &$\textnormal{2528}^{\dagger}$ &$\textnormal{\emph{0.057}}^{\dagger}$& $ \textnormal{4e-05}^{\dagger}$ \\ 
$\texttt{lp\_finnis}$ & 497/1064 & 497/0.005 &$\textnormal{2564}^{\dagger}$ &$\textnormal{0.43}^{\dagger}$& $ \textnormal{0.005}^{\dagger}$ &346 &0.0059& 1e-07 &90 &\textbf{0.0019}& 5e-08 &330 &\emph{0.0058}& 2e-07 \\ 
$\texttt{lp\_fit1d}$ & 24/1049 & 24/0.5 &1148 &0.15& 1e-09 &89 &0.0029& 1e-10 &21 &\textbf{0.00098}& 4e-10 &65 &\emph{0.0021}& 2e-10 \\ 
$\texttt{lp\_fit1p}$ & 627/1677 & 627/0.009 &$\textnormal{3177}^{\dagger}$ &$\textnormal{0.74}^{\dagger}$& $ \textnormal{5e-06}^{\dagger}$ &120 &0.0041& 6e-09 &15 &\textbf{0.0008}& 7e-09 &103 &\emph{0.0039}& 1e-09 \\ 
$\texttt{lp\_ganges}$ & 1309/1706 & 1309/0.003 &$\textnormal{3206}^{\dagger}$ &$\textnormal{1.1}^{\dagger}$& $ \textnormal{0.03}^{\dagger}$ &114 &\emph{0.0033}& 9e-07 &93 &\textbf{0.0033}& 8e-07 &113 &0.0037& 1e-06 \\ 
$\texttt{lp\_gfrd\_pnc}$ & 616/1160 & 616/0.003 &$\textnormal{2660}^{\dagger}$ &$\textnormal{0.5}^{\dagger}$& $ \textnormal{0.0004}^{\dagger}$ &211 &\emph{0.0036}& 2e-09 &92 &\textbf{0.0021}& 2e-09 &203 &0.0039& 2e-09 \\ 
$\texttt{lp\_greenbea}$ & 2392/5598 & 2389/0.002 &$\textnormal{7098}^{\dagger}$ &$\textnormal{5.7}^{\dagger}$& $ \textnormal{0.007}^{\dagger}$ &3433 &\emph{0.3}& 8e-08 &1604 &\textbf{0.18}& 8e-08 &3151 &0.3& 8e-08 \\ 
$\texttt{lp\_greenbeb}$ & 2392/5598 & 2389/0.002 &$\textnormal{7098}^{\dagger}$ &$\textnormal{5.6}^{\dagger}$& $ \textnormal{0.007}^{\dagger}$ &3433 &\emph{0.3}& 8e-08 &1604 &\textbf{0.17}& 8e-08 &3151 &0.3& 8e-08 \\ 
$\texttt{lp\_ken\_07}$ & 2426/3602 & 2426/0.001 &$\textnormal{5102}^{\dagger}$ &$\textnormal{3.2}^{\dagger}$& $ \textnormal{0.01}^{\dagger}$ &171 &\textbf{0.0079}& 5e-08 &162 &0.0096& 2e-07 &167 &\emph{0.0087}& 2e-07 \\ 
$\texttt{lp\_maros}$ & 846/1966 & 846/0.006 &$\textnormal{3466}^{\dagger}$ &$\textnormal{1}^{\dagger}$& $ \textnormal{0.004}^{\dagger}$ &$\textnormal{3466}^{\dagger}$ &$\textnormal{\textbf{0.11}}^{\dagger}$& $ \textnormal{0.0001}^{\dagger}$ &$\textnormal{3466}^{\dagger}$ &$\textnormal{0.13}^{\dagger}$& $ \textnormal{3e-07}^{\dagger}$ &$\textnormal{3466}^{\dagger}$ &$\textnormal{\emph{0.13}}^{\dagger}$& $ \textnormal{0.002}^{\dagger}$ \\ 
$\texttt{lp\_maros\_r7}$ & 3136/9408 & 3136/0.005 &9409 &13& 6e-07 &14 &\emph{0.0039}& 3e-07 &14 &0.0058& 9e-08 &13 &\textbf{0.0032}& 6e-07 \\ 
$\texttt{lp\_modszk1}$ & 687/1620 & 686/0.003 &$\textnormal{3120}^{\dagger}$ &$\textnormal{0.72}^{\dagger}$& $ \textnormal{0.001}^{\dagger}$ &70 &\textbf{0.0016}& 9e-07 &61 &0.0017& 7e-07 &69 &\emph{0.0017}& 9e-07 \\ 
$\texttt{lp\_pds\_02}$ & 2953/7716 & 2953/0.0007 &$\textnormal{9216}^{\dagger}$ &$\textnormal{7.6}^{\dagger}$& $ \textnormal{0.004}^{\dagger}$ &119 &\textbf{0.0087}& 2e-07 &106 &0.011& 1e-07 &117 &\emph{0.01}& 2e-07 \\ 
$\texttt{lp\_perold}$ & 625/1506 & 625/0.007 &$\textnormal{3006}^{\dagger}$ &$\textnormal{0.68}^{\dagger}$& $ \textnormal{0.02}^{\dagger}$ &$\textnormal{3006}^{\dagger}$ &$\textnormal{\emph{0.07}}^{\dagger}$& $ \textnormal{0.0003}^{\dagger}$ &844 &\textbf{0.024}& 7e-10 &$\textnormal{3006}^{\dagger}$ &$\textnormal{0.079}^{\dagger}$& $ \textnormal{0.005}^{\dagger}$ \\ 
$\texttt{lp\_pilot}$ & 1441/4860 & 1441/0.006 &$\textnormal{6360}^{\dagger}$ &$\textnormal{4.4}^{\dagger}$& $ \textnormal{0.02}^{\dagger}$ &3344 &0.38& 6e-08 &660 &\textbf{0.087}& 6e-08 &3047 &\emph{0.35}& 5e-08 \\ 
$\texttt{lp\_pilot4}$ & 410/1123 & 410/0.01 &$\textnormal{2623}^{\dagger}$ &$\textnormal{0.44}^{\dagger}$& $ \textnormal{0.008}^{\dagger}$ &$\textnormal{2623}^{\dagger}$ &$\textnormal{\emph{0.05}}^{\dagger}$& $ \textnormal{2e-05}^{\dagger}$ &410 &\textbf{0.0095}& 5e-10 &$\textnormal{2623}^{\dagger}$ &$\textnormal{0.053}^{\dagger}$& $ \textnormal{0.0006}^{\dagger}$ \\ 
$\texttt{lp\_pilot87}$ & 2030/6680 & 2030/0.006 &$\textnormal{8180}^{\dagger}$ &$\textnormal{7.6}^{\dagger}$& $ \textnormal{0.05}^{\dagger}$ &7563 &1.2& 8e-09 &685 &\textbf{0.13}& 1e-08 &6880 &\emph{1}& 1e-08 \\ 
$\texttt{lp\_pilot\_ja}$ & 940/2267 & 940/0.007 &$\textnormal{3767}^{\dagger}$ &$\textnormal{1.3}^{\dagger}$& $ \textnormal{0.04}^{\dagger}$ &$\textnormal{3767}^{\dagger}$ &$\textnormal{\textbf{0.16}}^{\dagger}$& $ \textnormal{0.0003}^{\dagger}$ &$\textnormal{3767}^{\dagger}$ &$\textnormal{0.18}^{\dagger}$& $ \textnormal{0.0001}^{\dagger}$ &$\textnormal{3767}^{\dagger}$ &$\textnormal{\emph{0.16}}^{\dagger}$& $ \textnormal{0.003}^{\dagger}$ \\ 
$\texttt{lp\_pilot\_we}$ & 722/2928 & 722/0.004 &$\textnormal{4428}^{\dagger}$ &$\textnormal{1.2}^{\dagger}$& $ \textnormal{0.01}^{\dagger}$ &$\textnormal{4428}^{\dagger}$ &$\textnormal{\emph{0.16}}^{\dagger}$& $ \textnormal{0.0002}^{\dagger}$ &2711 &\textbf{0.13}& 6e-10 &$\textnormal{4428}^{\dagger}$ &$\textnormal{0.17}^{\dagger}$& $ \textnormal{0.003}^{\dagger}$ \\ 
$\texttt{lp\_pilotnov}$ & 975/2446 & 975/0.006 &$\textnormal{3946}^{\dagger}$ &$\textnormal{1.3}^{\dagger}$& $ \textnormal{0.02}^{\dagger}$ &$\textnormal{3946}^{\dagger}$ &$\textnormal{\textbf{0.15}}^{\dagger}$& $ \textnormal{0.0002}^{\dagger}$ &$\textnormal{3946}^{\dagger}$ &$\textnormal{0.18}^{\dagger}$& $ \textnormal{0.0001}^{\dagger}$ &$\textnormal{3946}^{\dagger}$ &$\textnormal{\emph{0.17}}^{\dagger}$& $ \textnormal{0.01}^{\dagger}$ \\ 
$\texttt{lp\_qap12}$ & 3192/8856 & 3192/0.001 &9114 &9& 2e-07 &7 &\emph{0.0011}& 9e-15 &7 &0.0016& 2e-15 &6 &\textbf{0.00091}& 8e-13 \\ 
$\texttt{lp\_qap8}$ & 912/1632 & 912/0.005 &2351 &0.65& 5e-07 &7 &\textbf{0.00021}& 7e-16 &7 &0.00033& 2e-15 &6 &\emph{0.00022}& 5e-14 \\ 
$\texttt{lp\_scfxm2}$ & 660/1200 & 660/0.007 &$\textnormal{2700}^{\dagger}$ &$\textnormal{0.56}^{\dagger}$& $ \textnormal{0.007}^{\dagger}$ &$\textnormal{2700}^{\dagger}$ &$\textnormal{\emph{0.06}}^{\dagger}$& $ \textnormal{2e-06}^{\dagger}$ &1091 &\textbf{0.03}& 1e-08 &$\textnormal{2700}^{\dagger}$ &$\textnormal{0.068}^{\dagger}$& $ \textnormal{8e-07}^{\dagger}$ \\ 
$\texttt{lp\_scfxm3}$ & 990/1800 & 990/0.005 &$\textnormal{3300}^{\dagger}$ &$\textnormal{0.97}^{\dagger}$& $ \textnormal{0.01}^{\dagger}$ &$\textnormal{3300}^{\dagger}$ &$\textnormal{\emph{0.097}}^{\dagger}$& $ \textnormal{2e-07}^{\dagger}$ &1113 &\textbf{0.039}& 1e-08 &$\textnormal{3300}^{\dagger}$ &$\textnormal{0.11}^{\dagger}$& $ \textnormal{1e-07}^{\dagger}$ \\ 
$\texttt{lp\_scrs8}$ & 490/1275 & 490/0.005 &$\textnormal{2775}^{\dagger}$ &$\textnormal{0.48}^{\dagger}$& $ \textnormal{0.004}^{\dagger}$ &$\textnormal{2775}^{\dagger}$ &$\textnormal{\emph{0.048}}^{\dagger}$& $ \textnormal{5e-06}^{\dagger}$ &747 &\textbf{0.017}& 5e-09 &$\textnormal{2775}^{\dagger}$ &$\textnormal{0.055}^{\dagger}$& $ \textnormal{2e-05}^{\dagger}$ \\ 
$\texttt{lp\_scsd6}$ & 147/1350 & 147/0.02 &$\textnormal{2850}^{\dagger}$ &$\textnormal{0.34}^{\dagger}$& $ \textnormal{0.0005}^{\dagger}$ &55 &\emph{0.0011}& 2e-06 &54 &0.0013& 1e-06 &54 &\textbf{0.0011}& 5e-06 \\ 
$\texttt{lp\_scsd8}$ & 397/2750 & 397/0.008 &$\textnormal{4250}^{\dagger}$ &$\textnormal{0.92}^{\dagger}$& $ \textnormal{0.0009}^{\dagger}$ &124 &\emph{0.0047}& 4e-06 &130 &0.0062& 5e-06 &123 &\textbf{0.0044}& 6e-06 \\ 
$\texttt{lp\_sctap2}$ & 1090/2500 & 1090/0.003 &$\textnormal{4000}^{\dagger}$ &$\textnormal{1.3}^{\dagger}$& $ \textnormal{0.02}^{\dagger}$ &785 &\emph{0.025}& 6e-08 &37 &\textbf{0.0016}& 4e-08 &750 &0.028& 7e-08 \\ 
$\texttt{lp\_sctap3}$ & 1480/3340 & 1480/0.002 &$\textnormal{4840}^{\dagger}$ &$\textnormal{2.3}^{\dagger}$& $ \textnormal{0.03}^{\dagger}$ &834 &\emph{0.036}& 6e-08 &40 &\textbf{0.0024}& 3e-08 &814 &0.039& 6e-08 \\ 
$\texttt{lp\_shell}$ & 536/1777 & 536/0.004 &$\textnormal{3277}^{\dagger}$ &$\textnormal{0.65}^{\dagger}$& $ \textnormal{0.004}^{\dagger}$ &85 &\textbf{0.0019}& 3e-07 &86 &0.0023& 2e-07 &82 &\emph{0.002}& 3e-07 \\ 
$\texttt{lp\_ship04l}$ & 402/2166 & 360/0.007 &$\textnormal{3666}^{\dagger}$ &$\textnormal{0.71}^{\dagger}$& $ \textnormal{0.002}^{\dagger}$ &70 &\textbf{0.0019}& 2e-07 &63 &0.0022& 8e-08 &69 &\emph{0.002}& 2e-07 \\ 
$\texttt{lp\_ship04s}$ & 402/1506 & 360/0.007 &$\textnormal{3006}^{\dagger}$ &$\textnormal{0.49}^{\dagger}$& $ \textnormal{0.003}^{\dagger}$ &90 &\emph{0.002}& 3e-07 &80 &0.0022& 1e-07 &88 &\textbf{0.002}& 3e-07 \\ 
   \hline
 \end{tabular}
 \hss}
\end{table} 

\clearpage

\subsection{Experiment V}
\label{sec:EXIII}

\jbbR{In this experiment we compare the proposed solvers with the implementations from \cite{GowerRichtarik15}.
The same test problems are used (i.e., \texttt{aloi-scale}, \texttt{covtype-libsvm}, \texttt{protein},
\texttt{SUSY}, and four additional ones). The quantities $\A$ and $\bbold$ are obtained from {\small LIBSVM} \cite{LIBSVM}. Convergence
is determined if the norm of residuals is less than or equal to $\epsilon=10^{-4}$. The outcomes are displayed in
Figure \ref{fig:EXIII}, with residuals for our proposed solvers and four methods from \cite[Section 7.3]{GowerRichtarik15}. }


\begin{figure}[t!]   
     \centering
     \begin{subfigure}[b]{0.49\textwidth}
         \centering
         \includegraphics[trim=0 200 50 200,width=\textwidth]{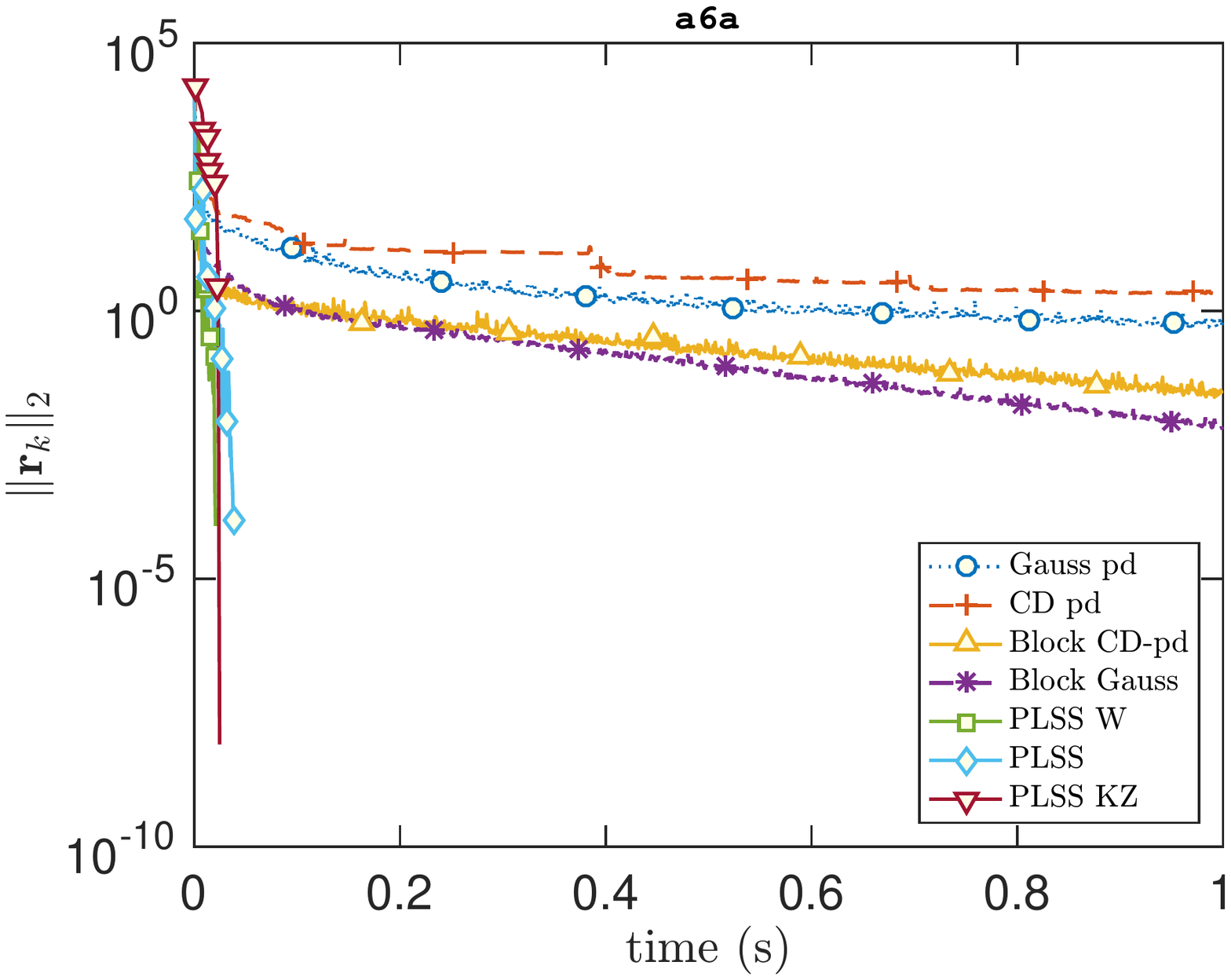}
         \label{fig:prob1}
     \end{subfigure}
     \hfill
     \begin{subfigure}[b]{0.49\textwidth}
         \centering
         \includegraphics[trim=50 200 0 200,width=\textwidth]{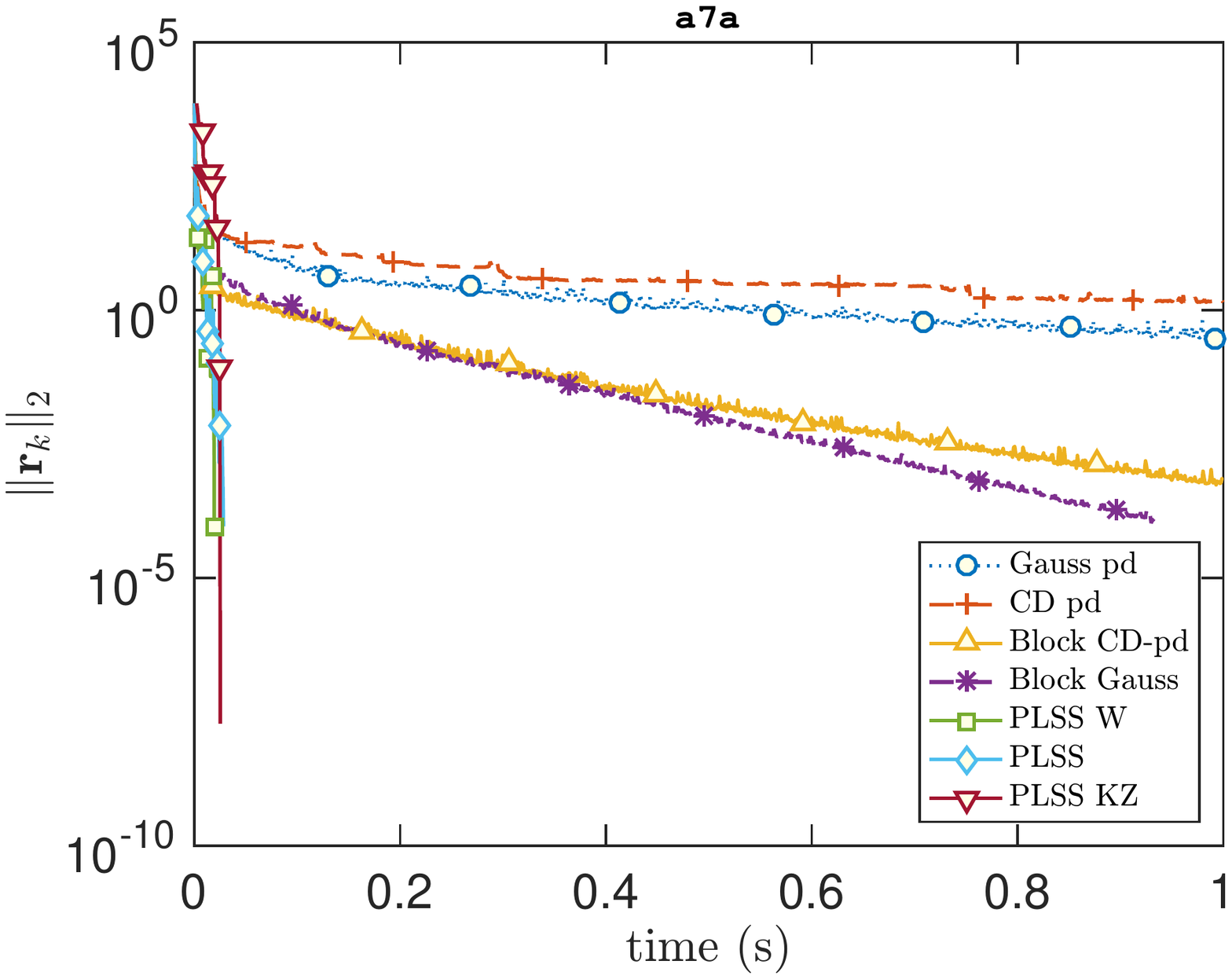}
         \label{fig:prob2}
     \end{subfigure}
     \hfill
     \begin{subfigure}[b]{0.49\textwidth}
         \centering
         \includegraphics[trim=0 200 50 200,width=\textwidth]{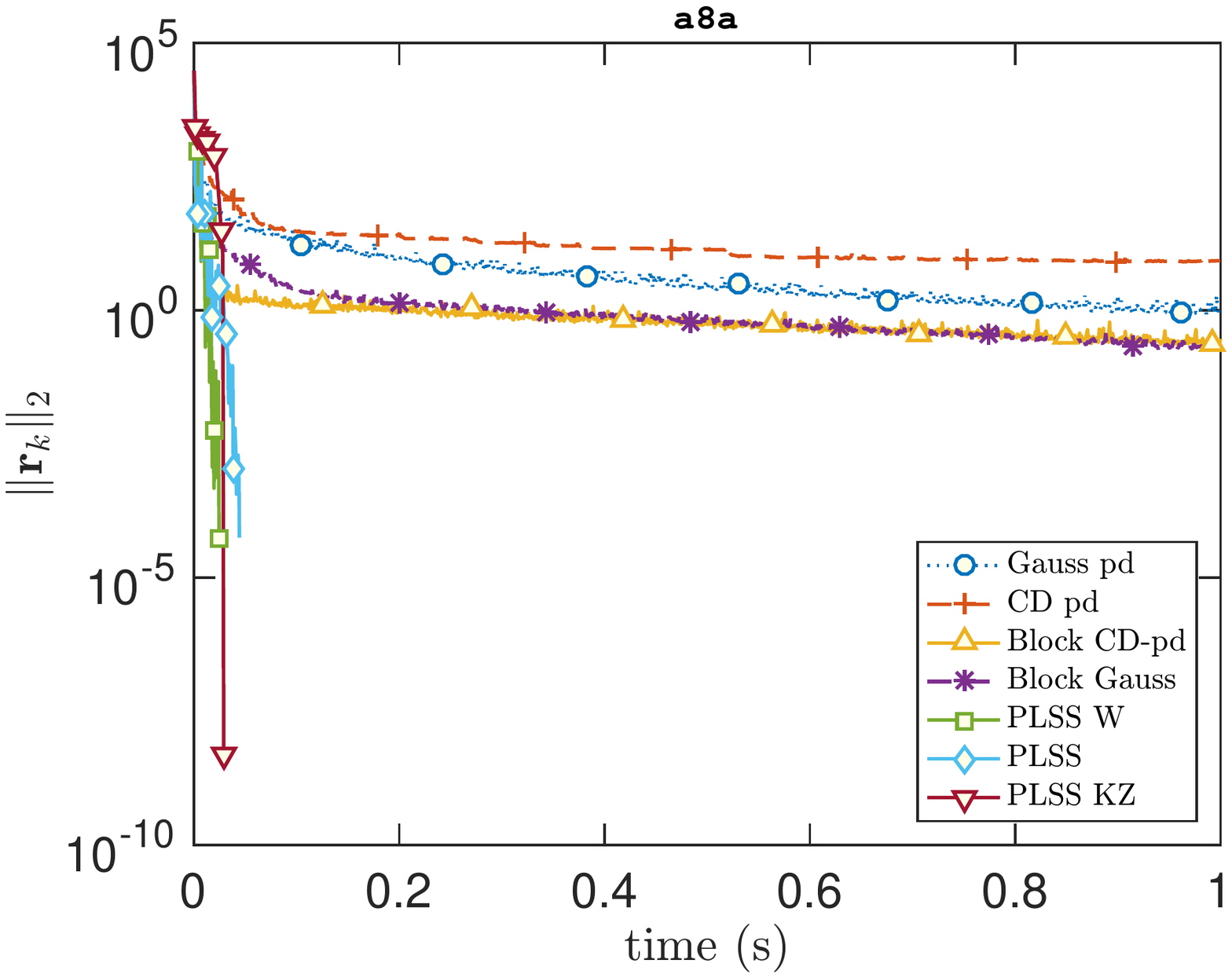}
         \label{fig:prob3}
     \end{subfigure}
     \hfill
     \begin{subfigure}[b]{0.49\textwidth}
         \centering
         \includegraphics[trim=50 200 0 200,width=\textwidth]{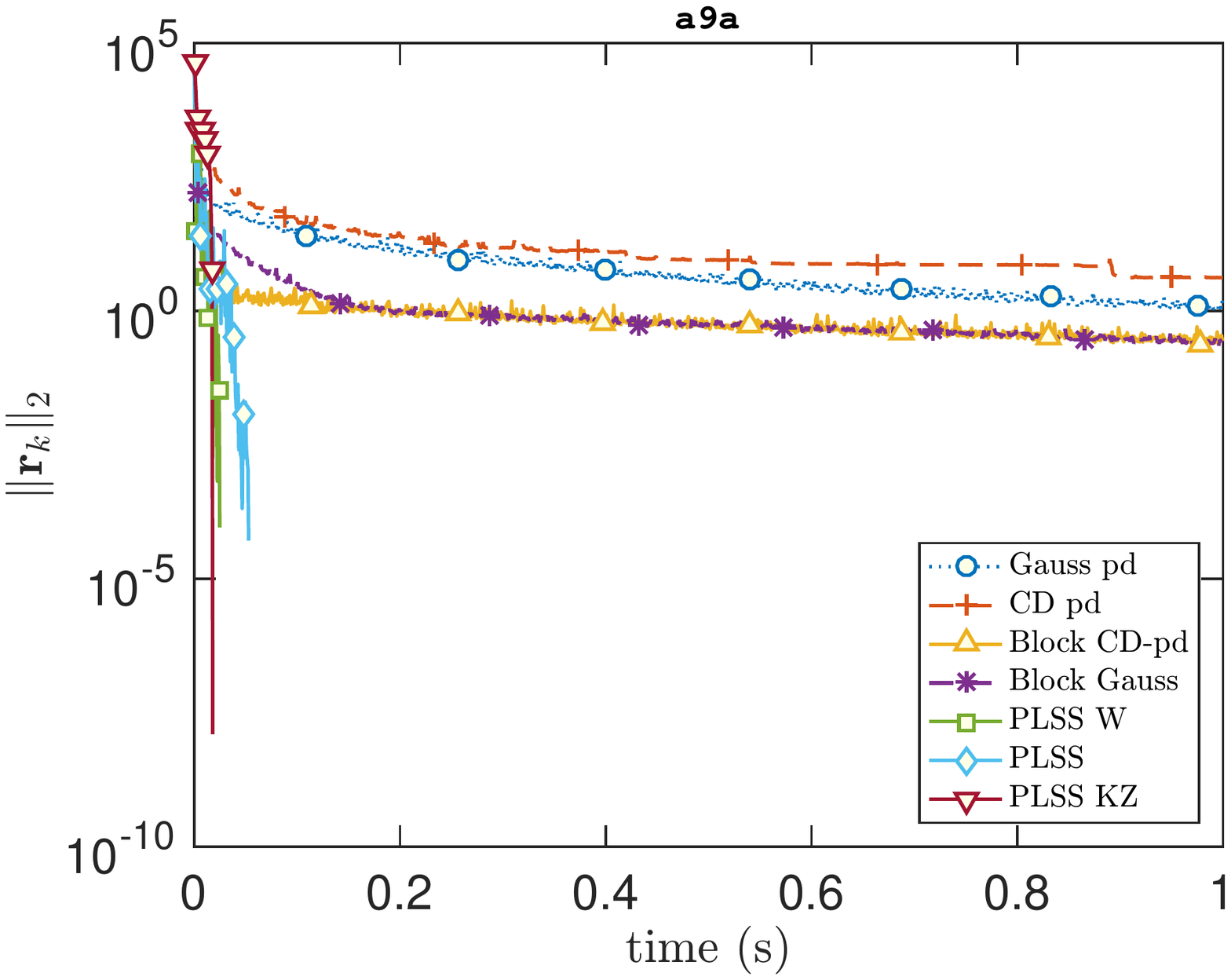}
         \label{fig:prob4}
     \end{subfigure}
     \hfill
     \begin{subfigure}[b]{0.49\textwidth}
         \centering
         \includegraphics[trim=0 200 50 200,width=\textwidth]{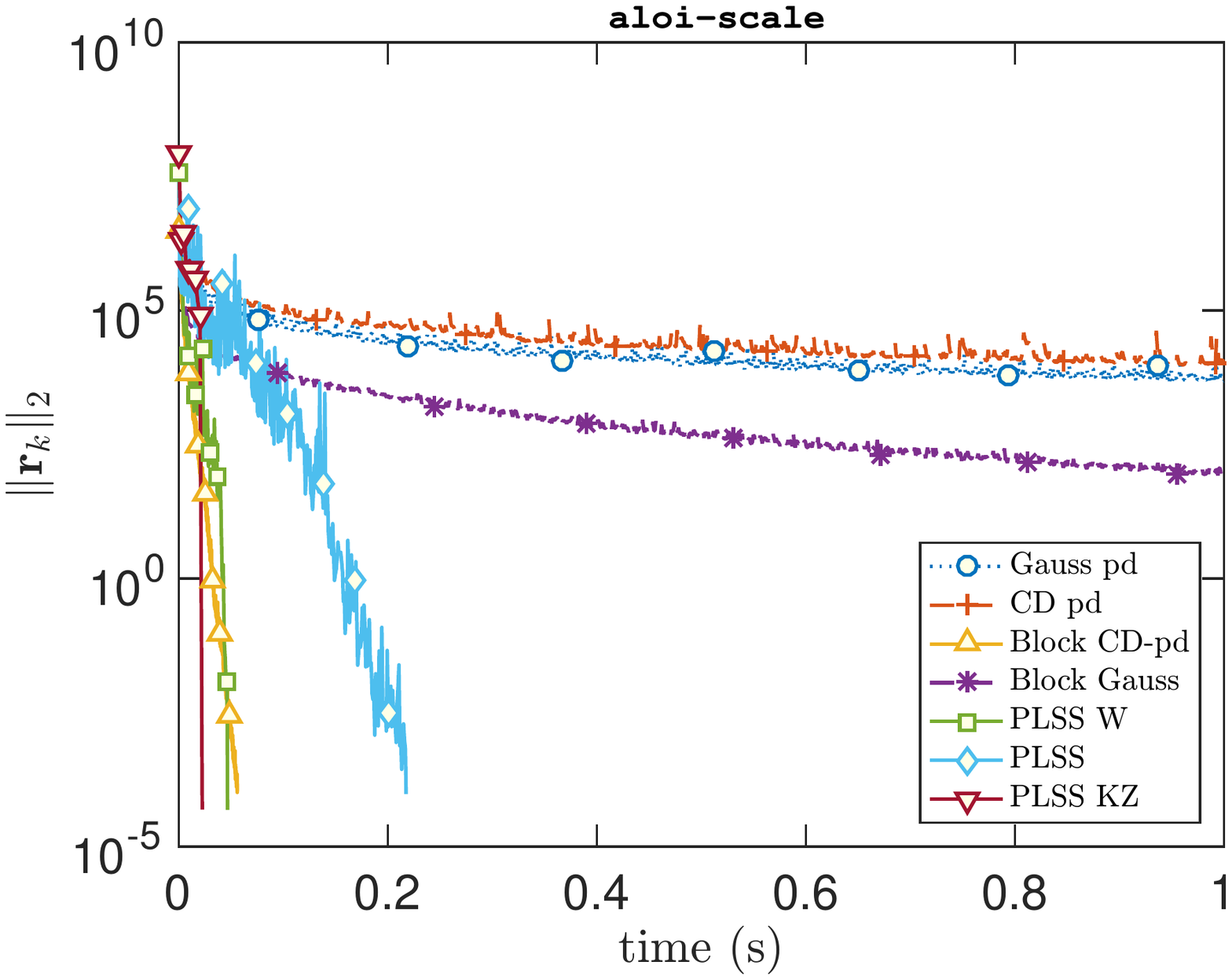}
         \label{fig:prob5}
     \end{subfigure}
     \hfill
     \begin{subfigure}[b]{0.49\textwidth}
         \centering
         \includegraphics[trim=50 200 0 200,width=\textwidth]{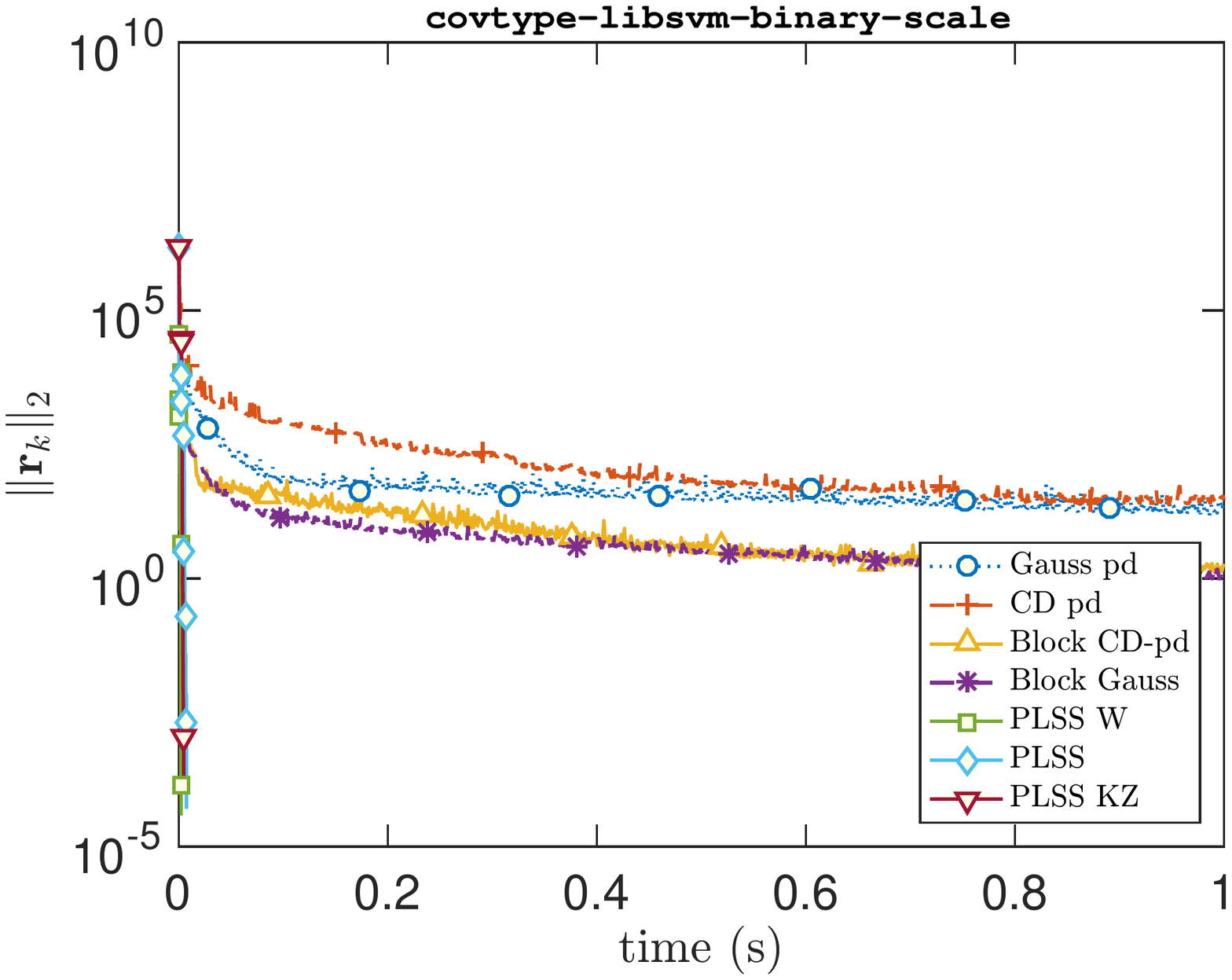}
         \label{fig:prob6}
     \end{subfigure}
     \hfill
     \begin{subfigure}[b]{0.49\textwidth}
         \centering
         \includegraphics[trim=0 200 50 200,width=\textwidth]{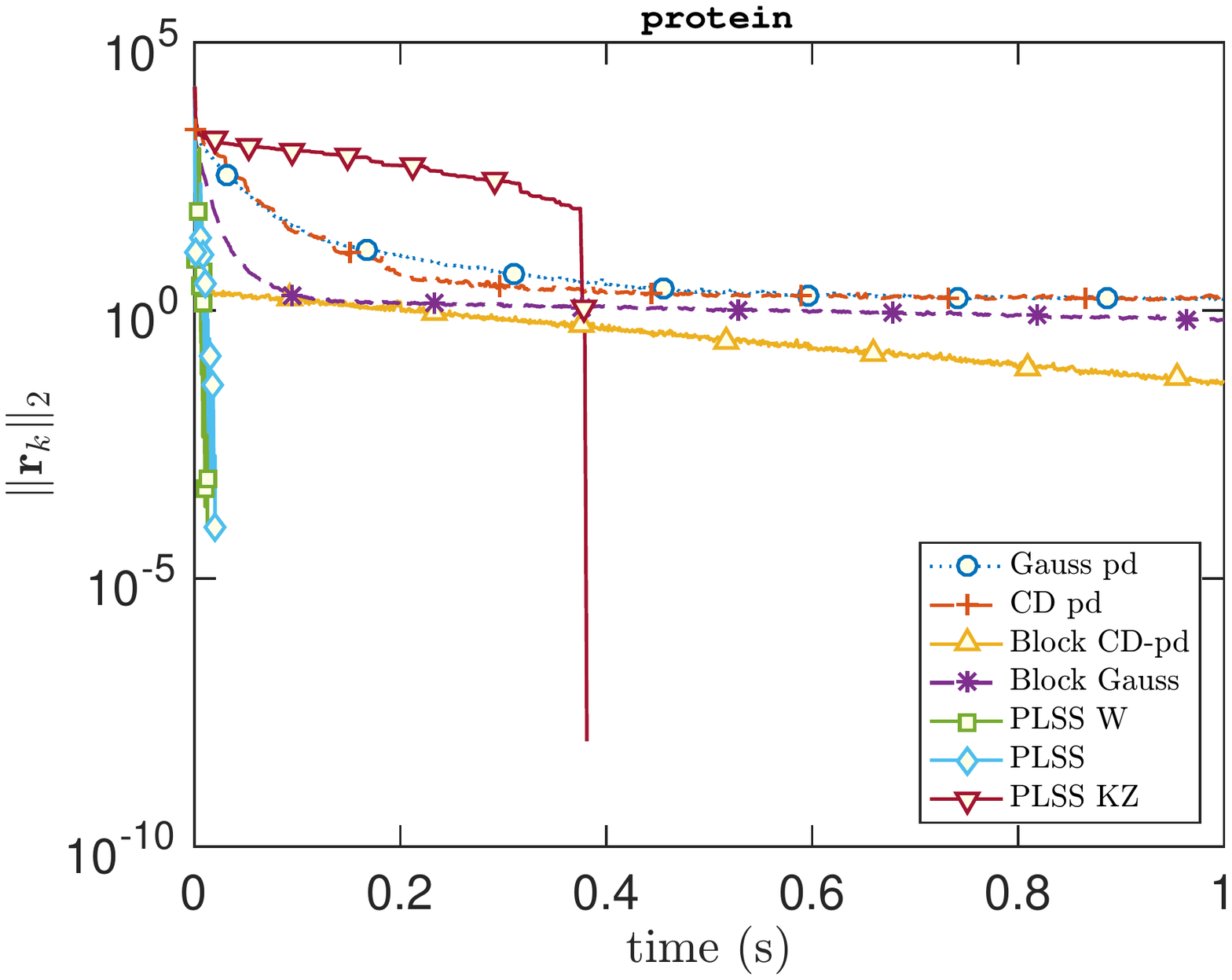}
         \label{fig:prob7}
     \end{subfigure}
     \hfill
     \begin{subfigure}[b]{0.49\textwidth}
         \centering
         \includegraphics[trim=50 200 0 200,width=\textwidth]{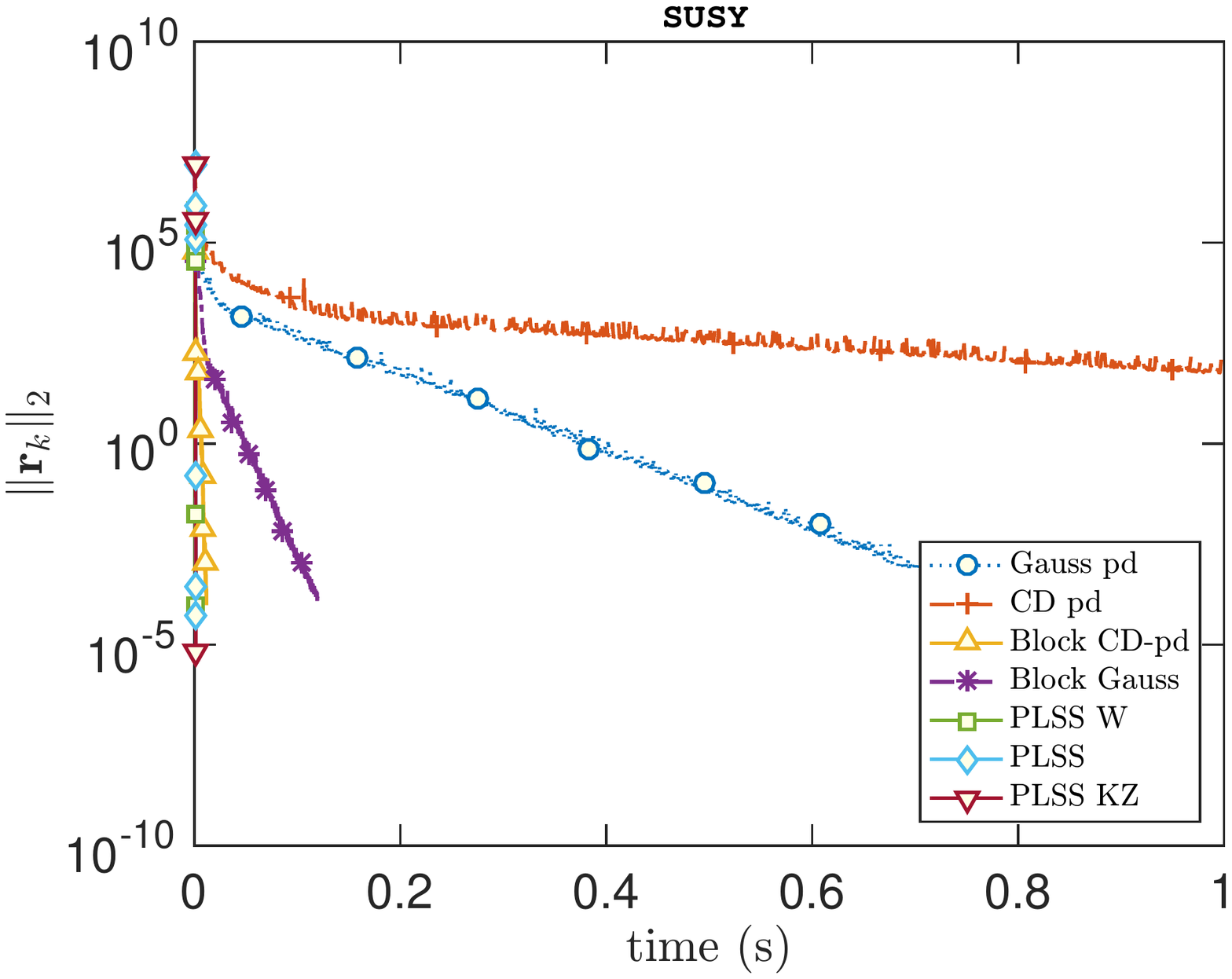}
         \label{fig:prob8}
     \end{subfigure}
        \caption{Comparison with randomized projection methods over a time interval of 1 second. The \texttt{error} represents the 2-norm of residuals. Four implementations of \cite{GowerRichtarik15}
        are included for reference.}
        \label{fig:EXIII}
\end{figure}

    \subsection{\jbbRo{Randomized projection}}
        \label{subsec:randproj}
    \jbbRo{For further comparison of our methods with randomized
    projections, we use the unsymmetric ill-conditioned `\texttt{sampling}' matrix from 
    MATLAB's matrix gallery with \texttt{n = 100;}
    \texttt{A = gallery(`sampling',n)}. The solution is the vector of all ones,
    and $\x_0 = \b{0}$. For this problem, $ \texttt{cond(A) = 2.7859e+17} $. In Figure \ref{fig:compALL} we compare Algorithm \ref{alg:PLSS} (with $\b{W} = \b{I}$) to updates \eqref{eq:pkLong} with random normal sketching
    matrices and varying dimensions of the sketch (i.e., $r$ varies). As expected, the convergence behavior of the randomized methods improves when $ r $ increases from 
    10 to 40. At the same time, {\small PLSS} uses a simple recursive update
    (with very low computational cost) and converges significantly more rapidly than any of
    the random sketches.}

    \begin{figure*}[t!]	
		\includegraphics[width=\textwidth]{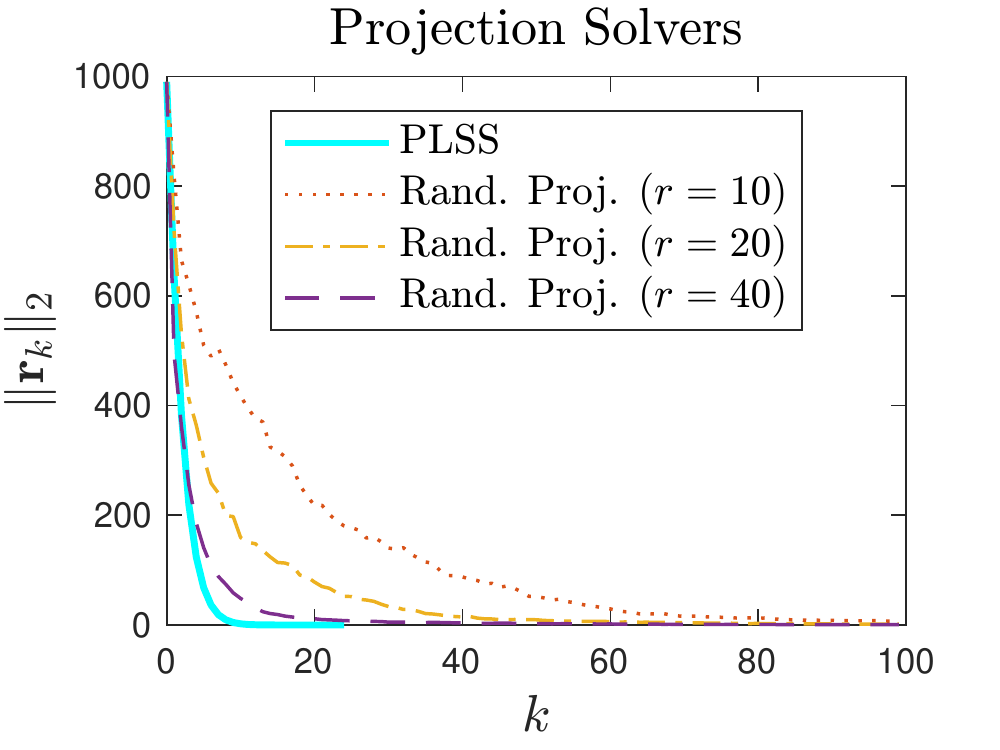}
		\caption{Comparison of {\small PLSS} with random normal projected solvers when the size of the sketch increases: $r=10,20,40$.}		
		\label{fig:compALL}       
\end{figure*}
	\section{Conclusions}
	\label{sec:conclusions}
	\jbbR{We develop an iterative projection method for solving consistent rectangular or square systems. Our method is based on appending one column each iteration to the sketching matrix. For full-rank sketches (common in practice) we prove that the underlying process terminates, with exact arithmetic, in a
	finite number of iterations. When the sketching matrix
	stores the history of all previous residuals, we develop a method with
	orthogonal residuals and updates. We include a parameter matrix that can be used to improve computations. Importantly, we derive a short recursive formula that is simple to implement, and an algorithm that updates only four vectors. In numerical experiments, including large sparse systems, our methods compare favorably to widely known methods ({\small LSQR}, {\small LSMR} or {\small CRAIG}) and to some existing randomized methods.}
	
	\appendix
	
	\section{Optimality}
	\label{app:A}
	\jbbR{Solving linear system \eqref{eq:KKTSys} is equivalent to the constrained optimization problem
	\begin{align} 
	    \min_{ \p \in \mathbb{R}^n } \quad & \frac{1}{2} \norm{\B\p}^2_2 - 
	    \rb_{k-1}\tp \A\p
	   \label{eq:itProbLong}
	\\ 
	   \text{ subject to } \quad & \jbbR{\Sbold\tp_{k}}\A(\jbbR{\x_{k-1}} + \p) = \jbbR{\Sbold\tp_{k}} \bbold. 
	   \label{eq:consLong}
	\end{align}
	When $\Sk \in \mathbb{R}^{n \times k}$ is in the range of $\A=\U\bs{\Sigma}\V\tp$, it can be represented as $\Sk=\U\b{T}_k$ using a 
	nonsingular matrix $\b{T}_k \in \mathbb{R}^{k \times k}$. Thus constraint \eqref{eq:consLong} implies
	\begin{equation*}
	    \bs{\Sigma}\V\tp(\x_{k-1}+\p)=\U\tp\b{b} \quad \text{ or } \quad \A\p=\U\U\tp\rb_{k-1,}
	\end{equation*}
	with $\rb_{k-1}=\bbold-\A\x_{k-1}$. Since $\rb_{k-1}\tp \A\p = \norm{\U\tp\rb_{k-1}}^2_2$ is constant with respect to $\p$, problem
	\eqref{eq:itProbLong}--\eqref{eq:consLong} is equivalently represented by
	\begin{align*} 
	    \min_{ \p \in \mathbb{R}^n } \quad & \frac{1}{2} \norm{\B\p}^2_2 
	\\ 
	   \text{ subject to } \quad & \jbbR{\Sbold\tp_{k}}\A(\jbbR{\x_{k-1}} + \p) = \jbbR{\Sbold\tp_{k}} \bbold. 
	\end{align*}}

	
\section{Linear combination of updates}
\label{app:B}
\jbb{We derive a representation of $ \p_{\jbbRt{k}} $ as a linear combination
of previous updates $ \biidx{p}{\jbbRt{k-1}},\ldots,\biidx{p}{1} $. Recall from \eqref{eq:compute-pk-alpha1}
that 
\begin{equation*}
    \p_{\jbbRt{k}} = \frac{\jbbRo{\jbbRt{\s_{k}}\tp \rb_{k-1}}}{\delta_{\jbbRt{k}}}\left( \y_{\jbbR{k}} - \Y_{k-1}\bhiidx{t}{k}   \right),
\end{equation*}
where 
\setlength{\arraycolsep}{2pt}
\begin{equation*}
   \bhiidx{t}{k} = \biidx{R}{k-1}\biidx{D}{k-1}\biidx{R}{k-1}\tp \biidx{Y}{k-1}\tp \y_{\jbbR{k}} =
    \bigg( 
        \bmat{\biidx{R}{k-2}\biidx{D}{k-2}\biidx{R}{k-2}\tp &
          \\                & 0} 
 + \frac{\biidx{\jbb{t}}{\jbbR{k-1}}\biidx{\jbb{t}}{\jbbR{k-1}}\tp}{\delta_{k-1}}
	    \bigg)\biidx{Y}{k-1}\tp \y_{\jbbR{k}}
\end{equation*}
\setlength{\arraycolsep}{5pt}
(with superscript indices suppressed).
From this we deduce
\begin{align*}
    \biidx{Y}{k-1}\bhiidx{t}{k} &= \biidx{Y}{k-2} \biidx{R}{k-2} \biidx{D}{k-2} \biidx{R}{k-2}\tp
    \biidx{Y}{k-2}\tp \y_{\jbbR{k}} + \frac{\biidx{t}{k-1}\tp(\biidx{Y}{k-1}\tp \y_{\jbbR{k}})}{\delta_{k-2}}\biidx{Y}{k-1}\biidx{t}{k-1}\\
	    &= \biidx{Y}{k-2} \biidx{R}{k-2} \biidx{D}{k-2} \biidx{R}{k-2}\tp
    \biidx{Y}{k-2}\tp \y_{\jbbR{k}} + \frac{\biidx{t}{k-1}\tp(\biidx{Y}{k-1}\tp \y_{\jbbR{k}})}{\delta_{k-1}} 
    (\biidx{Y}{k-2}\bhiidx{t}{k-1} - \biidx{y}{k-1}) \\
	    &= \biidx{Y}{k-2} \biidx{R}{k-2} \biidx{D}{k-2} \biidx{R}{k-2}\tp
    \biidx{Y}{k-2}\tp \y_{\jbbR{k}} - \frac{(\biidx{t}{k-1}\tp(\biidx{Y}{k-1}\tp \b{y}_{\jbbRt{k}}))}{\jbbRo{\s_{k-1}\tp \rb_{k-2}}}\biidx{p}{k-1}.
\end{align*}
Now for $j = 1 \To k-1$, define scalars 
$\alpha_{j} = \biidx{t}{j}\tp(\biidx{Y}{j}\tp \y_{\jbbR{k}}) / \jbbRo{(\s_{j}\tp \rb_{j-1})}$,
so that writing $\biidx{R}{k-2} \biidx{D}{k-2} \biidx{R}{k-2}$ 
in terms of $\biidx{R}{k-3} \biidx{D}{k-3} \biidx{R}{k-3}$ and $ \biidx{t}{k-2} $ (and recursively backwards) gives 
\begin{equation*}
    \biidx{Y}{k-1}\bhiidx{t}{k} = -\sum_{j=1}^{k-1} \alpha_{j} \biidx{p}{j}.
\end{equation*}
Hence the update $ \p_{\jbbRt{k}} $ can be represented by previous updates and the vector $ \y_{\jbbR{k}}$:
\begin{equation*}
    \p_{\jbbRt{k}} = \frac{\jbbRo{\jbbRt{\s_{k}}\tp \rb_{k-1}}}{\delta_{\jbbRt{k}}}\left( \y_{\jbbR{k}} - \Y_{k-1}\bhiidx{t}{k}   \right) = \frac{\jbbRo{\jbbRt{\s_{k}}\tp \rb_{k-1}}}{\delta_k} \big(  
    \jbbRt{\y_{k}} + \sum_{j=1}^{k-1} \alpha_{j} \biidx{p}{j} \big).
\end{equation*}}
	
\section{\jbrv{Condition numbers}}
\label{app:C}	
\jbrv{The condition numbers for the problems in Tables 1, 2 and 4 are shown in Table \ref{tab:cond}.}	

\begin{table}[h!]  
\captionsetup{width=1\linewidth}
\caption{\jbrv{Condition numbers $\kappa$ for the matrices in Tables 1, 2 and 4. Matrices in the first 7 rows
correspond to Tables 1 and 2. Those in rows 8--14 correspond to Table~4.}}
\label{tab:cond}

\setlength{\tabcolsep}{2pt} 

\scriptsize
\hbox to 1.00\textwidth{\hss 
\begin{tabular}{| l l | l l | l l | l l | l l | l l | } 
		\hline
        Matrix & $ \kappa $ &
        Matrix & $ \kappa $ &
        Matrix & $ \kappa $ &
        Matrix & $ \kappa $ &
        Matrix & $ \kappa $ &
        Matrix & $ \kappa $ \\
		\hline
        $\texttt{lpi\_gran}$ & 4e+13 &$\texttt{landmark}$ & 1e+08&$\texttt{Kemelmacher}$ & 2e+04&$\texttt{Maragal\_4}$ & 9e+06&$\texttt{Maragal\_5}$ & 5e+12&$\texttt{Franz4}$ & 5\\ 
$\texttt{Franz5}$ & 1e+01 &$\texttt{Franz6}$ & 8&$\texttt{Franz7}$ & 5&$\texttt{Franz8}$ & 6&$\texttt{Franz9}$ & 6&$\texttt{Franz10}$ & 6\\ 
$\texttt{GL7d12}$ & 8 &$\texttt{GL7d13}$ & 1e+10&$\texttt{ch6-6-b3}$ & 2&$\texttt{ch7-6-b3}$ & 2&$\texttt{ch7-8-b2}$ & 1&$\texttt{ch7-9-b2}$ & 1\\ 
$\texttt{ch8-8-b2}$ & 1 &$\texttt{cis-n4c6-b3}$ & 1&$\texttt{cis-n4c6-b4}$ & 1&$\texttt{mk10-b3}$ & 2&$\texttt{mk11-b3}$ & 2&$\texttt{mk12-b2}$ & 1\\ 
$\texttt{n2c6-b4}$ & 1 &$\texttt{n2c6-b5}$ & 1&$\texttt{n2c6-b6}$ & 2&$\texttt{n3c6-b4}$ & 1&$\texttt{n3c6-b5}$ & 1&$\texttt{n3c6-b6}$ & 1\\ 
$\texttt{n4c5-b4}$ & 1 &$\texttt{n4c5-b5}$ & 2&$\texttt{n4c5-b6}$ & 2&$\texttt{n4c6-b3}$ & 1&$\texttt{n4c6-b4}$ & 1&$\texttt{rel7}$ & 1e+01\\ 
$\texttt{relat7b}$ & 1e+01 &$\texttt{relat7}$ & 1e+01&$\texttt{mesh\_deform}$ & 1e+03&$\texttt{162bit}$ & 1e+03&$\texttt{176bit}$ & 3e+03&$\texttt{specular}$ & 3e+08\\
\hline
$\texttt{lp\_25fv47}$ & 3e+03 &$\texttt{lp\_bnl1}$ & 3e+03&$\texttt{lp\_bnl2}$ & 8e+03&$\texttt{lp\_cre\_a}$ & 2e+04&$\texttt{lp\_cre\_c}$ & 2e+04&$\texttt{lp\_cycle}$ & 1e+07\\ 
$\texttt{lp\_czprob}$ & 9e+03 &$\texttt{lp\_d2q06c}$ & 1e+05&$\texttt{lp\_d6cube}$ & 1e+03&$\texttt{lp\_degen3}$ & 8e+02&$\texttt{lp\_fffff800}$ & 1e+10&$\texttt{lp\_finnis}$ & 1e+03\\ 
$\texttt{lp\_fit1d}$ & 5e+03 &$\texttt{lp\_fit1p}$ & 7e+03&$\texttt{lp\_ganges}$ & 2e+04&$\texttt{lp\_gfrd\_pnc}$ & 9e+04&$\texttt{lp\_greenbea}$ & 4e+03&$\texttt{lp\_greenbeb}$ & 4e+03\\ 
$\texttt{lp\_ken\_07}$ & 1e+02 &$\texttt{lp\_maros}$ & 2e+06&$\texttt{lp\_maros\_r7}$ & 2&$\texttt{lp\_modszk1}$ & 4e+01&$\texttt{lp\_pds\_02}$ & 4e+01&$\texttt{lp\_perold}$ & 5e+05\\ 
$\texttt{lp\_pilot}$ & 3e+03 &$\texttt{lp\_pilot4}$ & 4e+05&$\texttt{lp\_pilot87}$ & 8e+03&$\texttt{lp\_pilot\_ja}$ & 3e+08&$\texttt{lp\_pilot\_we}$ & 5e+05&$\texttt{lp\_pilotnov}$ & 4e+09\\ 
$\texttt{lp\_qap12}$ & 3 &$\texttt{lp\_qap8}$ & 3&$\texttt{lp\_scfxm2}$ & 2e+04&$\texttt{lp\_scfxm3}$ & 2e+04&$\texttt{lp\_scrs8}$ & 9e+04&$\texttt{lp\_scsd6}$ & 9e+01\\ 
$\texttt{lp\_scsd8}$ & 1e+03 &$\texttt{lp\_sctap2}$ & 2e+02&$\texttt{lp\_sctap3}$ & 2e+02&$\texttt{lp\_shell}$ & 4e+01&$\texttt{lp\_ship04l}$ & 1e+02&$\texttt{lp\_ship04s}$ & 1e+02\\
   \hline
 \end{tabular}
 \hss}
\end{table}

\clearpage

	
\bibliographystyle{siamplain}
\bibliography{myrefs}
\end{document}